\newtheorem{theorem}{Theorem}[section]
\newtheorem{lemma}[theorem]{Lemma}
\newtheorem{proposition}[theorem]{Proposition}
\newtheorem{corollary}[theorem]{Corollary}
\newtheorem{THM}{Theorem}
\theoremstyle{definition}
\newtheorem{remark}[theorem]{Remark}
\newtheorem{definition}[theorem]{Definition}
\def\beq{\begin{eqnarray*}}
\def\eeq{\end{eqnarray*}}
\def\R{\mathbb{R}}
\def\Z{\mathbb{Z}}
\def\N{\mathbb{N}}
\def\DD{\mathcal{D}}
\def\incl{\hookrightarrow}
\def\to{\rightarrow}
\def\tH{\widetilde{H}}
\def\dim{\mathrm{dim}\>}
\def\x{\times}
\def\d{\partial}
\def\phi{\varphi}
\def\Emb{\mathrm{Emb}}
\def\K{\mathcal{K}}
\def\L{\mathcal{L}}
\def\tL{\overset{\circ}{\mathcal{L}}}
\def\V{\mathcal{V}}
\def\W{\mathcal{W}}
\def\CD{\mathcal{CD}}
\def\TD{\mathcal{TD}}
\def\Aut{\mathrm{Aut}}
\def\im{\mathrm{im}}
\def\NN{\langle N \rangle}
\def\SS{\mathfrak{S}}
\def\co{\colon\thinspace}
    \title{The Milnor triple linking number of string links by cut-and-paste topology}
    \author{Robin Koytcheff}
    \email{rmjk@uvic.ca}
    \keywords{triple linking number, configuration space integrals, Pontrjagin--Thom construction, gluing, degree, string links, finite-type link invariants}
    \thanks{This work was supported by NSF grant DMS--1004610.}
    \address{Mathematics \& Statistics, University of Victoria, 
    Victoria, BC, Canada}
\begin{document}

\begin{abstract}
Bott and Taubes constructed knot invariants by integrating differential forms along the fiber of a bundle over the space of knots, generalizing the Gauss linking integral.  Their techniques were later used to construct real cohomology classes in 
spaces of knots and links in higher-dimensional Euclidean spaces.  In previous work, we constructed cohomology classes in knot spaces with arbitrary coefficients by integrating via a Pontrjagin--Thom construction.  We carry out a similar construction over the space of string links, but with a refinement in which configuration spaces are glued together according to the combinatorics of weight systems.
This gluing is somewhat similar to work of Kuperberg and Thurston.  We use a formula of Mellor for weight systems of Milnor invariants, and we thus recover the Milnor triple linking number for string links, which is in some sense the simplest interesting example of a class obtained by this gluing refinement of our previous methods.  Along the way, we find a description of this triple linking number as a degree of a map from the 6--sphere to a quotient of the product of three 2--spheres.
\end{abstract}

\maketitle

\section{Introduction}
Let $\L_k$ be the space of $k$-component string links, ie, the space of embeddings $f\co \coprod_k \R \incl \R^3$ which agree with a fixed linear embedding outside of $\coprod_k [-1,1] \subset \coprod_k \R$.  This paper concerns Milnor's triple linking number \cite{MilnorLinkGroups, MilnorIsotopy} for string links, which is a function from isotopy classes of string links to the integers.  Since $H_0(\L_k)$ is generated by isotopy classes of string links, we can view this link invariant as a cohomology class in $H^0(\L_3)$.  

We build on our previous work \cite{Rbo}.  This work was inspired by the configuration space integrals of Bott and Taubes \cite{Bott-Taubes} and subsequent work based on their methods, which produced real cohomology classes in spaces of knots.  In our work, we replaced integration of differential forms by a Pontrjagin--Thom construction.  This produced cohomology classes with \emph{arbitrary} coefficients.  It is not too difficult to generalize configuration space integrals or the homotopy-theoretic construction of our previous work to spaces of links and string links.  Perhaps less obvious is a refinement of this construction in which configuration spaces are glued along their boundaries.  Here we show that via this refinement, we recover the Milnor triple linking number for string links.  This invariant is already known to be integer-valued, but generalizations of this work should have more novel implications.  

We conjecture that this gluing refinement of our ``homotopy-theoretic Bott--Taubes integrals" produces integral multiples of all the Bott--Taubes/Vassiliev-type cohomology classes of Cattaneo, Cotta-Ramusino and Longoni \cite{Cattaneo}, showing that these classes are rational.  
This work is currently in progress.
In this paper, we will focus only on the specific but interesting example of the Milnor triple linking number, rather than all finite-type invariants or all the cohomology classes of Cattaneo et al.  

As our title suggests, this gluing is inspired by and similar to a construction of Kuperberg and Thurston  \cite{KT} (see also the paper of Lescop \cite{LescopKT}).  This gluing idea was also present in the work of Bott and Taubes \cite[Equation 1.18]{Bott-Taubes}, as once pointed out to the author by Habegger.  The difference between our construction and that of Kuperberg and Thurston is that we do a gluing which is specific to the weight system for the invariant.  Thus, in our work in progress, we generalize this to arbitrary cohomology classes by constructing one glued-up space for each weight system (or graph cocycle), rather than a glued-up space that accounts for all weight systems.  This slightly different approach is necessary to produce bundles whose fibers are nice enough to admit neat embeddings and Pontrjagin--Thom constructions.

\subsection{The Gauss linking integral}
\label{GaussIntegral}
Before stating our main results, we describe the analogue of our constructions in the simpler case of the pairwise linking number.  We first discuss in detail this pairwise linking number in terms of a Pontrjagin--Thom construction and the space of links in the case of \emph{closed} links, where this invariant is completely straightforward.  

For any space $X$, denote the configuration space of $q$ points in $X$ by 
\[C_q(X):=\{(x_1,...,x_q)\in X^q | x_i \neq x_j \forall i \neq j\}.\]
An inclusion $\xymatrix{ X \ar@{^(->}[r]^f & Y}$ induces an inclusion of configuration spaces $\xymatrix{C_q(X) \ar[r]^f & C_q(Y).}$

The linking number of $L \co S^1 \sqcup S^1 \to \R^3$ is then given by the degree of the composition
\begin{equation}
\label{GaussMap}
\xymatrix{
S^1 \x S^1 \ar@{^(->}[r] & C_2(S^1 \sqcup S^1) \ar[r]^-L & C_2(\R^3) \ar[r]^-{\phi_{12}} & S^2} 
\end{equation}
\[
\xymatrix{ & & &\>\>\>\>\>\>\>\>\>\>\>\>\>\>\>\>\>\>\>\>\>\>\>\>\>\>\>\>\>\>\>\>  (x_1,x_2) \ar@{|->}[r] & \frac{x_2 - x_1}{|x_2 - x_1|}}
\]
where $S^1\x S^1$ is one of the two (isomorphic) components of $C_2(S^1 \x S^1)$ where the two points are on distinct circles.  Here the 
rightmost 
map $\phi_{12}$ happens to be a homotopy equivalence.  One way of realizing this degree is to pull back a volume form on $S^2$ to a 2--form $\theta$ and integrate $\theta$ over $S^1 \x S^1$.

Another way of realizing this integer would be to start by embedding $S^1 \x S^1$ into some Euclidean space $\R^N$.  The Pontrjagin--Thom collapse map then gives a map from $S^N$ to the Thom space $(S^1 \x S^1)^\nu$ of the normal bundle $\nu$ of the embedding:
\[
\xymatrix{ S^N \ar[r]^-\tau & (S^1 \x S^1)^\nu}
\]
The normal bundle $\nu$ has fiber dimension $N-2$, so using the Thom isomorphism we have in integral cohomology
\[
\xymatrix{
H^2(S^1 \x S^1) \ar[r]^-{\mbox{Th }\cong} & H^N((S^1 \x S^1)^\nu) \ar[r]^-{\tau^*} & H^N(S^N) \cong \Z.
}
\]
If we again let $\theta$ denote the pullback to $S^1 \x S^1$ of a volume form on $S^2$ via (\ref{GaussMap}), the image of the cohomology class $[\theta]$ under the above composition is the linking number. 

To describe this as a function on the space of links, let $\overset{\circ}{\L}_k$ denote the space $\Emb(\coprod_k S^1, \R^3)$ of $k$-component closed links (as opposed to string links).  We can rewrite (\ref{GaussMap}) as
\begin{equation}
\label{GaussMapLinkSpace}
\xymatrix{
\overset{\circ}{\L}_2 \x S^1 \x S^1 \ar@{^(->}[r] & \overset{\circ}{\L}_2 \x C_2(S^1 \sqcup S^1) \ar[r] & C_2(\R^3) \ar[r] & S^2. 
}
\end{equation}
Then we can pull back a volume form on $S^2$ (or generator of $H^2(S^2)$) to a 2--form $\theta$ on $\tL_2 \x S^1 \x S^1$ (or a cohomology class $[\theta]$).  The latter space is a trivial $(S^1\x S^1)$-bundle over $\tL_2$, and integrating $\theta$ along the fiber gives a function on $\tL_2$, which is the linking number.  

This integration along the fiber can also be described via a Pontrjagin--Thom construction as follows.  We embed the bundle into a trivial $\R^N$-bundle
\begin{equation}
\label{EmbeddingGaussLinkingNumber}
\tL_2 \x S^1 \x S^1 \incl \tL_2 \x \R^N
\end{equation}
and collapse a complement of the tubular neighborhood of the embedding to get a map
\begin{equation}
\label{ThomCollapseGaussLinkingNumber}
\Sigma^N (\tL_2)_+ \to (\tL_2 \x S^1 \x S^1)^\nu
\end{equation}
where the left-hand side is the suspension of $\tL_2$ with a disjoint basepoint, and where the right-hand side is the Thom space of the normal bundle $\nu$ of the embedding.  Using the Thom isomorphism and suspension isomorphism, we have in cohomology
\[
\xymatrix{
H^2(\tL_2 \x S^1 \x S^1) \ar[r]^-{\mbox{Th }\cong} & H^N((\tL_2 \x S^1 \x S^1)^\nu) \ar[r] & H^N(\Sigma^N \tL_2) \ar[r]^-{\mbox{susp }\cong} & H^0(\L_2)
}
\]
The image of $[\theta]$ under this composite is the linking number.  Those familiar with 
spectra in 
stable homotopy theory will realize that the maps (\ref{ThomCollapseGaussLinkingNumber}) for all $N$ can be 
more concisely 
rewritten as a map from a suspension spectrum to a Thom spectrum.  This was the perspective taken in 
our previous work
 \cite{Rbo}. 

\subsection{Gauss linking integral for string links}
\label{GaussIntegralStringLinks}
In this paper we consider string links because the Milnor triple linking number is a well defined integer only for string links.  (For closed links, it is only defined modulo the gcd of the pairwise linking numbers.)

\begin{definition}
\label{StringLinkDef}
Fix a positive real number $R$.  We define a \emph{string link with $k$ components} as a smooth embedding $L\co \coprod_k \R \incl \R^3$ such that 
\begin{itemize}
\item
$L$ takes $\coprod_k [-R,R]$ into $B_R \subset \R^3$, a ball of radius $R$ around the origin 
\item
outside of $\coprod_k [-R,R]$, $L$ agrees with $2k$ fixed linear maps $f_i^-\co (-\infty, -R] \to \R^3$, $f_i^+\co [R,\infty) \to \R^3$, $i=1,...,k$  
\item
the directions ${v}^{\pm}_1,...,{v}_k^{\pm}$ of the fixed linear maps $f_1^\pm, ...,f_k^\pm$ are distinct vectors in $S^2$.  
\end{itemize}
For definiteness, we choose ${v}^{\pm}_i$ to be the unit vector in the direction $(\frac{k+1}{2} - i, \pm 1, 0)$.  Also note that the orientation of each copy of $\R$ gives an orientation of a string link.
 \end{definition}



\begin{figure}[h]
\begin{center}
\includegraphics[height=6pc]{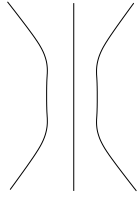}
\end{center}
\caption{An unlink in the setting of (three-component) string links.}
\end{figure}



Now to treat the simplest example in the string link setting, 
consider the linking number of a two-component string link.  
We start by replacing 
(\ref{GaussMap}) by 
\begin{equation}
\label{GaussMapStringLink}
\xymatrix{
 \R \x \R \ar@{^(->}[r] & C_2(\R \sqcup \R) \ar[r] & C_2(\R^3) \ar[r]^-{\phi_{12}} & S^2. 
}
\end{equation}
If we compactified $\R\x \R$ to a square disk, the map to $S^2$ would be ill defined at (some of) the corners, taking different constant values on adjacent edges.  
However, we can keep track of the relative rates of approach to infinity of the two points.  Such a compactification yields an octagonal disk $\octagon$; it is just the blowup of the square at the four corners.  
Thus we have
\begin{equation}
\label{GaussMapStringLinkCptfd}
\xymatrix{\octagon \ar@{^(->}[r] & C_2[\R^3] \ar[r]^-{\phi_{12}} & S^2} 
\end{equation}
where $C_2[\R^3]$ is the Axelrod--Singer compactification of $C_2(\R^3)$.  (The reader unfamiliar with this intermediate space may ignore it for now and wait until it is reviewed in Section \ref{Background}).

The map $\phi_{12}\co \octagon \to S^2$ takes the boundary of $\octagon$ into an arc $A$ on a great circle in $S^2$. 
Namely, 
by our choices of ${v}_i^\pm$, $A$ is such an arc contained in the set $\{z=0, x\geq 0\}$.  We can find a 2--form $\alpha$ on $S^2$ which vanishes on $A$ but is cohomologous to a unit volume form.  We can then pull it back to a 2--form $\theta:=\phi_{12}^*\alpha$ on $\octagon$ and integrate $\theta$ over $\octagon$.\footnote{Since $\theta$ is 0 on the boundary of $\octagon$, one can show (using Stokes' Theorem for integration along the fiber of the bundle $\L_2 \x \octagon \to \L_2$) that this integral is an isotopy invariant.  Cf. equation (\ref{Stokes}) in Section \ref{ConfigSpaceIntegrals}.}  More topologically, we have maps 
\[
\xymatrix{
S^2 \cong \octagon / \d \octagon \ar[r]^-{\phi_{12}} & S^2 / A  
 & S^2 \ar@{->>}[l]_-{\mathrm{quotient}}
}
\]
where $[\alpha] \in H^2(S^2/A;\Z)$ is a class that maps to a generator of $H^2(S^2;\Z)$.  The pairing of $[\theta]$ with the fundamental class $[\octagon, \d \octagon]$ gives 
a number $\ell_\alpha$, which is the pairwise linking number (at least up to a sign, depending on conventions and choices of orientations).  
We can take $\alpha$ to be supported in a (small) disk in $S^2$ and quotient by the complement of this support:
\[
\xymatrix{S^2/A 
\ar[r]^-{q(\alpha)} & S^2}
\]
Then one can check that the degree of the composition $q(\alpha)\circ \phi_{12}$ is our linking number $\ell_\alpha$.
In this case any choice of such an $\alpha$ will yield the same $\ell_\alpha$ (since $S^2 \setminus A$ is connected), though we will see that for the triple linking number, the analogous choice \emph{does} matter. 

Moving towards a Pontrjagin--Thom map with the space of string links, we parametrize the map (\ref{GaussMapStringLinkCptfd}) as in (\ref{GaussMapLinkSpace}) and pull back $[\alpha]$ as above to a class $[\theta]$ on $\L_2 \x \octagon$.  We can embed 
\[
\xymatrix{\L_2 \x \octagon \ar@{^(->}[r] & \L_2 \x \R^M \x[0,\infty)^N}
\]
in a way that preserves the corner structure of the octagonal disk $\octagon$.  
This embedding has a  tubular neighborhood  diffeomorphic to the normal bundle $\nu$.  Collapsing not only the complement of the tubular neighborhood, but also the boundary of $\R^M \x[0,\infty)^N$, gives a collapse map to a quotient of Thom spaces:
\[
\xymatrix{\Sigma^{M+N} \L_2 \ar[r] & (\L_2 \x \octagon)^\nu / (\L_2 \x \d \octagon)^\nu}
\]
The induced map in cohomology, together with the relative Thom isomorphism and suspension isomorphism, takes the class $[\theta]$ to the pairwise linking number.
 \\

\subsection{Main Results}

One main motivation for this paper was the question of exactly which knot and link invariants can be produced via the Pontrjagin--Thom construction of our previous work \cite{Rbo}, or a modification of it.  In this paper we begin to answer that question.  We glue four different configuration space bundles over the space $\L_3$ of three-component string links to obtain one bundle $F_g \to E_g\to \L_3$ whose fiber is a (6--dimensional) gluing of the four configuration space fibers.  Along the way to proving a statement about our Pontrjagin--Thom construction, we are led to the following theorem.

\begin{THM}
\label{DegreeThm}
The Milnor triple linking number for string links can be expressed as the pairing of a 6--dimensional cohomology class $[\beta]$ with the fundamental class of $S^6$.  This class $[\beta]$ is defined via maps
\[
\xymatrix{ S^6 \ar[r]^-\Phi & S^2 \x S^2 \x S^2 / \DD & S^2 \x S^2 \x S^2 \ar@{->>}[l]}
\]
where $\DD$ is a union of positive-codimension subsets of $S^2\x S^2 \x S^2$, where the right-hand map is the quotient, and where the left-hand map is induced by the link.  Specifically, $[\beta]$ is the pullback via the left-hand map of a certain class $[\alpha]\in H^6(S^2\x S^2 \x S^2 / \DD; \Z)$ which maps to a generator of $H^6(S^2 \x S^2 \x S^2; \Z)$ via the right-hand map.
\end{THM}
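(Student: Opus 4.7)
The plan is to realize $S^6$ as the glued configuration-space fiber $F_g$ of the bundle $E_g\to\L_3$ promised in the introduction, assemble the three Gauss-type direction maps on $F_g$ into a single map $\Phi\co F_g\to(S^2\x S^2\x S^2)/\DD$, and identify a preferred top class $[\alpha]\in H^6((S^2)^3/\DD;\Z)$ whose pullback $[\beta]=\Phi^*[\alpha]$ pairs with $[S^6]$ to give $\mu_{123}(L)$. The strategy mirrors Section \ref{GaussIntegralStringLinks}, where the pairwise linking number was expressed via the compactified Gauss map $\octagon\to C_2[\R^3]\to S^2$, but now the ``octagon'' is replaced by a 6-dimensional space built from four configuration-space pieces.

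First I would spell out the four fibers to be glued, following Mellor's formula for the weight system of $\mu_{123}$. There is one $6$-dimensional fiber for each of four trivalent graphs on three strands contributing to $\mu_{123}$: a principal ``Y''-graph fiber parametrizing one point on each of the three strands together with one internal point in $\R^3$ (giving $3+3=6$ dimensions), and three boundary-type fibers associated with degenerations of the internal vertex onto each of the strands in turn. Each fiber is an Axelrod--Singer-style compactification of the relevant ordered configuration space, and on each fiber the three pairwise unit-vector maps, defined analogously to (\ref{GaussMapStringLinkCptfd}), assemble into a map $\phi=(\phi_{12},\phi_{13},\phi_{23})\co F_i\to(S^2)^3$.

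Second, I would carry out the gluing. Each fiber has codimension-one boundary strata of three types: collisions between two configuration points on the strands, collisions of the internal point with a strand point, or configuration points escaping to $\pm\infty$ along one of the prescribed ends $v_i^\pm$ of Definition \ref{StringLinkDef}. Using the STU-like relations implicit in Mellor's formula, the internal-vertex-collision faces of the principal fiber pair up (with opposite induced boundary orientations) with faces of the three boundary fibers; remaining faces either cancel among themselves via strand-swap symmetries or shrink because the relevant direction map becomes constant. The outcome is a closed oriented $6$-manifold $F_g$, which a Mayer--Vietoris calculation (or direct identification using the corner structure) shows is homeomorphic to $S^6$. On this glued space the three direction maps $\phi_{ij}$ glue only up to values on the ``escape-to-infinity'' faces, where $\phi_{ij}$ is forced to lie on the fixed vectors $v_i^\pm,v_j^\pm$. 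Collecting these loci gives a positive-codimension subset $\DD\subset (S^2)^3$ (a union of translates of $\{v_i^\pm\}\x S^2\x S^2$ and its index-permutations), modulo which $\phi$ descends to a well-defined continuous map $\Phi\co F_g\cong S^6\to(S^2)^3/\DD$.

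Finally, let $[\alpha]\in H^6((S^2)^3/\DD;\Z)$ be the class of a product of three bump $2$--forms, each supported in a small disk of $S^2$ disjoint from the corresponding image arcs of $\DD$; equivalently, $[\alpha]$ is Poincar\'e dual to a generic point of $(S^2)^3\setminus\DD$. By construction $[\alpha]$ lifts a generator of $H^6((S^2)^3;\Z)$, and the pairing $\la\Phi^*[\alpha],[F_g]\ra$ is the signed count of preimages of that generic point, which is precisely the sum of configuration-space integrals computing $\mu_{123}(L)$ in Mellor's formula. The hard step is the gluing bookkeeping in the middle paragraph: checking that the codim-1 faces of the four $6$-dimensional fibers cancel in matching pairs with compatible orientations and that the accumulated ``constant-direction'' loci collapse into a subset $\DD$ thin enough for an integer class $[\alpha]$ to exist. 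This is exactly the ``gluing refinement'' advertised in the introduction, and is the integer-valued analogue of the Bott--Taubes principal-face-vanishing argument used to prove invariance of configuration space integrals.
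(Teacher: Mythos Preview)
Your overall strategy matches the paper's, but the central step contains a genuine error: the glued fiber $F_g$ is \emph{not} a closed manifold, and neither it nor $F_g/\partial F_g$ is homeomorphic to $S^6$.

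First, the three non-$T$ fibers coming from Mellor's formula are the chord-diagram fibers $F_L=F[2,1,1;0]$, $F_M=F[1,2,1;0]$, $F_R=F[1,1,2;0]$, each of which is only \emph{four}-dimensional (four points constrained to the strands). To glue them to the six-dimensional $F_T=F[1,1,1;1]$ one first thickens each by an extra $S^2$ factor, rewriting $I_D=\int_{F_D\times S^2}\theta\,\omega$. The gluing then identifies each principal face of $F_T$ (free vertex colliding with a strand vertex) with the unique principal face of the corresponding $F_D\times S^2$. This eliminates only the principal faces; the \emph{infinite} faces---where configuration points escape along the fixed ends---remain as $\partial F_g$. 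They are not cancelled by any symmetry or further gluing; rather, the Gauss-type map $\Phi$ sends them into a positive-codimension subset $\DD\subset(S^2)^3$, so that $\Phi$ descends only to a map $F_g/\partial F_g\to(S^2)^3/\DD$.

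Even after collapsing the boundary one does not get $S^6$. A direct identification gives $F_g\cong I^3\times\bigl(D^3\setminus\{3\,D^3\text{'s}\}\bigr)$, so $F_g/\partial F_g$ is $S^6$ with a wedge $\bigvee_1^3 S^3$ still inside; the theorem's $S^6$ arises via a further quotient map $S^6\twoheadrightarrow F_g/\partial F_g$ collapsing that wedge, which induces an isomorphism on $H_6$ and $H^6$. The pairing is between $[\beta]$ and $[S^6]$ through this auxiliary quotient, not with a fundamental class of $F_g$ itself. Finally, your description of $\DD$ is too thin: faces where the free vertex of $T$ escapes to infinity contribute products of hemispheres (not just points times spheres), so $(S^2)^3\setminus\DD$ is disconnected, and one must choose the support of $\alpha$ in the correct component to get $\mu_{123}$ on the nose rather than $\mu_{123}$ plus a pairwise linking number.
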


In this Theorem, $S^6$ is related to the glued fiber $F_g$ mentioned above.  This glued fiber modulo its boundary is not exactly $S^6$, but there is a quotient map $S^6 \to F_g/\d F_g$ which induces an isomorphism in top (co)homology.  The map $\Phi$ is induced by a map $F_g \to S^2 \x S^2 \x S^2$ which is a generalization of the Gauss map $\phi_{12}$ for links.  (It sends a configuration to unit vector differences between three pairs of points in the configuration.)  The codimension--1 subspace $\DD$ is thus the image of $\d F_g$.  
This boundary $\d F_g$ of the glued fiber consists entirely of configurations where points approach infinity.

We will see that $\DD$ is contained in a larger set $\widetilde{\DD}$ (see Proposition \ref{DegenerateLocus}) which is the union of a 6--ball and a codimension--1 set.  This set $\widetilde{\DD}$ separates $S^2 \x S^2 \x S^2$ into two components.
We will see that we need to choose $[\alpha]$ to have support in a certain one of these two components (see Proposition \ref{OnTheNose}).  However, the support of $[\alpha]$ may be taken to be arbitrarily small.
Thus in the Theorem above, we could replace $\DD$ by $\widetilde{\DD}$ or even by the complement of an open set contained in one of these two components.
Take the quotient $q(\alpha)\co S^2\x S^2 \x S^2 \to S(\alpha)$ by such a complement of an open set.  The resulting quotient space $S(\alpha)$ is then a 6-sphere.  So we can express the triple linking number as the \emph{degree} of $q(\alpha) \circ \Phi$.

Returning to our Pontrjagin--Thom construction, we show (in Lemma \ref{GluedEmbedsInTrivial}) that we can embed the bundle $E_g \to \L_3$ into a trivial fiber bundle  
\begin{equation}
\label{EmbedInTrivialBundle}
\xymatrix{
E_g  \ar@{^(->}[r] & B_{M,N}(R) \x \L_3
}
\end{equation}
in a way that preserves the corner structure.  The fiber of this trivial bundle, called $B_{M,N}(R)$, is a codimension--0 subset of a ball of radius $R$ in $\R^{M+N}$, where the number $N$ is related to the codimension of the corners in $E_g$.  The space $B_{M,N}(R)$ is homeomorphic to a ball, but it is not quite a manifold with corners.  Nonetheless, this embedding has a tubular neighborhood $\eta(E_g)$ diffeomorphic to the normal bundle, which is enough for a Pontrjagin--Thom map 
\[
\tau\co (B_{M,N}(R) \x \L_3, \d(B_{M,N}(R)) \x \L_3) \to (B_{M,N}(R) \x \L_3, \> (B_{M,N}(R) \x \L_3 - \eta(E_g)) \cup \d E_g ).
\]
Below, we rewrite this as a map of quotients rather than pairs.  We let $E_g^{\nu_{M,N}}$ denote the Thom space of the normal bundle $\nu_{M,N}\to E_g$ to the embedding (\ref{EmbedInTrivialBundle}).
\begin{THM}
\label{PTThm}
Let $[\beta]$ be a cohomology class as in Theorem 1 above.  The Pontrjagin--Thom collapse map 
\[
\tau\co \Sigma^{M+N} \L_3 \to E_g^{\nu_{M,N}} / \d E_g^{\nu_{M,N}}
\]
induces a map $\tau^*$ in cohomology such that $\tau^* [\beta] \in H^0(\L_3)$ is the Milnor triple linking number for string links.  The map is independent of the choice of $M$ or $N$, provided both are sufficiently large.  Equivalently, in the language of stable homotopy theory, the collapse map 
\[
\tau\co \Sigma^{\infty+6} \L_3 \to E_g^\nu / \d E_g ^\nu
\]
from the suspension spectrum of $\L_3$ to the Thom spectrum of the stable normal bundle $\nu \to E_g$ (modulo its boundary) induces a map $\tau^*$ in cohomology such that $\tau^*[\beta]$ is the triple linking number for string links.  (The ``$\infty +6$" superscript reflects the fact that this map sends the $n^\mathrm{th}$ space of the suspension spectrum to the Thom space of the normal bundle whose fiber dimension is $n-6$, where 6 is the fiber dimension of the bundle $E_g\to \L_3$.)
\end{THM}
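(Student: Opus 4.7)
The strategy is to identify the induced map $\tau^*$ with integration along the fiber of the bundle $(E_g, \d E_g) \to \L_3$, and then to apply Theorem 1 fiberwise. Since by Theorem 1 the class $[\beta]$ restricted to any fiber pairs to the triple linking number against the top relative class, fiber integration should output the triple linking number as an element of $H^0(\L_3)$.

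First, I would set up the Thom and suspension isomorphisms appropriate to the corner structure. By Lemma \ref{GluedEmbedsInTrivial}, the embedding (\ref{EmbedInTrivialBundle}) has a tubular neighborhood $\eta(E_g)$ diffeomorphic to the normal bundle $\nu_{M,N}$. Collapsing both the complement of $\eta(E_g)$ and the boundary $\d(B_{M,N}(R)) \x \L_3$ produces on the source $\Sigma^{M+N} \L_3$ and on the target the quotient Thom space $E_g^{\nu_{M,N}}/\d E_g^{\nu_{M,N}}$. The relative Thom isomorphism then reads
\[
H^{M+N}\bigl(E_g^{\nu_{M,N}}/\d E_g^{\nu_{M,N}}\bigr) \;\cong\; H^6(E_g, \d E_g),
\]
and the suspension isomorphism gives $H^{M+N}(\Sigma^{M+N} \L_3) \cong H^0(\L_3)$. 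A standard argument, stratifying by fibers, shows that under these identifications $\tau^*$ coincides with the Gysin (fiber integration) map for the bundle $(E_g,\d E_g) \to \L_3$ whose fibers are the 6-dimensional pairs $(F_g, \d F_g)$.

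Next, I would evaluate $\tau^*[\beta]$ fiberwise. The class $[\beta] \in H^6(E_g, \d E_g)$ is the pullback of $[\alpha] \in H^6((S^2)^3/\DD)$ along the parametrized Gauss-type map $\Phi\co E_g \to (S^2)^3$. Over a string link $L \in \L_3$, the restriction $[\beta]|_{F_g}$ is therefore $\Phi_L^*[\alpha]$. Using the degree-one quotient map $S^6 \to F_g/\d F_g$ noted after Theorem 1, Theorem 1 yields $\langle [\beta]|_{F_g}, [F_g, \d F_g]\rangle = \mu_{123}(L)$. Fiber integration thus produces the function $L \mapsto \mu_{123}(L)$ in $H^0(\L_3)$. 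For the independence of $M$ and $N$, I would observe that enlarging either corresponds to a stabilization of the embedding that commutes with the Thom and suspension isomorphisms, so $\tau^*[\beta]$ is unchanged; assembling the resulting compatible family of collapse maps gives the spectrum-level statement between $\Sigma^{\infty+6} \L_3$ and the Thom spectrum of the stable normal bundle.

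The main obstacle I anticipate is justifying that $\tau$ genuinely realizes fiber integration given that $B_{M,N}(R)$ is only homeomorphic (not diffeomorphic) to a ball with corners, and that $E_g$ is itself glued from four configuration-space pieces along portions of their boundaries. Concretely, one must check that $(F_g, \d F_g)$ is a Poincar\'e pair with a well-defined orientation class, that $\d F_g$ maps into $\DD$ under $\Phi$ so that $\Phi^*[\alpha]$ really is a relative class, and that the tubular neighborhood and collapse respect the corner stratification so that no boundary correction terms spoil the relative Thom isomorphism. Given the explicit description of $\d F_g$ as configurations with points escaping to infinity and the separation property of $\widetilde{\DD}$ noted in the discussion of Theorem 1, these verifications should be local and tractable, though somewhat delicate.
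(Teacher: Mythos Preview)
Your proposal is correct and follows essentially the same route as the paper: identify $\tau^*$ with fiber integration for the relative bundle $(E_g,\d E_g)\to \L_3$ (the paper does this via Lemma~\ref{IntegrationIsPTBoundary}, a Poincar\'e-duality/cap-product argument rather than ``stratifying by fibers''), then use the already established fact that $\int_{F_g}\beta = \langle[\beta],[F_g,\d F_g]\rangle = \mu_{123}$ (the paper cites Proposition~\ref{Mu123IsIntegralOnGluedSpace}, which is equivalent to your fiberwise invocation of Theorem~1). The one point you omit is that $\L_3$ is infinite-dimensional, so Lemma~\ref{IntegrationIsPTBoundary} does not literally apply; the paper handles this by restricting to arbitrary compact submanifolds of $\L_3$ and observing that $\tau^*[\beta]$ and $\int_{F_g}\beta$ agree over each of them.
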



\subsection{Organization of the paper}
The paper is organized as follows.  Section \ref{Background} contains a 
review of finite-type invariants, configuration space integrals, and Milnor invariants, especially for string links.
Experts may wish to skip this Section and refer back to it as necessary.

Section \ref{MilnorIntegrals} describes the triple linking number as a sum of configuration space integrals, using Mellor's formula for weight systems of Milnor invariants as well as results on configuration space integrals for homotopy string link invariants.  
We examine some known facts about how configuration space integrals combine to make an invariant, specialized to the example of the triple linking number;
this is crucial for determining the gluing construction.  

Section \ref{GluingMfds} starts with some general facts about manifolds with faces, which are a nice kind of manifolds with corners.  We then glue the appropriate bundles over the link space $\L_3$, and show that the glued space is a manifold with faces.

Section \ref{ProofThm1} proves Theorem 1, expressing the triple linking number as a degree.

Section \ref{IntegrationViaPT}  shows that integration along the fiber coincides with a certain Pontrjagin--Thom construction.  We have left this arguably foundational material to one of the last Sections because it is not needed for Theorem 1; in addition, some of the ideas of Section \ref{GluingMfds} are useful for making sense of Pontrjagin--Thom constructions for manifolds with corners.

Finally, Section \ref{MilnorViaPT} proves Theorem 2, expressing the triple linking number via a Pontrjagin--Thom construction.


\subsection{Acknowledgments}  The author  was supported by NSF grant DMS--1004610. 
He thanks Tom Goodwillie for illuminating conversations, suggestions, and corrections, and for listening to his ideas.  He thanks Ismar Voli\'{c} and Brian Munson for conversations and work
 that motivated this paper.  Conversations with Chris Kottke about blowups and a comment of the referee of our paper \cite{MunsonVolicHtpyLinks} were useful in resolving an issue related to defining configuration space integrals for string links.  The author thanks Slava Krushkal for a helpful conversation on Milnor invariants.  He thanks Dinakar Muthiah, Sam Molcho, and especially Danny Gillam for their interest in learning about configuration space integrals, which led to useful conversations.  He thanks the referee for many useful comments and suggestions.

\section{Background material}
\label{Background}

This Section reviews background material, including finite-type invariants, configuration space integrals, and Milnor invariants, all in the setting of string links.  The most subtle points are extending the definition of configuration space integrals for string links (section \ref{TechnicalForStringLinks}) and two different ways of orienting diagrams (much of Section \ref{FTTrivalent}).  The former point is also addressed in our recent work with Munson and Voli\'{c} \cite{MunsonVolicHtpyLinks}, while the latter is addressed in the AB Thesis of D Thurston \cite{Thurston} and his paper with Kuperberg \cite{KT}.

\subsection{Finite-type invariants}

Here we give a brief review of Vassiliev invariants (ie, finite-type invariants) of string links.  For an introduction to the subject, we recommend the paper of Bar-Natan \cite{BarNatan} or the book of Prasolov and Sossinsky \cite{PrasolovSossinsky}.  
We will often write ``link" to mean ``string link."  

We will consider finite-type invariants over the field $\R$ (or arguably over the ring $\Z$, in the case of the triple linking number).
We write $\V^k_m$ for the $\R$-vector space (or $\Z$--module) of type-$m$ invariants of $k$-component links.
%
%
For string links, one considers (\emph{degree} $m$) \emph{chord diagrams} on $k$ strands, as in Figure \ref{AChordDiagram}.  We let $\CD_m^k$ be the vector space of such diagrams.


\begin{figure}[h]
\begin{center}
\includegraphics[height=3pc]{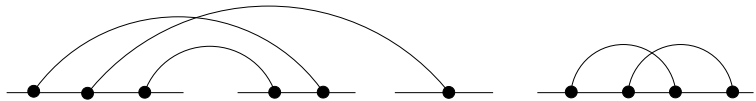}
\end{center}
\caption{A chord diagram in $\CD_5^4$.}
\label{AChordDiagram}
\end{figure}


As in the finite-type theory of knots, there is a canonical injection
\begin{equation}
\label{CanonWeightSystem}
\xymatrix{ w\co \V^k_m / \V^k_{m-1} \ar@{^(->}[r] & (\CD_m^k)^*}.
\end{equation}
Furthermore, any element $W$ in the image of $w$ satisfies the \emph{one-term relation} (1T) and the \emph{four-term relation} (4T).  (See Bar-Natan's paper \cite{BNHtpyLinks} for these relations in the string link setting.)


Call an element in  $(\CD_m^k)^*$ satisfying these relations a 
\emph{weight system}, and let $\W_m^k$ be the subspace of $(\CD_m^k)^*$ of all weight systems.  
The main theorem in the theory of finite-type invariants is that the linear injection $w$ in (\ref{CanonWeightSystem}) is in fact an isomorphism of $\R$-vector spaces:
\begin{equation}
\label{FiniteTypeIso}
\xymatrix{
\V^k_m / \V^k_{m-1} \ar[r]_-w & \W^k_m \ar@/_1.5pc/[l]
}
\end{equation}

\subsection{Finite-type invariants and trivalent diagrams}
\label{FTTrivalent}
One way of constructing the inverse map $\W^k_m \to \V^k_m/\V^k_{m-1}$ 
is via configuration space integrals.  This approach requires enlarging
 $\CD_m^k$ to 
a space of trivalent diagrams $\TD_m^k$.  
(See \cite{MunsonVolicHtpyLinks} for a thorough treatment of this space.)
We note some important conventions below.

\begin{figure}[h]
\begin{center}
\includegraphics[height=5pc]{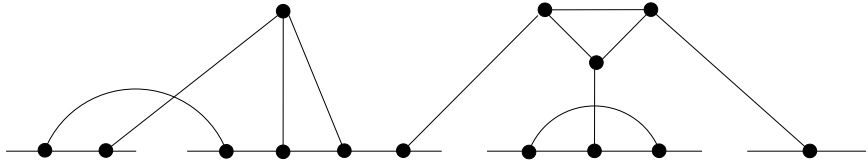} \\
\end{center}
\caption{A trivalent diagram in $\TD_7^4$.  The horizontal lines are the strands.  The Lie orientation is determined by this drawing of the diagram in the plane.}
\end{figure}

Vertices on the strands are called \emph{interval vertices}, and the remaining vertices are called \emph{free vertices}.  
We call an edge between two interval vertices a \emph{chord}.  These diagrams are equipped with a \emph{Lie orientation}.  That means that at every vertex we have a cyclic order of the three emanating edge-ends (say, determined by a planar embedding), and in $\TD_m^k$, the effect of reversing the cyclic order at one vertex is to multiply the diagram by $-1$.  We put a canonical cyclic order at each interval vertex by drawing all chords \emph{above} the strands, so there is a well defined inclusion $\CD_m^k \subset \TD_m^k$.  This Lie orientation is equivalent to one determined by an ordering of the vertices and an orientation on every edge, where changing the vertex-ordering by an odd permutation multiplies the diagram by $-1$, as does reversing the orientation on one edge.  (See the work of D Thurston \cite[Appendix B]{Thurston} or his work with Kuperberg \cite[Section 3.1]{KT}.)

If we impose the STU relation below, then every trivalent diagram can be expressed as a sum of chord diagrams.  
\begin{equation}
\label{UnlabeledSTU}
\raisebox{-3pc}{\includegraphics[height=6pc]{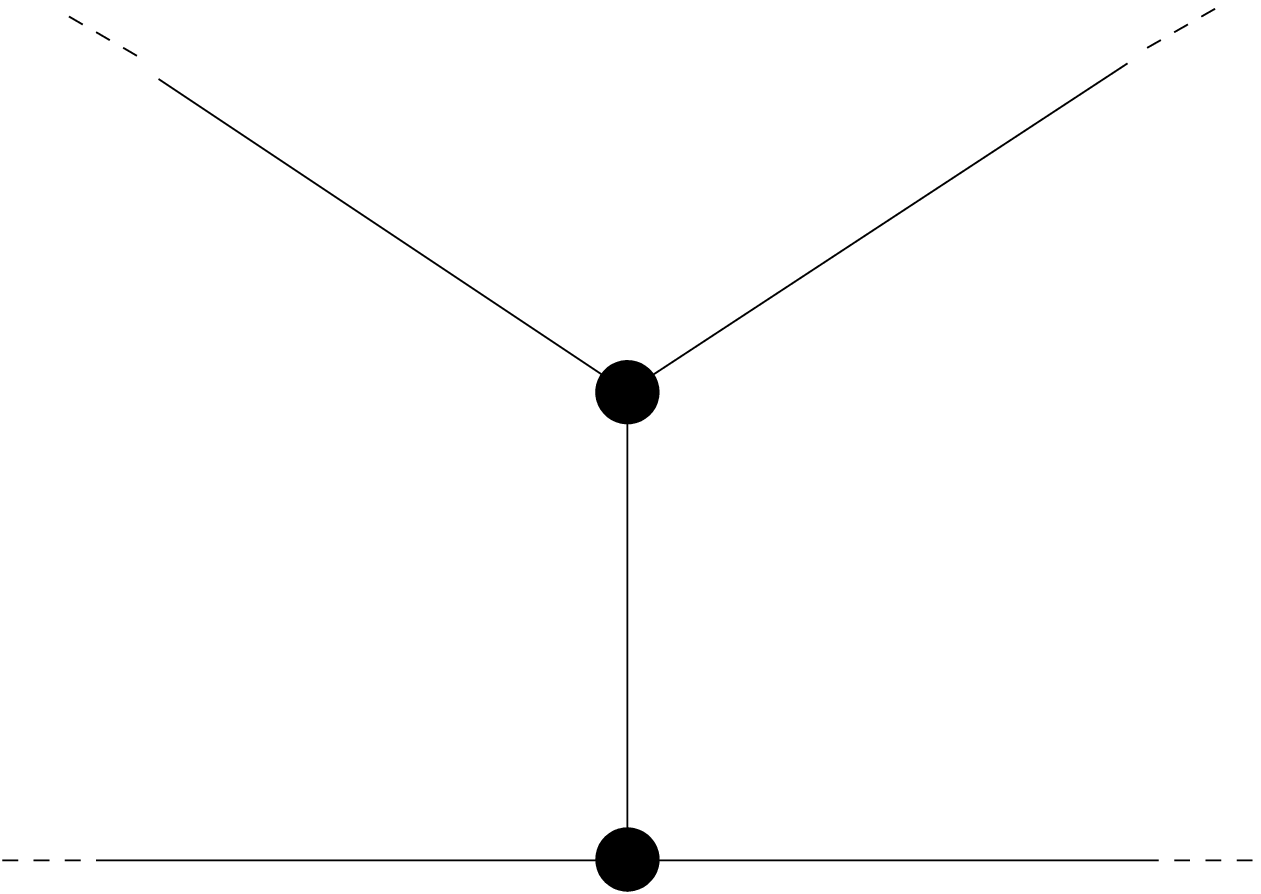}} =
\raisebox{-3pc}{\includegraphics[height=6pc]{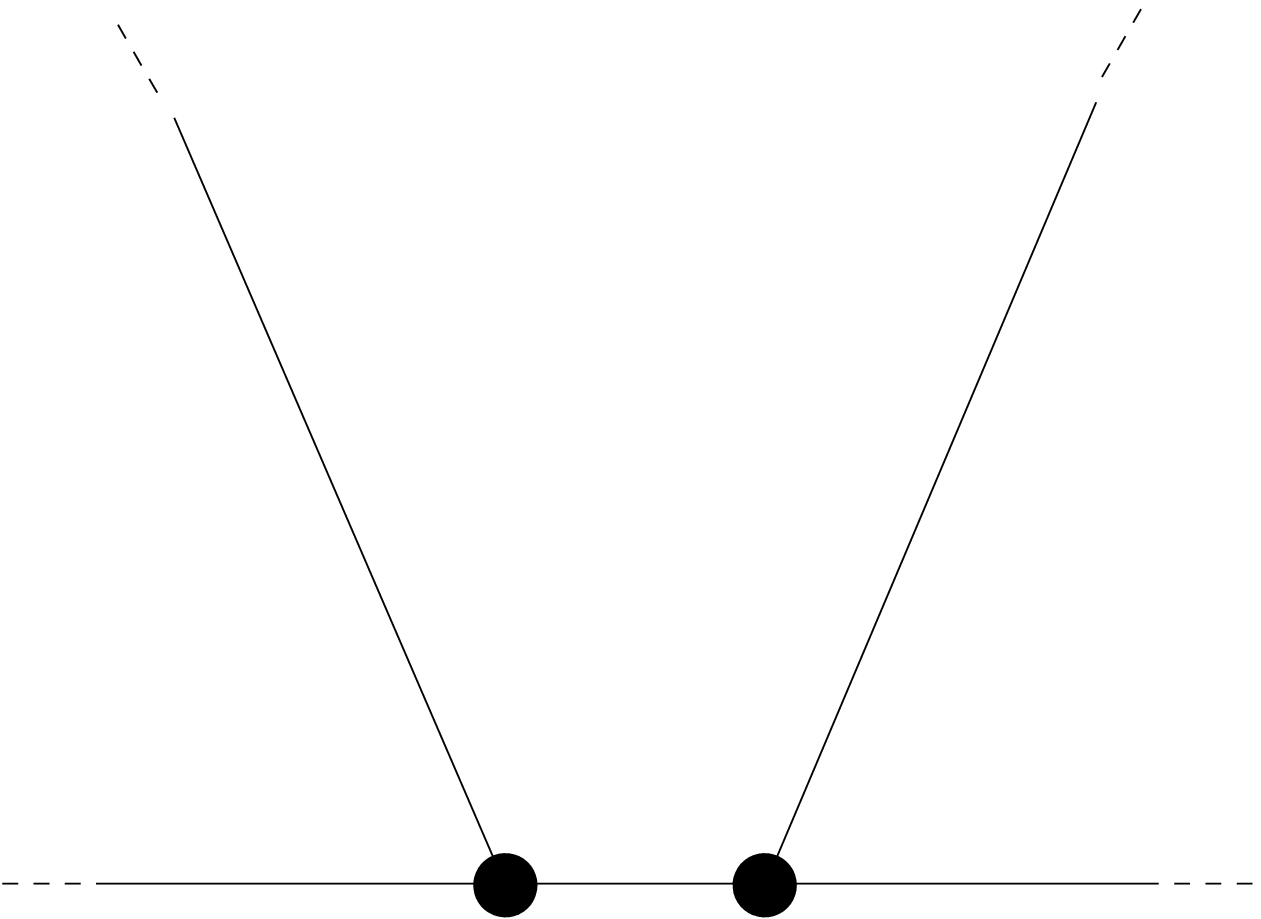}} -
\raisebox{-3pc}{\includegraphics[height=6pc]{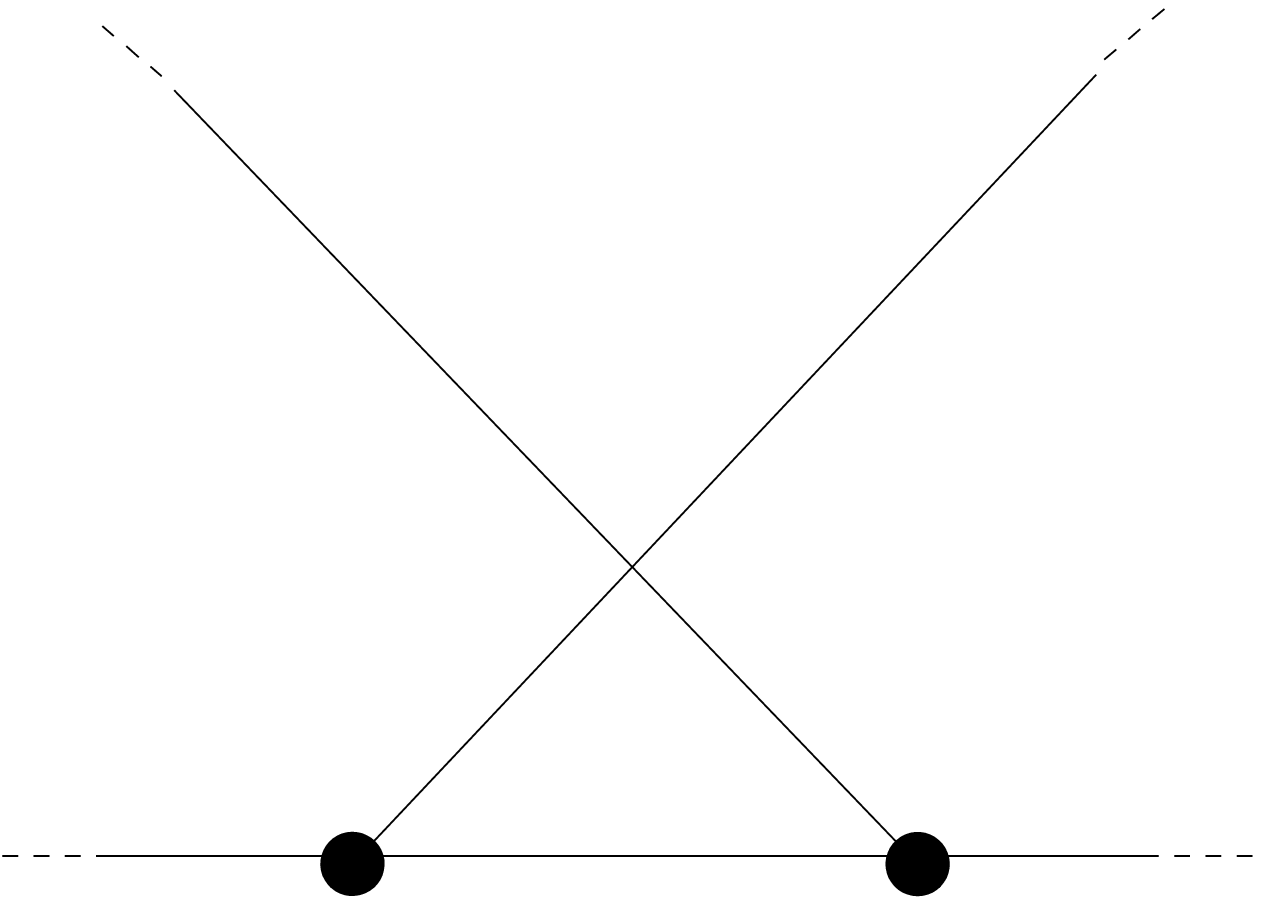}} 
\end{equation}
It is well known that $\W_m^k \cong (\TD_m^k/(STU, 1T))^*$; see Bar-Natan's paper \cite{BarNatan} for a proof.

It will be useful later to replace the Lie orientation by an ordering (labeling) of vertices and orientation of each edge.  In that case the STU relation is 
\begin{equation}
\label{LabeledSTU}
\raisebox{-3.5pc}{\includegraphics[height=7pc]{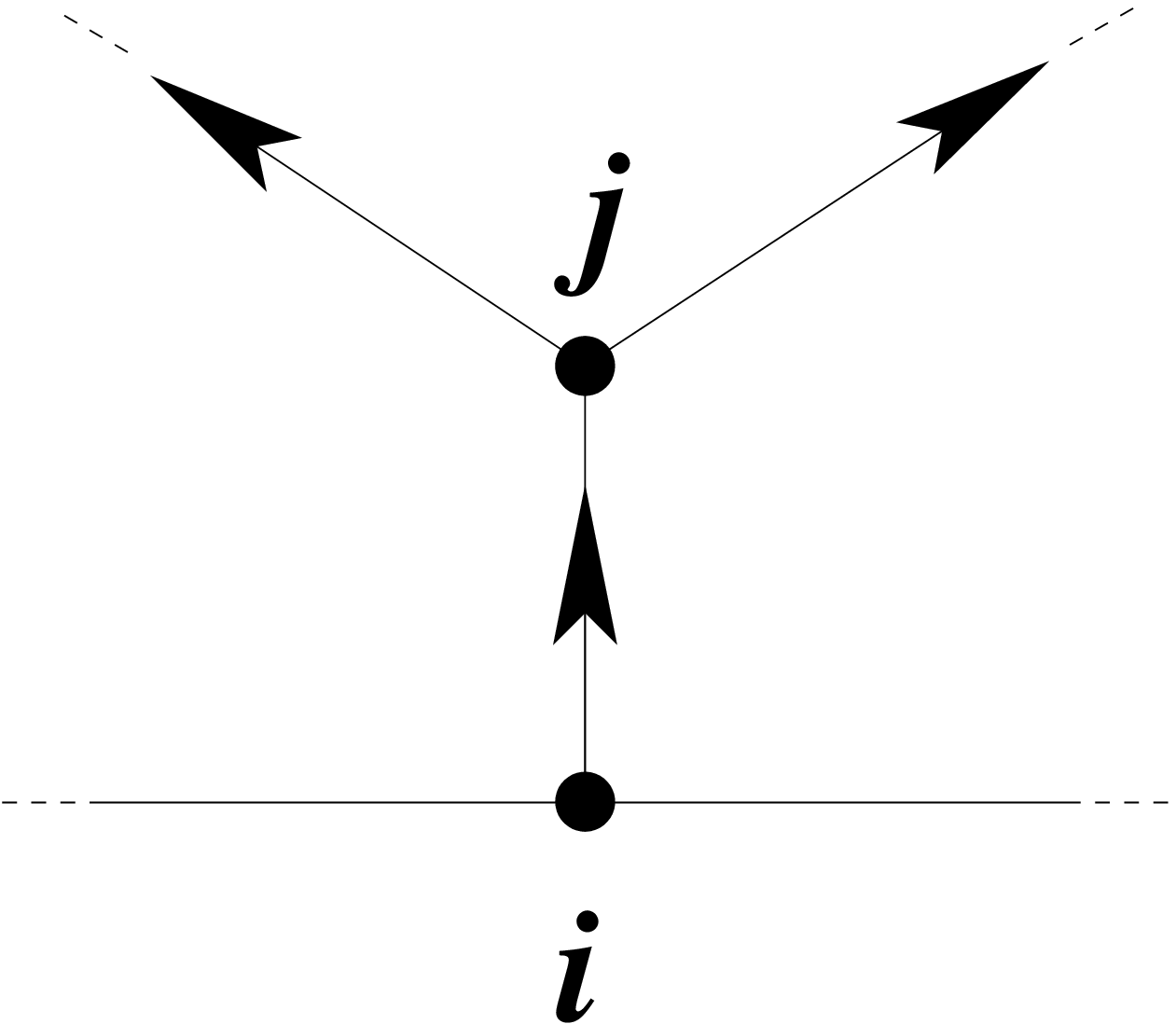}} =
\raisebox{-3.5pc}{\includegraphics[height=7pc]{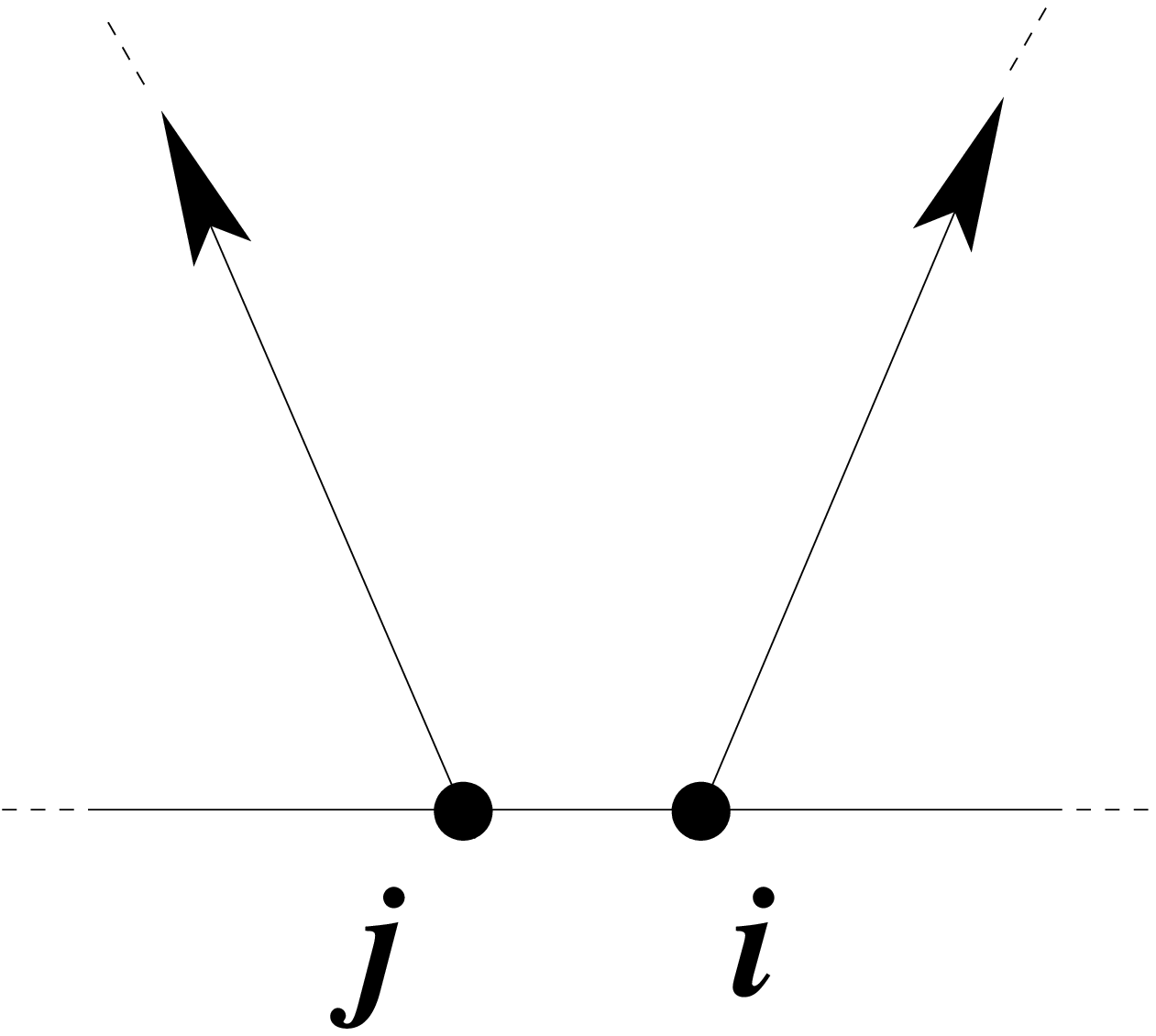}} -
\raisebox{-3.5pc}{\includegraphics[height=7pc]{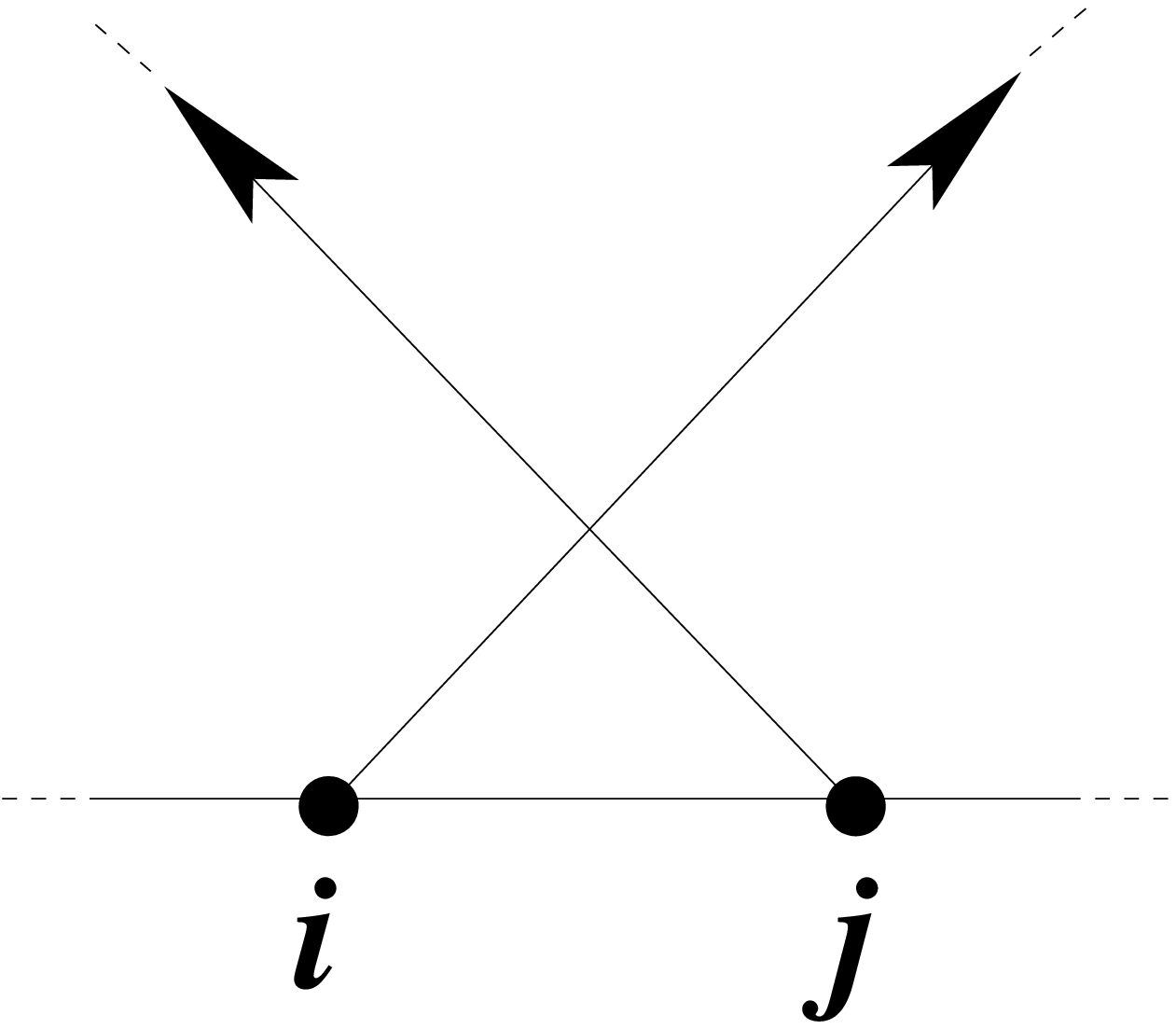}} 
\end{equation}
where the planar embedding does not matter.

The following originally appeared for closed knots in D Thurston's work \cite{Thurston} and was extended to string links and refined to give link-homotopy invariants in our paper \cite{MunsonVolicHtpyLinks}.  

\begin{theorem}{\cite{Thurston, VolicBT, MunsonVolicHtpyLinks}}
\label{BTfiniteType}
The map $\W^k_m \to \V^k_m/\V^k_{m-1}$ which is inverse to the map $w$ in (\ref{FiniteTypeIso}) can be constructed via 
\[
W \mapsto \frac{1}{(2m)!}\sum_{\mbox{labeled }D} W(D) I_D + \mbox{anomaly term}
\]
where the sum is taken over all trivalent diagrams $D$ with labels $\{1,...,2m\}$ on the vertices, and where $I_D$ is a configuration space integral depending on the labeled diagram $D$.
\end{theorem}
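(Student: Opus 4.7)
The plan is to construct the map $W \mapsto I_W$ via Bott--Taubes configuration space integration along the fiber and then verify three properties: (i) $I_W$ is a well-defined isotopy invariant, (ii) $I_W$ is of finite type of order at most $m$, and (iii) the weight system associated to $I_W$ via $w$ is $W$ itself. Together these give a right inverse to $w$, which, combined with the injectivity of $w$, proves the theorem.

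First I would set up the family of integrals $I_D$. For each labeled trivalent diagram $D$ on $k$ strands with $2m$ vertices, form the bundle over $\L_k$ whose fiber over a link $L$ is the Axelrod--Singer style compactification of the configuration space of $2m$ labeled points in $\R^3$, with interval vertices constrained to the appropriate strands of $L$ and free vertices moving in $\R^3$. Each edge $e$ of $D$ determines a Gauss-type map $\phi_e$ to $S^2$ via $(x_i,x_j) \mapsto (x_j-x_i)/|x_j-x_i|$. Pulling back a unit volume form on $S^2$ along each $\phi_e$ and wedging yields a form $\omega_D$ of exactly the fiber dimension; integration along the fiber produces a function $I_D$ on $\L_k$. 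The prefactor $1/(2m)!$ together with summing over labelings of $D$ removes the dependence on the labeling.

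Next I would verify isotopy invariance of $\sum_D W(D) I_D$ plus the anomaly correction, using Stokes' theorem for fiber integration as in equation (\ref{Stokes}) referenced in Section \ref{ConfigSpaceIntegrals}. The derivative of $I_D$ picks up a contribution from each codimension--1 stratum of $\partial C[D](L)$. These strata are of three kinds: \emph{principal faces} (exactly two points collide), \emph{hidden faces} (three or more points collide or a subset escapes to $\infty$ simultaneously), and \emph{infinity faces} coming from a single point escaping to $\pm\infty$ along a strand. Principal-face contributions pair up along the STU relation (\ref{LabeledSTU}) on the interval side and IHX on the free-vertex side; their weighted sum over $D$ vanishes precisely because $W$ satisfies 1T and 4T. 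Hidden faces vanish by the dimension-count and symmetry arguments of Kontsevich and Bott--Taubes, except for a residual contribution at the so-called anomalous face, which is absorbed into the anomaly term. Infinity faces require the extra care particular to string links and are handled as in the Munson--Voli\'{c} extension: the fixed distinct directions $v_i^\pm$ of Definition \ref{StringLinkDef} ensure that the limiting Gauss map values are sufficiently constrained for the corresponding integrals to vanish.

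For finite-typeness, I would evaluate $I_W$ on a singular link with $m+1$ double points, resolving each double point as an alternating difference of over- and under-crossings. Each resolution forces two configuration points to coincide, and a standard dimension argument shows that at least one edge-form $\phi_e^*\mathrm{vol}$ becomes degenerate, so $I_W$ vanishes on any such singular link, giving $I_W \in \V^k_m$. Finally, to identify the induced weight system, evaluate $I_W$ on a singular link realizing a chord diagram $D_0$: only the term with $D = D_0$ survives, and the configuration integral localizes to $\pm 1$ in this limit, yielding $W(D_0)$. This gives $w(I_W) = W$, as required.

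The main obstacle will be the careful vanishing analysis at hidden and infinity faces. Principal-face cancellation via 1T/4T and identification of the weight system are essentially formal, but the noncompact boundary strata specific to the string link setting, and the precise isolation and treatment of the anomalous stratum, are the delicate points; fortunately these have been worked out in \cite{Thurston, VolicBT, MunsonVolicHtpyLinks} and can be invoked directly.
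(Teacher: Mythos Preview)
Your outline is correct and follows the standard approach of the cited references. Note, however, that the paper does not itself prove this theorem: it is stated as a result imported from \cite{Thurston, VolicBT, MunsonVolicHtpyLinks}, with only the definition of $I_D$ and some remarks on the anomaly term supplied afterward. The paper does revisit the isotopy-invariance portion of the argument, but only in the special case of the triple linking number (Section~\ref{VanishingArguments}), and there the structure matches your sketch: Stokes' theorem reduces invariance to vanishing of boundary contributions, principal faces cancel in pairs via STU (Lemma~\ref{PrincipalCancel}), and infinite faces are handled by a degeneracy argument showing their image under the Gauss-type maps has positive codimension (Lemma~\ref{InfiniteVanish}). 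One small terminological mismatch: the paper reserves ``hidden face'' for strata where more than two $x_i$ collide, and treats faces where points escape to $\infty$ as a separate category (``infinite faces''), whereas you group some of the latter under hidden faces; this does not affect the substance of your argument.
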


We define the integral $I_D$ in the next Subsection.  The anomaly term is 
related to the stratum in configuration space where all the points
have ``collided."  Its important feature here is that it vanishes for any link-homotopy invariant, such as the triple linking number.
The invariant above might depend on a choice of spherical forms used to define the integrals.  
We will address this issue in Sections \ref{ChoiceOfForms} and \ref{Indeterminacy}.  

Note that $(2m)!$ is precisely the number of labelings of a given isomorphism class of unlabeled (and unoriented) diagrams.  Thus the invariant in the Theorem above can equivalently be written as a sum over \emph{unlabeled} diagrams
\begin{equation}
\label{FiniteTypeInvtUnlabeled}
W \mapsto \sum_{\mbox{unlabeled }D} \frac{1}{|\Aut(D)|}W(D) I_D + \mbox{anomaly term}
\end{equation}
where $|\Aut(D)|$ is the number of automorphisms of $D$.  Another way of saying this is that we sum over elements of a basis for $\TD^k_m$.  In particular, each $D$ has an orientation (so that $W(D)$ is well defined), but any given diagram only appears with one Lie orientation in this sum.  The formulation in Theorem \ref{BTfiniteType} makes the association $D \mapsto I_D$ more immediate, but we find this latter formulation more tractable because there are fewer diagrams to sum over.

\subsection{Configuration space integrals}
\label{ConfigSpaceIntegrals}
In their paper \cite{Bott-Taubes}, Bott and Taubes constructed knot invariants via a bundle over the knot space.  
For string links (see our paper \cite{MunsonVolicHtpyLinks}), the bundle is  
\[
 \xymatrix{
F[q_1,...,q_k;t] \ar[r] & E[q_1,...,q_k; t]\ar[d] \\
 & \L_k = \Emb (\coprod_k \R, \R^3)}
\]
where the fiber $F[q_1,...,q_k; t]$ is a compactification of a configuration space of $q_1+\dotsb +q_k+t$ points in $\R^3$, $q_i$ of which lie on the $i^\mathrm{th}$ component of the link.  More precisely, the total space $E[q_1,...,q_k;t]$ is the pullback in the following square, and the map to $\L_k$ is the left-hand map followed by projection to $\L_k$.  
\begin{equation}
\label{BTsquare}
 \xymatrix{
E[q_1,...q_k;t] \ar[r] \ar[d] & C_{q_1+\dotsb +q_k+t}[\R^3]\ar[d] \\
\L_k \x  C_{q_1,...,q_k} \left[\coprod_1^k \R\right]  \ar[r] & C_{q_1+\dotsb +q_k}[\R^3]}
\end{equation}
The pullback square is explained below, and in Section \ref{MilnorIntegrals}, we will more carefully examine the fibers $F[q_1,...,q_k;t]$ in our particular example.

\subsubsection{Brief review of the Axelrod--Singer compactification}
Here $C_q[M]$ denotes the Axelrod--Singer compactification of the space $C_q(M)$ of configurations of $q$ points on a compact manifold $M$.  It is obtained by blowing up every diagonal in the product $M^q$.  
For $M=\R^n$, $C_q[\R^n]$ is considered as the subspace of $C_{q+1}[S^n]$ where the last point is fixed at $\infty$.  


A stratum of $C_q[M]$ is labeled by a collection $\{S_1,...,S_k\}$ of distinct subsets $S_i\subset \{1,...,q\}$ with $|S_i|\geq 2$ and satisfying the condition 
\[
S_i \cap S_j\neq \emptyset \Rightarrow \mbox{ either } S_i \subset S_j \mbox{ or } S_j \subset S_i  
\]
For each set $S_i$ in the collection, we can think of the points in $S_i$ as having collided.  If there is an $S_j \subset S_i$ in the collection, we can think of the points in $S_j$ as having first collided with each other and then with the remaining points in $S_i$.  Two strata indexed by $\{S_1,...,S_k\}$ and $\{S'_1,...,S'_j\}$ intersect precisely when the set $\{S_1,...,S_k, S'_1,...,S'_j\}$ satisfies the above condition.  In that case, that is the set which indexes the intersection.

Let $s_i=|S_i|$.  Roughly speaking, this compactification keeps track of the location of the ``collided point" as well as an ``infinitesimal configuration" or \emph{screen} for each $S_i$.  Such a screen is a point the space $(C_{s_i}(T_pM))/(\R^n \rtimes \R_+)$, where $\R^n \rtimes \R_+$ is the group of translations and oriented scalings of $\R^n \cong T_p M$.  From this one can verify that the stratum labeled $\{S_1,...,S_k\}$ has codimension $k$.  Consult the papers of Fulton and MacPherson \cite{Fulton-MacPherson} and Axelrod and Singer \cite{Axelrod-Singer} for more precise details.
\qed

Now it is clear that the right-hand vertical map in the square (\ref{BTsquare}) can be defined as projection to the first $q_1+\dotsb +q_k$ points.  The lower horizontal map (whose domain is defined below) 
comes from the fact that an embedding induces a map on configuration spaces.

\subsubsection{Compactifying the configurations of points on a string link}
\label{TechnicalForStringLinks}

Let $Q=q_1+\dotsb +q_k$, and define $C_{q_1,...,q_k} \left[\coprod_1^k \R\right] $ as the closure of the image of the map 
\[
\xymatrix{ C_{q_1}(\R) \x ... \x C_{q_k}(\R) \ar[r] & C_Q [\R^3] \left( \subset C_{Q+1} [S^3] \right)}
\]
induced by a fixed string link $L=(L_1,...,L_k)$.  Then the lower horizontal map in (\ref{BTsquare}) is certainly well defined.  To integrate over the fiber of the bundle $E[q_1,...,q_k;t]\to \L_k$, one needs to know that the closure $C_{q_1,...,q_k} \left[\coprod_1^k \R\right]$ has the structure of a manifold with corners.  Indeed, it inherits one from $C_{q_1+\dotsb +q_k+1} [S^3]$.  For a thorough proof, see 
\cite[Lemma 4.4]{MunsonVolicHtpyLinks}.  The basic idea of the proof is as follows.
Away from $\infty$, the local structure clearly is the same as in $C_q[\R]$, so the point is to describe a neighborhood of a configuration where some points have collided with $\infty$.  Such a neighborhood has various strata, points of which are described by collections of screens, just as in $C_Q[\R^3]$.  The only difference is that the tangent space to $\infty \in S^3$ is replaced by the subspace of lines (or rays) through its origin which correspond to the fixed directions of the string links components towards infinity.  (Thus the spaces of screens have lower dimension than in $C_Q[\R^3]$, but the same codimension.)
\qed

\begin{remark}[Notation] In the case of knots, the pullback $E[q_1,...,q_k;t]$ above reduces to a space $E[q;t]$ which was called $E_{q,t}$ in our previous work \cite{Rbo}; $C_{n,t}$ in the papers of Bott and Taubes \cite{Bott-Taubes} and Cattaneo et al
\cite{Cattaneo};
$C[q+t; \K] = C[q+t; \L_1]$ in our paper with Munson and Voli\'{c} \cite{MunsonVolicHtpyLinks}, and $ev^*C_t(\R^3)$ 
in work of Pelatt \cite{Pelatt}.  We have tried to use the shortest notation that distinguishes between this total space and the fiber of the associated bundle.  
\end{remark}

One can show that the fiber $F[q_1,...,q_k;t]$ is a manifold with corners \cite[Proposition 4.6]{MunsonVolicHtpyLinks}.  This fiber is also orientable.  By fixing an orientation on $\R^3$ and using the orientation of the link, the fibers become \emph{oriented}.  

Next, one integrates differential forms along the fiber of $E[q_1,...,q_k; t] \to \L_k$ 
Specifically, the forms are obtained from the following maps:
\begin{gather*}
\phi_{ij}\co C_r (\R^3)  \to S^2, \qquad 1\leq i<j \leq r \\
\phi_{ij}(x_1,...,x_r) = \frac{x_j - x_i}{|x_j - x_i|}
\end{gather*}
This induces 
a map  $\phi_{ij}\co E(q_1,...,q_k; t) \to S^2$ for every pair $(i<j)$.  Let $\omega$ be a 2--form on $S^2$ representing a generator of $H^2(S^2)$ and let $\theta_{ij} = \phi_{ij}^* \omega$.  

We can now define the $I_D$ in Theorem \ref{BTfiniteType}.  Given a trivalent diagram $D \in \TD_m^k$ with labels $\{1,...,2m\}$ on the vertices, let $q_i$ be the number of vertices on the $i$th interval, and let $t$ be the number of free vertices.  The function $I_D$ is an integral along the fiber $F[q_1,...,q_k;t]$ of the bundle $E[q_1,...,q_k;t]\to \L_k$, given by
\[
I_D := \int_{F[q_1,...,q_k;t]} \theta_D := \int_{F[q_1,...,q_k;t]} \theta_{i_1 j_1} \cdots \theta_{i_e j_e}
\]
where $(i_1<j_1),...,(i_e<j_e)$ are the pairs of endpoints of edges in $D$.  Since these forms are even-dimensional, their order in the product is irrelevant.  For example, to the diagram 
\[
\includegraphics[height=6pc]{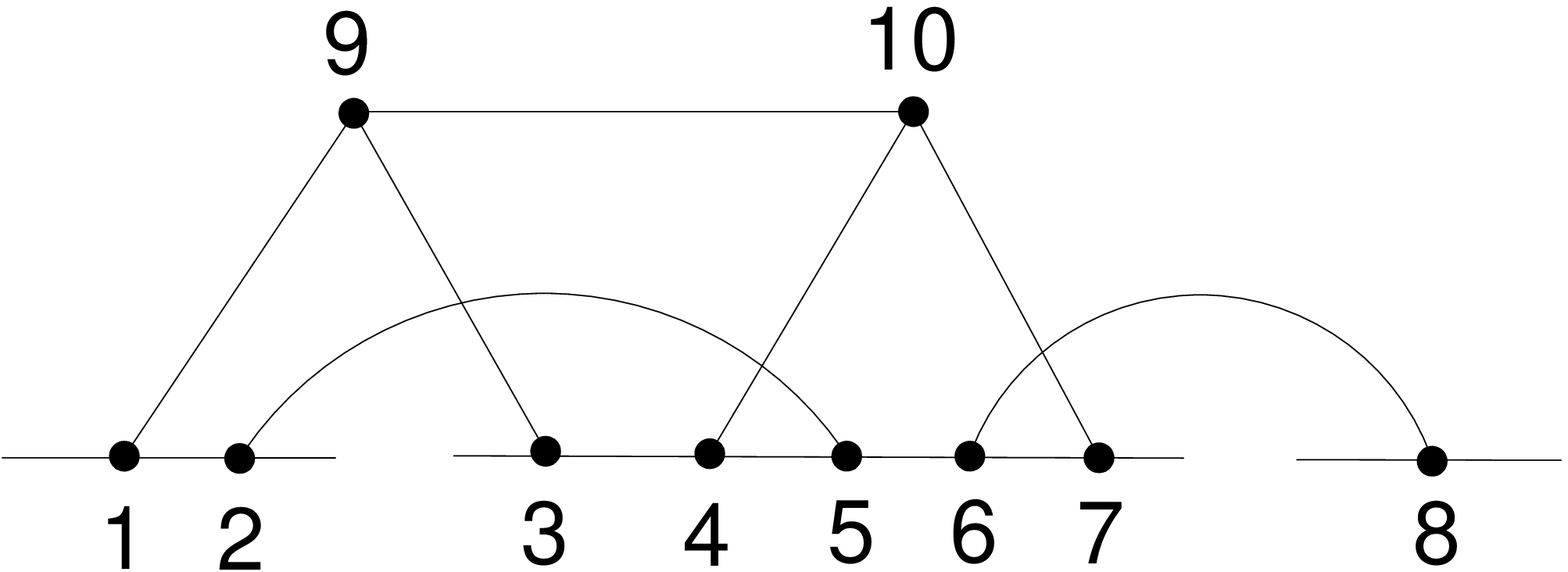}
\]
we associate the following integral:
\[
\int_{F[2,5,1;2]} \theta_{19}\theta_{25}\theta_{39}\theta_{4,10}\theta_{68}\theta_{7,10}\theta_{9,10}
\]
Since a Lie orientation is equivalent to a vertex-ordering and orientation of every edge, there is a well defined way of associating an integral to an unlabeled Lie-oriented diagram.


\begin{remark}
In the existing literature, each $\omega_i$ is often rotation-invariant or at least symmetric with respect to the antipodal map.  Because of the particular arguments we invoke, we will instead find it useful to choose different non-symmetric forms $\omega_i$.  Thus we will eventually need to keep track of the order of the $S^2$ factors in the product above.
\end{remark}

Since both the dimension of $F=F[q_1,...,q_k;t]$ and the degree of $\theta_D$ (= twice the number of edges in $D$) is $q_1+\dotsb +q_k + 3t$, the integral $I_D$ is a 0--form, ie, a function.  

In general each $I_D$ is not closed, but certain linear combinations of various $I_D$ are closed.  
In fact, since the fiber has nonempty boundary, Stokes' theorem implies that for any form $\alpha$
\begin{equation}
\label{Stokes}
d \int_F \alpha = \int_F d \alpha \pm \int_{\d F} (\alpha|_{\d_F E}).
\end{equation}
If $\alpha$ is a linear combination of products of $\theta_{ij}$, then $\alpha$ is closed, so showing that $\int_F \alpha$ is closed amounts to showing that the integral of the restriction of $\alpha$ to $\d_F E \subset E$ vanishes.  

Part of the content of Theorem \ref{BTfiniteType} is that $\Sigma_D W(D) I_D$ is closed, ie, a locally constant function, or link invariant (up to an ``anomaly term," which vanishes for  link-homotopy invariants). We will review the proof of this for the triple linking number in Section \ref{VanishingArguments}.

\subsection{Milnor triple linking number for string links}
\label{MilnorInvariants}
Finally, we review the adaptation of Milnor's triple linking number \cite{MilnorLinkGroups} to the case of string links.  
The triple linking number is defined by studying the lower central series of the fundamental group of the link complement.  Let $L$ be (the image of) a $k$-component string link in $I \x D^2$, and let $\pi = \pi_1(I \x D^2 \setminus L, p)$.  To show that our invariant agrees with the triple linking number, we will need to choose basepoint, so let $p=(0,0,1)$. 
Let $\pi^1=\pi$, and let $\pi^n = [\pi, \pi^{n-1}]$, the $n^\mathrm{th}$ term in the lower central series.  From the Wirtinger presentation of a link, one sees that the meridians and all their conjugates generate $\pi$.  Milnor proved \cite[Theorem 4]{MilnorIsotopy} that the meridians themselves generate $\pi/\pi^n$ for all $n$.  Thus if $F_k$ denotes the free group on generators $a_1,...,a_k$, there is a map $F_k/F_k^n \to \pi/\pi^n$ which is surjective, and in fact an isomorphism; see also the work of Habegger and Lin \cite{HabeggerLin}.  Thus there is a word in the $a_i$ which maps to the $j^\mathrm{th}$ longitude $[\ell_j] \in \pi/ \pi^n$.  (Note that an orientation of the link gives \emph{canonical} longitudes and thus \emph{canonical} meridians, by choosing each meridian to have linking number +1 with the corresponding longitude.)

The \emph{Magnus expansion} is a group homomorphism from $F_k$ to the multiplicative group in the power series ring in non-commuting variables $t_1,...,t_k$; it is given by $a_i \mapsto 1+t_i$ and $a_i^{-1}\mapsto 1 - t_i +t_i^2 -...$.  The \emph{Milnor invariant} $\mu_{i_1,...,i_r; j}$ is then defined as the coefficient of $t_{i_1}\cdots t_{i_r}$ in the Magnus expansion of $[\ell_j] \in \pi/\pi^n$.  It is straightforward to check that this is well defined for $n>r$.  
The invariants $\mu_{i;j}$ are just the pairwise linking numbers.  For three-component string links, there are six different ``triple linking numbers", but by permuting labels on the strands it suffices to study just one of them.  Thus we take $\mu_{123}\equiv \mu_{12;3}$ as the triple linking number for string links. 

Milnor's original definitions were for closed links, but in that case the $\mu$ invariants are only well defined modulo the gcd of the lower order invariants (ie, pairwise linking numbers in the case of $\mu_{123}$).  
These invariants of closed links have also received recent attention, for example in work of Mellor and Melvin \cite{MellorMelvin}, as well as in work of DeTurck, Gluck, Komendarczyk, Melvin, Shonkwiler, and Vela-Vick\cite{SixAuthorsShort}. 
We restrict our attention to Milnor invariants of string links (as studied by Habegger and Lin \cite{HabeggerLin}), where they are well defined integers.

\section{The configuration space integral for the triple linking number}
\label{MilnorIntegrals}

In this Section, we examine a configuration space integral formula for the triple linking number $\mu_{123}$ for string links.  
We first use a formula of Mellor to find an expression of the triple linking number for string links in terms of configuration space integrals.  Then we examine in detail the compactifications of configuration spaces which appear in these integrals.  This will be useful in our description of the invariant as a degree.   Then we examine how the integrals combine to make a link invariant.  This step is essentially the proof of the isotopy invariance of Theorem \ref{BTfiniteType} 
for the triple linking number.  
Going through this proof is also important because it will show us how to glue and collapse boundaries of the configuration spaces in our 
construction.  

\subsection{The weight system for the triple linking number}
\label{MilnorWeightSystem}
It is known that for string links, each Milnor invariant $\mu_{i_1...i_r; j}$ is finite-type of type $r$; see the papers of Bar-Natan \cite{BNHtpyLinks} and Mellor \cite{MellorMilnorWeight}.  The triple linking number $\mu_{123}\equiv\mu_{12;3}$ has an associated weight system $W_{123}$, given by the map (\ref{CanonWeightSystem}).  
Recall that there is no anomaly term for weight systems which yield link-homotopy invariants.  (We will verify this fact in this example in this Section.)  Then the formulation (\ref{FiniteTypeInvtUnlabeled}) of Theorem \ref{BTfiniteType} says that
\[
\sum_{\mbox{unlabeled }D} \frac{1}{|\Aut(D)|} W_{123}(D) I_D 
\]
is an element of $\V_2$ which in $\V_2/\V_1$ agrees with the image of the triple linking number.
So from now until the end of Section \ref{MilnorIntegrals} we will let $\mu_{123}\in \V_2$ denote the right-hand side of the above equality.  
We will check in Section \ref{Indeterminacy} that $\mu_{123}$ is 
the bona fide triple linking number, thus excusing our abuse of notation.

We have $W_{123}\in \W_2^3 \subset (\CD_2^3)^*$, a functional on diagrams with four vertices.  Since the triple linking number is an invariant of link-homotopy, we know that $W_{123}$ can be nonzero only on diagrams where no chord joins vertices on the same strand; see our paper with Munson and Voli\'{c} \cite[Definition 3.27, Theorem 5.8]{MunsonVolicHtpyLinks}.  We may furthermore consider only diagrams where each of the three strands has at least one vertex, since the triple linking number cannot detect crossing changes between only two components.  There are seven such diagrams spanning\footnote{These seven diagrams satisfy relations in $\CD_2^3/(4T)$, but the calculation below is easy enough that we do not need to use these relations.} a subspace of $\CD_2^3$:  the diagrams $L,M,R$ \\
\[
\includegraphics[height=1.7pc]{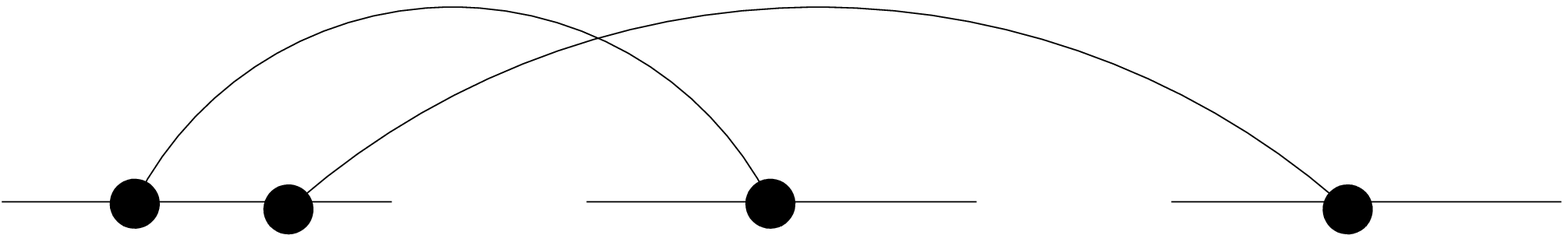},
\includegraphics[height=2pc]{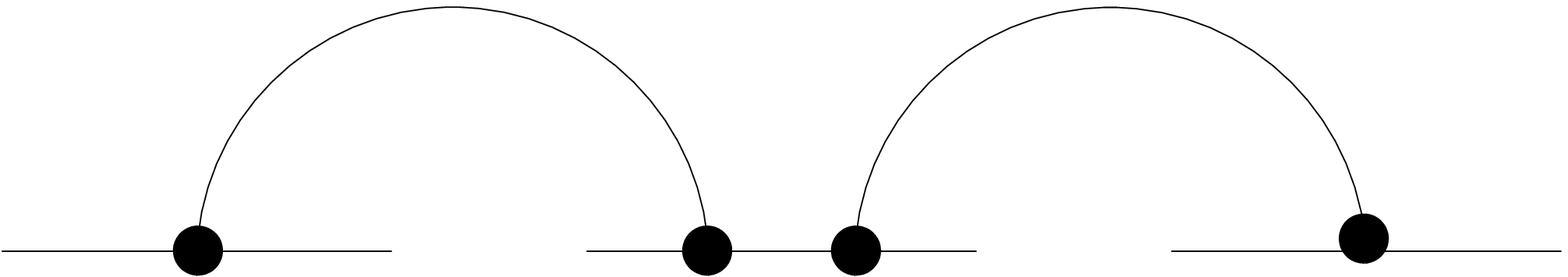},
\includegraphics[height=1.7pc]{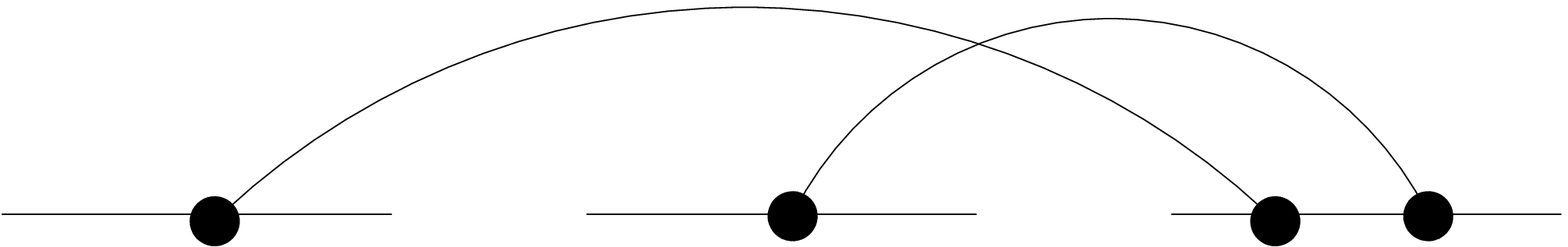}
\]
the diagrams $L',M',R'$ 
\[
\includegraphics[height=3pc]{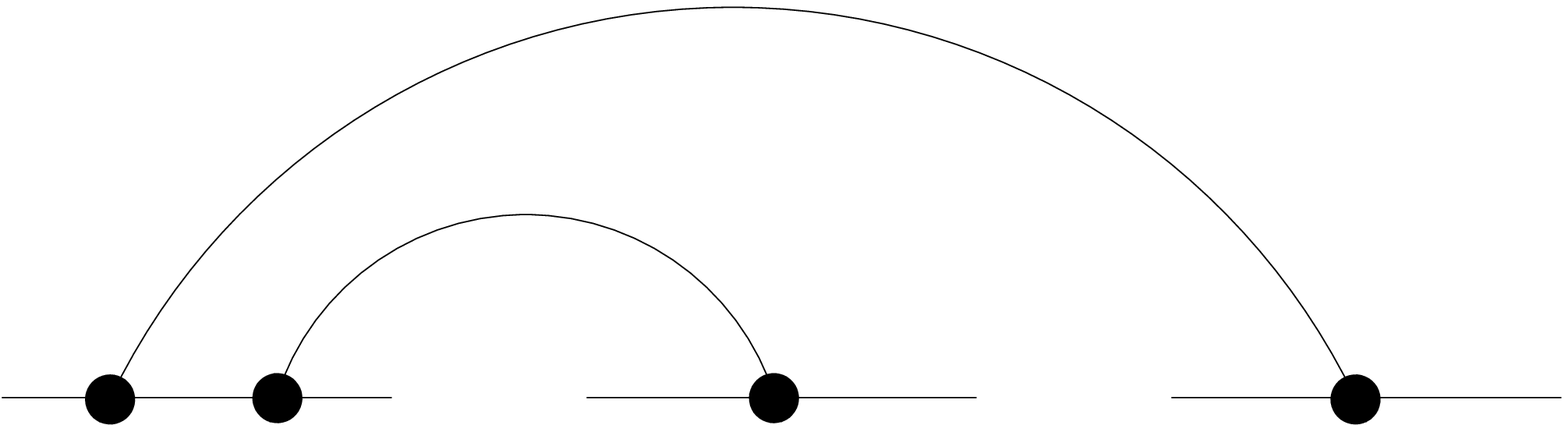},
\includegraphics[height=2pc]{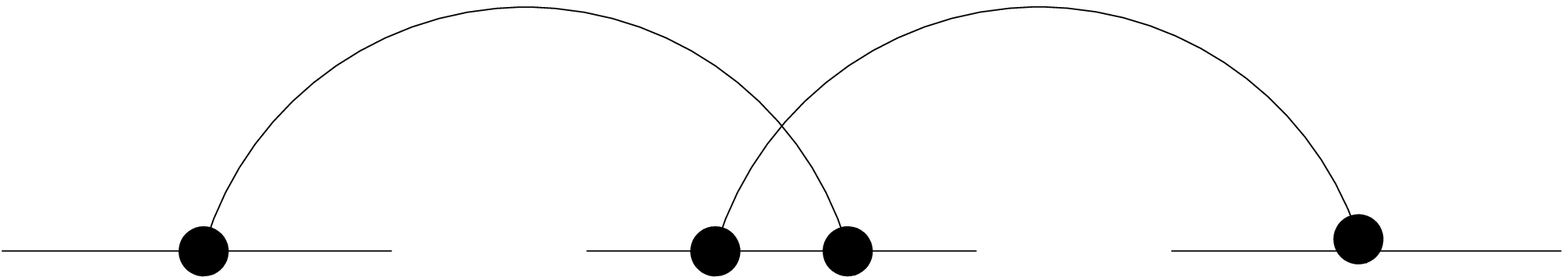},
\includegraphics[height=3pc]{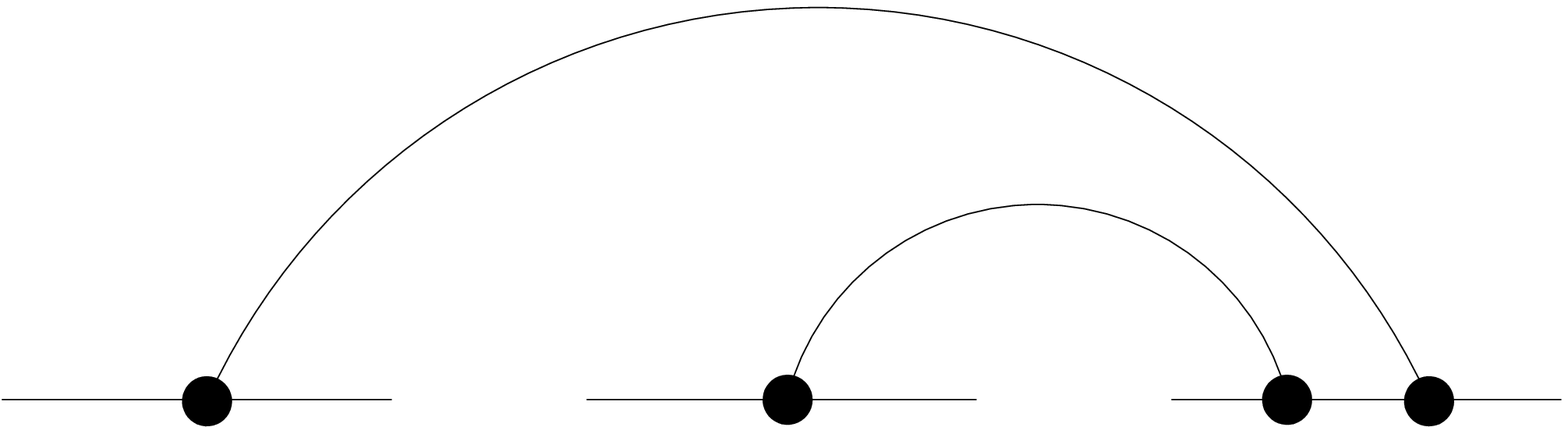}
\]
and the diagram $T$
\[
\includegraphics[height=3pc]{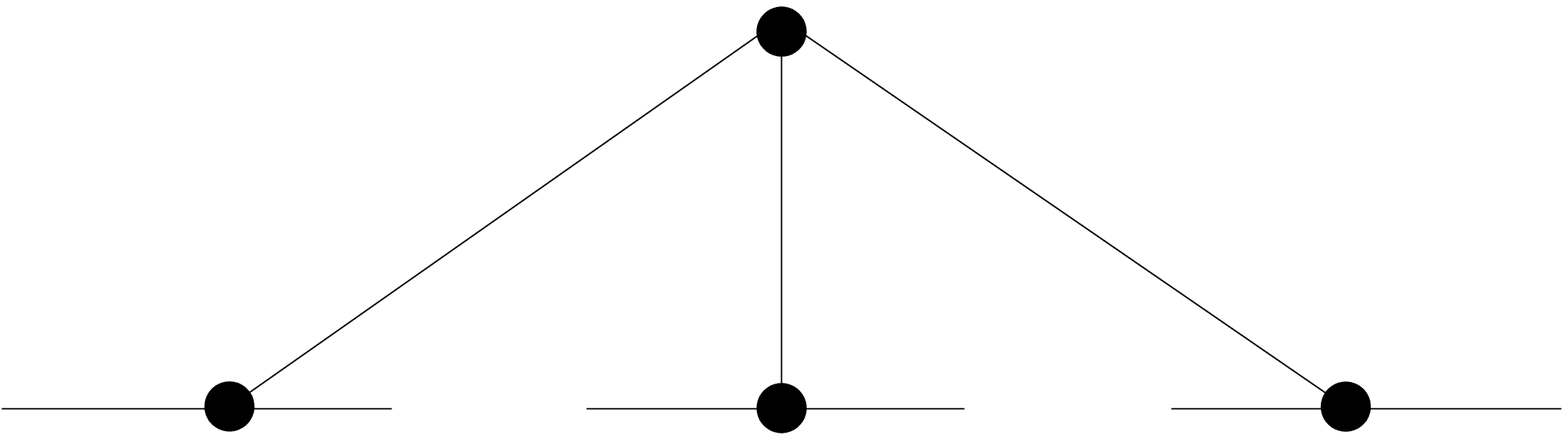}.
\]
Since the strands are labeled and oriented, each of these diagrams has no nontrivial automorphisms.  Thus we have so far
\[
\mu_{123} = \sum_{D\in \{L,M,R,L'M',R',T\}} W_{123}(D) I_D.
\]
Using a skein relation for the Milnor invariants proven by Polyak \cite{PolyakMilnorSkein}, Mellor proved a recursive formula for the weight systems of these invariants.  This formula is Theorem 2 of Mellor's paper \cite{MellorMilnorWeight}, to which the reader may refer for details.  
We apply Mellor's formula to the weight system $W_{123}$ and the six chord diagrams $L,M,R,L',M',R'$.  The result is that $W_{123}$ vanishes on $L',M',$ and $R'$, and its values on $L,M,R$ are $1,-1,1$.

The STU relation will ultimately tell us the value of $W_{123}$ on $T$.  But first, to clarify the association $D \mapsto I_D$ in this setting of unlabeled diagrams, the canonical Lie orientation on a chord diagram corresponds to a canonical ordering of the vertices (from left to right) and a canonical orientation of the edges (also from left to right).  This defines $I_D$ for the chord diagrams $D$.  For the diagram $T$, we can choose any vertex-ordering and edge-orientations and then account for the sign by using the labeled STU relation (\ref{LabeledSTU}) to compute $W_{123}(T)$.  We choose to label the free vertex ``4" and orient the edges from smaller to larger index.  

\begin{remark} 
If one considers the unlabeled STU relation (\ref{UnlabeledSTU}) featuring the unlabeled, Lie-oriented basis element $T$, one gets the same sign for $W_{123}(T)$ as for the labeling we chose.  Thus under the correspondence between the two types of orientation, our choice of labeling of $T$ gives the same orientation as the unlabeled, Lie-oriented basis element $T$.
\end{remark} 

The calculation described above yields the following result.

\begin{corollary}[to Theorem 2 of Mellor's paper \cite{MellorMilnorWeight} and Theorem \ref{BTfiniteType}]
\label{MellorCor}
The triple linking number for string links is, up to a type--1 invariant, the sum of integrals
\[
\mu_{123} =  I_L - I_M + I_R - I_T 
\]
where the terms $I_D$ above are the integrals over compactified configuration spaces associated to the diagrams $D=L,M,R,T$:
\begin{align*}
I_L = \int_{F[2,1,1;0]} \theta_{13}\theta_{24} & &
I_M= \int_{F[1,2,1;0]} \theta_{12}\theta_{34}\\
I_R=\int_{F[1,1,2;0]} \theta_{13}\theta_{24} & &
I_T= \int_{F[1,1,1;1]} \theta_{14}\theta_{24}\theta_{34}
\end{align*}
\end{corollary}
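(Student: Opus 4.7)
The starting point is the unlabeled form (\ref{FiniteTypeInvtUnlabeled}) of Theorem \ref{BTfiniteType}. Since $\mu_{123}$ is a link-homotopy invariant, the anomaly term vanishes (as will be recalled in Section \ref{VanishingArguments}), so up to a type--1 invariant
\[
\mu_{123} \equiv \sum_{\text{unlabeled } D\,\in\,\TD_2^3} \frac{1}{|\Aut(D)|} W_{123}(D)\, I_D.
\]
The plan is then to identify the diagrams $D$ that can possibly contribute and to compute $W_{123}$ on each such $D$.

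I would first reduce the sum to the seven diagrams $L, M, R, L', M', R', T$ pictured in Section \ref{MilnorWeightSystem}. The weight system underlying any link-homotopy invariant must vanish on any diagram carrying a chord between two vertices on the same strand \cite[Theorem 5.8]{MunsonVolicHtpyLinks}. Diagrams missing vertices on some strand are also killed by $W_{123}$, since $\mu_{123}$ cannot detect changes involving only two of the three components. Inside $\CD_2^3$ this leaves precisely $L, M, R, L', M', R'$, and inside $\TD_2^3$ the only additional basis element compatible with these restrictions is the tripod $T$. Because the strands are labeled and oriented, none of these seven diagrams admits a nontrivial automorphism, so the combinatorial factor $1/|\Aut(D)|$ equals $1$ throughout, and the sum collapses to a plain $\Z$-linear combination of the seven $I_D$.

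Next I would compute the values of $W_{123}$. For the six chord diagrams, this is a direct application of Mellor's recursive formula (Theorem 2 of \cite{MellorMilnorWeight}) to $W_{123}$, which yields $W_{123}(L)=1$, $W_{123}(M)=-1$, $W_{123}(R)=1$, and $W_{123}(L')=W_{123}(M')=W_{123}(R')=0$. For the tripod $T$, I would apply the labeled STU relation (\ref{LabeledSTU}) at an interval vertex of one of the chord diagrams, expressing that chord diagram as a signed combination involving $T$ together with another chord diagram; solving for $W_{123}(T)$ using the values just computed then determines the coefficient of $I_T$. With the chosen convention of labeling the free vertex $4$ and orienting edges from smaller to larger index, this produces the sign that puts $-I_T$ into the alternating sum $I_L - I_M + I_R - I_T$.

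The main obstacle will be keeping track of orientation signs. The canonical Lie orientation used to define $I_D$ for chord diagrams (chords above the strands, vertices ordered from left to right) must be carefully compared with the explicit vertex-labeling and edge-orientation chosen for $T$ when (\ref{LabeledSTU}) is invoked, and the unlabeled/labeled correspondence must be consistent across all four diagrams so that the individual contributions combine with the correct alternating signs. The remark preceding the statement of the corollary guarantees that the chosen labeling of $T$ agrees with the Lie orientation of the basis element $T$, so once this comparison is done once, no further sign adjustment should be needed; still, this is the step where a small oversight would flip the sign of the final answer.
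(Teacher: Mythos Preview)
Your proposal is correct and follows essentially the same route as the paper: restrict to the seven diagrams via the link-homotopy and three-strand constraints, note the automorphism groups are trivial, evaluate $W_{123}$ on the six chord diagrams by Mellor's formula, and then determine $W_{123}(T)$ via STU. One small wording point: the STU relation is applied at the free vertex of the tripod $T$ (expressing $T$ as a signed difference of two chord diagrams, e.g.\ $L$ and $L'$), not ``at an interval vertex of one of the chord diagrams''; with the chosen labeling this gives $W_{123}(T)=-W_{123}(L)=-1$, matching the $-I_T$ you want.
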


We will see in Section \ref{Indeterminacy} that we can remove the ``up to a type--1 invariant" indeterminacy by making appropriate choices in defining these integrals.

\subsection{The compactified configuration spaces}
\label{CptfdConfigSpaces}
In our topological construction, it will be useful to understand in detail the compactifications of these four configuration space fibers, which we will sometimes abbreviate as $F_L, F_M, F_R, F_T$.  \\

\subsubsection{The space $F_T$}
\label{TheSpaceFT}
First we describe $F_T=F[1,1,1;1]$, the fiber of the bundle $E[1,1,1;1]$ over a 3-component string link $L$. This space compactifies $F(1,1,1;1)=\{ (x_1,...,x_4) \in C_4(\R^3) | x_1,x_2,x_3 \in \im \>L \}$.  By forgetting  $x_4$ which is free to roam in $\R^3$, the uncompactified $F(1,1,1;1)$ maps to $\R\x \R \x \R$ as a fiber bundle with fiber homeomorphic to $\R^3 \setminus \{\mbox{3 $D^3$'s}\}$, the complement of three closed 3-balls in $\R^3$.  The base is contractible, so $F(1,1,1;1)\cong \R \x \R \x \R \x (\R^3 \setminus \{\mbox{3 $D^3$'s}\})$. 

The compactification $F[1,1,1;1]$ is a subset of the Axelrod--Singer compactification $C_4[\R^3]\subset C_5[S^3]$.  Since $F[1,1,1;1]$ is a manifold with corners, it has a collar neighborhood of the boundary and is homotopy-equivalent to its interior, $F(1,1,1;1)$.  In $F[1,1,1;1]$ the subsets which index codimension--1 faces are 
\[
\{x_1,x_4\}, \{x_2,x_4\}, \{x_3,x_4\}
\] 
as well as the $15$ sets
\[
\{x_1, \infty\}, \{x_2,\infty\},\{x_3, \infty\}, \{x_4,\infty\},..., \{x_1,x_2,x_3,x_4, \infty\}
\]
consisting of a nonempty subset of the $x_i$ together with $\infty$.  If $S$ is a set of the second type, the corresponding face has $2^{a(S)}$ connected components, where $a(S)= \#(S \cap \{x_1,x_2,x_3\})$.  

For any of these compactified configuration spaces, we call a face whose indexing set consists of precisely two $x_i$ a \emph{principal face}.  A face whose indexing set contains more than two $x_i$ is called a \emph{hidden face}.  We call a face whose indexing set contains the symbol $\infty$ a \emph{face at infinity} or, for reduced verbiage, just an \emph{infinite face}.  In all the spaces which we consider here, every face is either principal or infinite.

An explicit description of an infinite face will be useful in the proof of Lemma \ref{InfiniteVanish} below.  In $C_n[\R^3]$, the face where $m$ of the $n$ points have escaped to infinity can be described as the product $C_{n-m}(\R^3) \x (C_{m+1}(\R^3)/(\R^3 \rtimes \R_+))$ where $\R^3 \rtimes \R_+$ is the group of translations and scalings of $\R^3$ (cf.~ Turchin's paper \cite[Section 6, page 18]{VictorContextFree}).  The $(m+1)^\mathrm{st}$ point in the second factor is a ``fat point" of points which have not escaped to infinity.  If we translate this point to the origin, then we can rewrite such a face as $C_{n-m}(\R^3) \x (C_m(\R^3 \setminus \{0\})/\R_+)$.  (Alternatively, $\R^3 \setminus \{0\}$ could be replaced by the complement of a ball around the origin.)  Note that this description applies even when $m=n$.  In $F[1,1,1;1]$ an infinite face has a similar description, except that the $x_i$ are constrained to lie on the fixed linear embedding.

\subsubsection{The spaces $F_L, F_M, F_R$}
Now we describe the fiber $F_L=F[2,1,1;0]$ of the bundle $E[2,1,1;0]$, which compactifies $F(2,1,1;0)=\{(x_1,...,x_4) \in C_4(\R\sqcup \R \sqcup \R)| (x_1<x_2)\in C_2(\R), x_3\in \R, x_4\in \R\}$.  The latter space is homeomorphic to an open 4-ball.  As in the case of $F_T$ above, the compactification has a collar neighborhood of the boundary, and is homotopy-equivalent to its interior $F(2,1,1;0)$.

The subsets which index codimension--1 faces of $F[2,1,1;0]$ are $\{x_1,x_2\}$ and the 15 sets consisting of a nonempty subset of $\{x_1,...,x_4\}$ together with $\infty$.  The compactification $F[2,1,1;0]$ is a manifold with corners, but topologically it is homeomorphic to a closed ball.  As with $F_T$, an infinite face may have multiple components, and the description of infinite faces in the previous Subsection apply equally well here.

The spaces $F_M=F[1,2,1;0]$ and $F_R=F[1,1,2;0]$ are constructed in a similar manner and are diffeomorphic to $F_L=F[2,1,1;0]$.

\subsection{Vanishing Arguments}
\label{VanishingArguments}

Having examined the compactifications in our example, we now examine how the configuration space integrals combine to form a link invariant.  We first take a small step towards our gluing construction by rewriting all four integrals $I_D$ as integrals over 6--dimensional fibers.  For each $D=L,M,R$, we replace the integral along the fiber of $E_D \to \L_3$ by the integral along the fiber of $E_D \x S^2 \to \L_3$, where the original integrand is multiplied by form $\omega$ on $S^2$ with unit volume:
\begin{align*}
I_L = \int_{F[2,1,1;0] \x S^2} \theta_{13}\theta_{24}\omega & &
I_M= \int_{F[1,2,1;0] \x S^2} \theta_{12}\theta_{34}\omega \\
I_R=\int_{F[1,1,2;0] \x S^2} \theta_{13}\theta_{24}\omega & &
I_T= \int_{F[1,1,1;1]} \theta_{14}\theta_{24}\theta_{34}
\end{align*}
It will also be convenient to transpose some of the indices in the $\theta_{ij}$ as below, where we use the fact that $\theta_{ij}=-\theta_{ji}$.  The choice of transpositions will be explained later (see Remark \ref{OrderOfIndices} and Figure \ref{CollidedDiagrams}).
\begin{align*}
I_L = \int_{F_L \x S^2} \theta_{31}\theta_{42}\omega & & 
I_M= \int_{F_M \x S^2} \theta_{12}(-\theta_{43})\omega \\
I_R=\int_{F_R \x S^2} \theta_{13}\theta_{24}\omega & & 
I_T= \int_{F_T} \theta_{14}\theta_{24}\theta_{34}
\end{align*}

The more general statement of the lemma below is well known for knots, though there is an extra subtlety for string links because of the nature of some additional infinite faces \cite[Remark 4.32]{MunsonVolicHtpyLinks}.  However, the description of the infinite faces at the end of \ref{TheSpaceFT} makes it clear that this lemma holds for them too.  
The transpositions of some of the indices in the $\phi_{ij}$ below follow those in the integrals above.

\begin{lemma}
\label{MapsPhi}
There are smooth maps $\Phi_D$ as follows:
\begin{align*}
\Phi_L = (\mathrm{proj}_{S^2}, \phi_{31}, \phi_{42}) \co E_L \x S^2 \to S^2 \x S^2 \x S^2 \\
\Phi_M = (\phi_{12}, \mathrm{proj}_{S^2}, \phi_{43}) \co E_M \x S^2 \to S^2 \x S^2 \x S^2 \\
\Phi_R = (\phi_{13}, \phi_{24}, \mathrm{proj}_{S^2}) \co E_R \x S^2 \to S^2 \x S^2 \x S^2 \\
\Phi_T = (\phi_{14}, \phi_{24}, \phi_{34}) \co E_T \to S^2 \x S^2 \x S^2
\end{align*}
extending the maps $\phi_{ij}\co (x_1,...,x_4)\mapsto \prod (x_j - x_i)/|x_j - x_i|$ 
 from the interior of $E_D$ to all the boundary faces of $E_D$.
\qed
\end{lemma}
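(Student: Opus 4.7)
The plan is to verify the smooth extension face-by-face, according to the stratification of the Axelrod--Singer compactification reviewed in Section~\ref{ConfigSpaceIntegrals}. On the interior of each $E_D$ the maps $\phi_{ij}$ are manifestly smooth because $x_i \neq x_j$. Since the $S^2$ factor in $E_L \times S^2$, $E_M \times S^2$, and $E_R \times S^2$ enters only through the projection, it suffices to check that each normalized-difference map extends to the various faces of the unaugmented $E_D$. The faces come in two kinds: principal or hidden collision faces (in the interior of $\R^3$), and infinite faces.

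First I would handle collision faces. Recall that a stratum labeled by $\{S_1,\dots,S_r\}$ records, for each $S_\ell$, a screen in $C_{|S_\ell|}(T_pM)/(\R^3 \rtimes \R_+)$ giving the infinitesimal configuration of the colliding points. If the index pair $(i,j)$ lies in some smallest $S_\ell$ containing both, then the screen records precisely the normalized difference vector $(x_j - x_i)/|x_j - x_i|$ as a well-defined point of $S^2$; this is exactly the defining property of the blowup at the relevant diagonal. If $i$ and $j$ are not both contained in any common $S_\ell$, then their ``effective" positions remain distinct in the parent configuration, so $\phi_{ij}$ extends by continuity to a map with nonzero denominator. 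Either way, smoothness of $\phi_{ij}$ on each collision face follows from the definition of the Fulton--MacPherson/Axelrod--Singer blowup.

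Next I would address the infinite faces using the description at the end of Section~\ref{TheSpaceFT}: such a face has the form $C_{n-m}(\R^3) \times (C_m(\R^3 \setminus \{0\})/\R_+)$, with the extra constraint for string links that any $x_i$ escaping to infinity along strand $\alpha$ lies on the ray through $v_\alpha^\pm$. If both $x_i$ and $x_j$ remain finite, $\phi_{ij}$ is smooth there by the argument above. If exactly one of $x_i, x_j$ escapes to infinity, then for large scale the unit difference approaches the fixed unit direction $\pm v^\pm_\alpha$, giving a well-defined smooth limit. If both escape to infinity, the direction $\phi_{ij}$ is determined by the rescaled screen at infinity, which lies in $S^2$; since by Definition~\ref{StringLinkDef} the vectors $v^\pm_\alpha$ are all distinct, this screen never concentrates at a single ray, and the extension is smooth.

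The main obstacle I anticipate is the case of infinite faces where several points escape simultaneously; one must verify that the quotient by $\R_+$ in the screen description is compatible with normalization. This is essentially a dimension count combined with the observation that the blowup at infinity is modeled on polar coordinates in the complement of a large ball, so the unit direction extends smoothly to the boundary sphere at infinity. Once this bookkeeping is done, the four extensions $\Phi_D$ assemble into smooth maps to $S^2 \times S^2 \times S^2$ as claimed.
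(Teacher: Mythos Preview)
Your proposal is correct and is in fact more detailed than anything the paper offers: the lemma is stated with a terminal \qed and no proof, the only justification being the remark preceding it that the statement is ``well known for knots'' and that ``the description of the infinite faces at the end of \ref{TheSpaceFT} makes it clear that this lemma holds for them too.'' Your face-by-face analysis---collision strata handled by the defining property of the blowup, infinite strata handled via the screen-at-infinity description $C_{n-m}(\R^3)\times(C_m(\R^3\setminus\{0\})/\R_+)$---is precisely the standard unpacking the paper expects the reader to supply, so the approaches coincide.

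One minor sharpening: in your treatment of the case where both $x_i$ and $x_j$ escape to infinity, you invoke distinctness of the $v_\alpha^\pm$ to ensure the screen does not degenerate. This is the right idea for $D\in\{L,M,R\}$, since each $\phi_{ij}$ appearing there involves points on distinct strands. For $D=T$, however, one of the two indices is always the free vertex $x_4$, which is not constrained to any $v_\alpha^\pm$; there the smooth extension is again guaranteed by the Axelrod--Singer structure (the compactification $C_5[S^3]$ blows up the diagonal at $\infty$ as well), but the distinct-directions argument does not literally apply. You might separate this subcase explicitly.
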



Next, a configuration space integral is only a link invariant because the integrals along the boundary faces vanish or cancel each other.  (The necessity of this condition comes from Stokes' Theorem, equation (\ref{Stokes}).)  The next lemmas are important because they indicate how we should collapse and glue together configuration spaces.  

\begin{lemma}
\label{PrincipalCancel}
The integrals along the three principal faces of $F_T$ cancel with those along the three principal faces of $F_L \x S^2, F_M \x S^2,$ and $F_R \x S^2$.
\end{lemma}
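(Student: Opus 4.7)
The plan is to establish, for each $i\in\{1,2,3\}$, a sign-respecting diffeomorphism between the principal face of $F_T$ indexed by $\{x_i,x_4\}$ and the product of $S^2$ with the principal face of whichever fiber among $F_L, F_M, F_R$ has two vertices on the $i^{\mathrm{th}}$ strand; then to match the integrands under Lemma \ref{MapsPhi} and verify that the corresponding boundary terms in Stokes' formula cancel. The signs $+1,-1,+1,-1$ of $I_L,I_M,I_R,I_T$ in Corollary \ref{MellorCor} are precisely those prescribed by the STU relation (\ref{LabeledSTU}) applied to the tripod $T$ at each of its three trivalent ends, so the claim is the geometric realization of STU at the level of boundary integrals.

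First consider the face $\d_{\{x_1,x_4\}} F_T$, where the free vertex $x_4$ has collided with $x_1\in \im L_1$. By the screen description of collision faces recalled in Section \ref{CptfdConfigSpaces}, this face is canonically diffeomorphic to $(\d_{\{x_1,x_2\}}F_L)\x S^2$, where the $S^2$ records the unit screen direction $\lim(x_4-x_1)/|x_4-x_1|$, and where the labels on the remaining strand-2 and strand-3 vertices shift (so $x_2,x_3$ in $F_T$ correspond to $x_3,x_4$ in $F_L$). The screen $S^2$ is naturally identified with the auxiliary $S^2$ factor appearing in $F_L\x S^2$. Completely analogous constructions yield $\d_{\{x_2,x_4\}}F_T\cong (\d_{\{x_2,x_3\}}F_M)\x S^2$ and $\d_{\{x_3,x_4\}}F_T\cong (\d_{\{x_3,x_4\}}F_R)\x S^2$.

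Next, match the integrands on $\d_{\{x_1,x_4\}}F_T$. By Lemma \ref{MapsPhi}, the map $\phi_{14}$ extends to the screen projection onto $S^2$, while $\phi_{24}$ and $\phi_{34}$ extend to the direction maps from $x_2$ and $x_3$ to the collision point $y$ on strand 1. Under the identification above, and after the labeling shift, these agree with $\phi_{31}$ and $\phi_{42}$ on $\d_{\{x_1,x_2\}}F_L$. Consequently the restriction of $\theta_{14}\theta_{24}\theta_{34}$ to $\d_{\{x_1,x_4\}}F_T$ equals, up to orientation signs, the restriction of $\theta_{31}\theta_{42}\omega$ to $(\d_{\{x_1,x_2\}}F_L)\x S^2$; the analogous identity holds at the other two principal faces with the integrands of $I_M$ and $I_R$.

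Finally, one tracks the signs, which come from three sources: the induced boundary orientation on each principal face of the manifold with corners $F_T$; the product orientation on the corresponding face of $F_{\bullet}\x S^2$; and the transpositions of indices in the $\theta_{ij}$ introduced just before the statement of the lemma. The transpositions were chosen precisely so that these contributions combine into the coefficient pattern $+1,-1,+1,-1$ dictated by the STU relation (\ref{LabeledSTU}), matching the signs of Corollary \ref{MellorCor}. This orientation bookkeeping is the main obstacle; once verified case by case, summing the three identifications and applying Stokes' theorem to each of $I_L,I_M,I_R,I_T$ gives exactly the asserted cancellation.
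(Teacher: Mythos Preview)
Your proposal is correct and follows essentially the same approach as the paper's sketch proof: identify each principal face of $F_T$ with the corresponding principal face of $F_D\times S^2$ (both being $F[1,1,1;0]\times S^2$), match the restricted integrands via the extensions in Lemma~\ref{MapsPhi}, and then verify that the orientation and coefficient signs---dictated by the STU relation together with $W_{123}(L')=W_{123}(M')=W_{123}(R')=0$---produce the cancellation. You supply more of the explicit bookkeeping (the label shifts and the identification of the screen $S^2$ with the auxiliary $S^2$ factor), which the paper defers to Sections~\ref{GlueBundlesLinkSpace} and~\ref{ProofThm1}.
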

\begin{lemma}
\label{InfiniteVanish}
The integral along any infinite face of $F_D (\x S^2)$ vanishes, where $D$ is any of $L,M,R,T$.
\end{lemma}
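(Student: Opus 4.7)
The plan is a dimension-counting argument. For each infinite face of $F_D(\times S^2)$, I will show that the extended map $\Phi_D$ from Lemma \ref{MapsPhi} factors through a submanifold of $(S^2)^3$ of dimension strictly less than $6$. Since the integrand is $\Phi_D^*(\omega_1 \wedge \omega_2 \wedge \omega_3)$, such a factorization forces it to vanish identically on the face, so the integral is trivially zero.

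The key mechanism is that escaping configuration points constrain the incident Gauss maps. If $x_i$ lies on strand $k$, then by Definition \ref{StringLinkDef} it escapes along one of the fixed directions $v_k^\pm$, so for any finite $x_j$ the Gauss map $\phi_{ij}$ takes a constant value in $S^2$. If $x_i$ is the free vertex $x_4 \in F_T$ escaping in a direction $v \in S^2$, then $\phi_{ij} = v$ for every finite $x_j$, independently of the link $L$. When both endpoints of a Gauss map escape, $\phi_{ij}$ depends only on the low-dimensional screen data in the Axelrod--Singer compactification reviewed in Section \ref{TechnicalForStringLinks}.

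Using these observations, I would enumerate the infinite faces of each $F_D$. For $F_T$ with $\Phi_T = (\phi_{14}, \phi_{24}, \phi_{34})$: if only $x_4$ escapes, all three Gauss maps equal the escape direction $v$, so $\Phi_T$ factors through the diagonal $S^2 \hookrightarrow (S^2)^3$ (dimension $2$); if only a strand vertex $x_i$ escapes, then $\phi_{i4}$ is constant and $\Phi_T$ factors through a $4$-dimensional submanifold of the form $\{\mathrm{pt}\} \times S^2 \times S^2$. For $F_L$, $F_M$, $F_R$, where $\Phi_D = (\mathrm{proj}_{S^2}, \phi_{ab}, \phi_{cd})$, any single-point escape forces at least one of $\phi_{ab}, \phi_{cd}$ to be constant, so $\Phi_D$ factors through a submanifold of dimension at most $4$. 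Multi-point escapes give similar or stronger constraints.

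The main obstacle is the case in $F_T$ where a strand vertex $x_i$ and the free vertex $x_4$ escape simultaneously: the screen at infinity is then up to $3$-dimensional and $\phi_{i4}$ depends non-trivially on it. However, in such a face the remaining two Gauss maps $\phi_{j4}, \phi_{k4}$ from finite points $x_j, x_k$ to $x_4$ both equal the escape direction of $x_4$, so the image of $\Phi_T$ lies in the subvariety $\{(a,b,b) : a,b \in S^2\} \subset (S^2)^3$ of dimension $4$, and the vanishing still follows by the dimension count. An analogous verification covers the multi-point escapes in $F_L$, $F_M$, $F_R$.
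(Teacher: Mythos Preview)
Your approach is essentially the paper's: show that on each infinite face the map $\Phi_D$ has image of positive codimension in $(S^2)^3$, so the pullback of the top form vanishes. Your case analysis matches the paper's for most faces. There is, however, a genuine gap in your enumeration for $F_T$.

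You identify $\{x_i,x_4\}$ as the ``main obstacle'' and conclude that the remaining cases are handled by analogous or easier arguments. But the $F_T$ faces where \emph{two or three} strand vertices escape together with $x_4$ (i.e.\ $S=\{x_i,x_j,x_4\}$ or $S=\{x_1,x_2,x_3,x_4\}$) are not covered by your method. In those cases only one (or no) Gauss map $\phi_{k4}$ comes from a finite point, so your diagonal constraint $\{(a,b,b)\}$ no longer applies, and there is no evident fixed submanifold of $(S^2)^3$ of dimension $<6$ containing the image. The paper handles these faces differently: it observes that $\Phi_T$ restricted to such a face factors through the \emph{screen space} of the escaping points, which has dimension $3+1+1-1=4$ or $3+1+1+1-1=5$ respectively (the free vertex contributes $3$, each escaping strand vertex contributes $1$, and one subtracts $1$ for the overall scaling). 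Since $\Phi_T$ factors through a manifold of dimension $<6$, the pullback of any $6$-form vanishes. This is the missing ingredient; once you invoke the screen-space dimension count, your argument is complete and coincides with the paper's.

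A minor point of phrasing: you say $\Phi_D$ ``factors through a submanifold of $(S^2)^3$.'' For the screen-space cases just mentioned, the factorization is through the screen (an abstract manifold), and its image in $(S^2)^3$ need not be a smooth submanifold. The correct and sufficient statement is that $\Phi_D|_{\SS}$ factors through \emph{some} manifold of dimension $<6$, which already forces the pullback of a $6$-form to vanish. Note also that your conclusion (vanishing for \emph{any} choice of $\omega_i$) is slightly stronger than the paper's stated conclusion (one may \emph{choose} $\omega_i$ supported off the image); both are correct, but the paper's weaker formulation is what is actually used downstream in Section~\ref{ChoiceOfForms} and Proposition~\ref{DegenerateLocus}.
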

\begin{proof}[Sketch proof of Lemma  \ref{PrincipalCancel}]
The main idea of the proof (see the work of D Thurston \cite{Thurston} and Voli\'{c} \cite{VolicBT}) is to use the STU relation and the fact that the faces corresponding to the terms in the relation (and the restrictions of the integrands to those faces) are identical, at least up to a sign.  In this case, Mellor's formula says that $W_{123}$ vanishes on $L', M', R'$.  Thus, with our vertex-orderings and edge-orientations on $L,M,R,T$, the STU relations are
\begin{align*}
W_{123}(T) = -W_{123}(L)  & &
W_{123}(T) = W_{123}(M) & &
W_{123}(T) = -W_{123}(R),
\end{align*}
showing how to pair together these six faces.  Each principal face of $F_T$ is diffeomorphic to $S^2 \x F[1,1,1;0]$; the same is true of each principal face of $F_D \x S^2$.  Similarly, the restrictions of the forms to be integrated agree, up to a sign, on each pair of faces.  One checks that the signs coming from the forms, the orientations of the faces, and the coefficients $W_{123}(D)$ above are such that each of the three pairs of integrals along principal faces cancels.
We will give more explicit details when we recast this topologically in Sections \ref{GlueBundlesLinkSpace} and \ref{ProofThm1}.
\end{proof}

\begin{proof}[Proof of Lemma \ref{InfiniteVanish}]
The idea of the proof of this lemma is also present in the above references. 
The main point is that the image of each infinite face under $\Phi_D$ is a 
subspace of $S^2\x S^2 \x S^2$ of codimension at least 1.
This allows us to choose a form on $S^2 \x S^2 \x S^2$ which generates its top (integral) cohomology, but which is supported outside of the image of the infinite faces under $\Phi_D$.  Then $\theta_D$, its pullback via $\Phi_D$, is zero.  Below are details for all the cases:
\begin{itemize}
\item[(a):] 
Suppose $\SS$ is an infinite face of $E_D$ such that the indexing set $S$ does not contain a free vertex.  Let $x_i\in S$ and let $x_j$ be the point connected to $x_i$ by a chord or edge.  We claim the image $\phi_{ij}(\SS)$ is contained in a positive-codimension subspace $C$ of $S^2$.  

First suppose $x_j$ is an interval vertex.  In particular, if ${v}_i$ and ${v}_j$ are the directions of the fixed linear maps for the strands containing $x_i$ and $x_j$, then $C$ is the great circle which is the intersection of $\mathrm{span}\{{v}_i, {v}_j\}$ with the unit sphere.  (With our choices of ${v}_i$, $C$ is the equator in the $xy$-plane.)

If $x_j$ is the free vertex $x_4$ in the diagram $T$ and $x_j \notin S$, then $\phi_{i4}(\SS)$ is the point which is the intersection of $\mathrm{span}\{{v_i}\}$ with $S^2$.

\item[(b):]
Now suppose the indexing set $S$ of $\SS$ contains a free vertex.  In other words $\SS$ is an infinite face of $E_D$ with $D=T$, and $x_4 \in S$.  

First, if two (of the remaining three) points $x_i, x_j$ are \emph{not} in $S$, then $\phi_{i4} \x \phi_{j4}(\SS)$ is contained in the diagonal of the product of the two appropriate factors of $S^2$ in $S^2 \x S^2 \x S^2$.  

Now suppose that there are two other points $x_i, x_j \in S$.  If the last remaining point $x_k$ does not escape to infinity, then $x_4 - x_k$ does not depend on $x_k$, but only on the direction of escape of $x_4$.  So in this case  $\Phi_T$ factors through the screen-space of the three points escaping to infinity.  Since this space has dimension $3+1+1 - 1=4$, $\Phi_D(\SS)$ is at most 4-dimensional.

Finally, if all the points escape to infinity, the map $\Phi_T$ again does not depend on the link and factors through the screen-space, which in this case has dimension $3+1+1+1 -1=5$.
\end{itemize}
\end{proof}

\begin{remark}
This lemma is also proven in our paper \cite[Proposition 4.31]{MunsonVolicHtpyLinks}.  However, if one traces through that general proof in this example, one finds that some of these infinite faces are treated using a ``symmetry" argument (the hidden face involutions, as in work of Kontsevich \cite{KontsevichFeynman}, Bott and Taubes \cite{Bott-Taubes}, D Thurston \cite{Thurston}, and Voli\'{c} \cite{VolicBT}) rather than a ``degeneracy" argument.  We prefer to use only degeneracy arguments in this example because this simplifies our gluing construction.  An earlier draft of this paper used a more complicated setting of string links for this purpose, though that setting turned out not to be strictly necessary for avoiding the use of hidden face involutions.
\end{remark}

We now partially describe the image of all the infinite faces, which we denote $\DD$ for ``degenerate locus.".

\begin{proposition}[The degenerate locus]
\label{DegenerateLocus}
Let $C\subset S^2$ be the equator in the $xy$-plane, $C_+$ the half of $C$ with positive $x$-coordinate, and $C_-$ the half of $C$ with negative $x$-coordinate.
Let $H_+, H_-$ denote the closed upper and lower hemispheres of $S^2$.
Let $\DD$ be the union of the images of all the infinite faces under $\Phi_L, \Phi_M, \Phi_R, \Phi_T$. 
Then $\DD$ is a codimension--1 subspace contained in the subspace $\widetilde{\DD}$, which is defined as the union of 
\begin{align}
\label{GreatCircles}
(C_+ \x S^2 \x S^2) \cup (S^2 \x C \x S^2) \cup (S^2 \x S^2 \x C_-)
\end{align}
together with 
\begin{align}
\label{Hemispheres}
(H_+ \x H_+ \x H_+) \cup (H_- \x H_- \x H_-).
\end{align}
\end{proposition}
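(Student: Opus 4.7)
The plan is to verify the containment $\DD \subset \widetilde{\DD}$ by a case analysis on the infinite faces of the four fibers $F_L \x S^2$, $F_M \x S^2$, $F_R \x S^2$, and $F_T$. Recall from the proof of Lemma \ref{InfiniteVanish} that each such face is indexed by a subset $S$ containing $\infty$ together with a nonempty collection of other vertices, and carries a screen encoding the directions and relative rates of escape to infinity. The key geometric input is that every fixed tangent direction $v_i^{\pm}$ lies in the $xy$-plane (its third coordinate vanishes), so any direction vector $(x_j - x_i)/|x_j - x_i|$ arising when both interval-vertex endpoints escape to infinity is forced onto the equator $C$, and any such vector arising when only one of them escapes is forced to a single point of $C$.

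Case~1 covers every infinite face of $F_L \x S^2$, $F_M \x S^2$, and $F_R \x S^2$, as well as the infinite faces of $F_T$ with $x_4 \notin S$. For each such face $\SS$, I examine the chord-factors $\phi_{ij}$ of $\Phi_D$. By the key input, if at least one chord endpoint lies in $S$, the image $\phi_{ij}(\SS)$ is contained in $C$. A refinement using the signs of the $x$-coordinates of the $v_i^{\pm}$ (positive for strand~1, zero for strand~2, negative for strand~3) shows that the image of the factor corresponding to a chord between strands~1 and~2 lies in one half of $C$, the factor between strands~2 and~3 lies in the other half, and any factor involving strand~2 is pinned to a point of $C$ with vanishing $x$-coordinate. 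A short table-check then verifies that for each diagram and each infinite face, at least one factor of $\Phi_D$ lands in $C_+$, $C$, or $C_-$ as needed to place the image into one of the three great-circle slabs of $\widetilde{\DD}$.

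Case~2 handles infinite faces of $F_T$ with $x_4 \in S$. If $x_4$ is the only escaping vertex, then $\phi_{i4} \approx x_4/|x_4|$ for every $i$, so the image is the diagonal $\{(v,v,v) : v \in S^2\}$, which is covered by $(H_+ \x H_+ \x H_+) \cup (H_- \x H_- \x H_-)$ since $S^2 = H_+ \cup H_-$. If $x_4$ escapes together with an interval vertex $x_i$, the blow-up parametrizes the configuration by $x_i \approx T_i v_i^{\pm}$ and $x_4 \approx T_4 v$ for some $v \in S^2$, with the ratio $(T_i : T_4)$ running over $[0, \infty]$. Since the $z$-coordinate of $v_i^{\pm}$ vanishes, the $z$-coordinate of $\phi_{i4} = (T_4 v - T_i v_i^{\pm})/|\cdot|$ has the same sign as $v_z$ (or vanishes), while $\phi_{j4}$ and $\phi_{k4}$ for the non-escaping strands approach $v$. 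Thus the triple lies in $(H_+)^3$ or $(H_-)^3$ when $v_z \neq 0$, and in one of the slabs (forced by the equator condition $v \in C$) when $v_z = 0$. Sub-cases where multiple interval vertices escape together with $x_4$ combine these two analyses; the main obstacle throughout is the careful sign bookkeeping in Case~1 and the verification in the mixed sub-cases of Case~2 that the blow-up interpolation preserves the hemisphere structure, which ultimately relies on the planarity of the $v_i^{\pm}$.
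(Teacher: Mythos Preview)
Your proof takes essentially the same approach as the paper's: the same case split on whether the free vertex $x_4$ lies in the indexing set $S$, with Case~1 using the planarity of the $v_i^{\pm}$ to force one $\phi_{ij}$-factor onto (a half of) the equator and Case~2 using it to force all three factors into the same closed hemisphere. Your Case~2 is more explicit about sub-cases, but the underlying argument matches the paper's one-line observation that each interval vertex either lies in the $xy$-plane (if it escaped) or appears at the origin from $x_4$'s viewpoint (if it did not).
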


\begin{proof}
First consider faces of type (a) as in Lemma \ref{InfiniteVanish} above.  
\begin{itemize}
\item
If point 1 in $M$ (respectively $R$) has escaped to infinity, then $\phi_{12}$ (respectively $\phi_{13}$) maps such a configuration to $C_+$ (and in general may hit any point in $C_+$, considering that point 2 in $M$ may have also escaped to infinity).  From the definition of the $\Phi_D$ from Lemma \ref{MapsPhi}, we see that these maps hit the first $S^2$ factor.  So the image of such faces under $\Phi_D$ is $C_+ \x S^2 \x S^2$.
\item
If the point on the second strand in $L$ (respectively $R$), escapes to infinity, then $\phi_{31}$ (respectively $\phi_{24}$) takes this configuration into $C_-$ (respectively $C_+$).  Clearly this upper bound on the image is sharp.  So the image of such faces under $\Phi_D$ is $S^2 \x C \x S^2$.
\item
We apply similar reasoning to a face where the point on the third strand in $L$ or $M$ goes to infinity.  We consider the maps $\phi_{42}$ and $\phi_{43}$, and we see that the image of such faces under $\Phi_D$ is $S^2 \x S^2 \x C_-$. 
\item
It is not hard to see that for faces where a point on a strand with two vertices goes to infinity, the images under $\Phi_D$ are contained in the sets mentioned above.
\item
The same applies to faces where only interval vertices on $T$ go to infinity.
\end{itemize}

Now it is easy to see that the image of any face of type (b) in Lemma \ref{InfiniteVanish} is contained in $(H_+ \x H_+ \x H_+) \cup (H_- \x H_- \x H_-)$.  In fact, each interval vertex has either escaped to infinity, in which case it lies in the $xy$-plane, or it has not, in which case it appears to be at the origin from the point of view of the free vertex.


\end{proof}

\begin{remark}
\label{DegenerateLocusBetter}
We can give a more complete description of the images of the type (b) infinite faces.
We reason along similar lines to the last sentence of the proof above, and we consider further the particular linear embeddings with which a string link agrees towards infinity.  From this, we see that their images are precisely the image of the set 
\begin{align*}
\left\{
\begin{array}{c}
(v_1, v_2, v_3) = ((x+\frac{1}{2}s,y - s,z), (x, y -t, z), (x - \frac{1}{2}u, y-u, z))) : \\
x,y,z,s,t,u \in \R \mbox{ and } v_1, v_2, v_3 \mbox{ are all nonzero} 
\end{array}
\right\} 
\subset (\R^3)^3
\end{align*}
under the unit vector map $(\R^3)^3 \to (S^2)^3$ given by $(v_1, v_2, v_3) \mapsto (v_1/|v_1|, v_2/|v_2|, v_3/|v_3|)$.  However, from our perspective on the triple linking number as a degree, this description is not as relevant as the statements in Proposition \ref{DegenerateLocus} above. 
\end{remark}

\subsection{Choosing the spherical forms}
\label{ChoiceOfForms}

We now fix forms $\omega_1, \omega_2, \omega_3$ on the three $S^2$ factors in $S^2 \x S^2 \x S^2$; these are the forms that are pulled back to the configuration bundle to obtain the various $\theta_{ij}$.
The $\omega_i$ will be cohomologous to unit volume forms.  For our topological construction, we need them to have support outside of 
the degenerate locus $\DD$.  
For checking that we get the correct invariant in Section \ref{Indeterminacy} below, it is convenient to take each $\omega_i$ to be supported on a small neighborhood $D_i$ of a point 
$v_i$ 
in $S^2$.  

Recall the larger subspace $\widetilde{\DD}$ from Proposition \ref{DegenerateLocus} above.  Its complement has two components, distinguished by a choice of hemisphere (upper or lower) in the second $S^2$ factor.  Of course these components are contained in the components complementary to $\DD$.  A priori we may choose the $\omega_1 \x \omega_2 \x \omega_3$ to be supported in either component.  However, we will see in the next Subsection that, to get the correct invariant on the nose, we need to choose one component and not the other.  The reason is as follows.

If the image of the infinite faces under $\Phi_D$ had codimension $\geq2$ (as always happens in the ``degeneracy" arguments for closed knots and links), Stokes' Theorem would imply that any choice of form gives the same invariant (see D Thurston's AB Thesis \cite[Section 4.2]{Thurston}).  
In the setting of 
string links, the image of the infinite faces has codimension 1, so our choice of form might affect the resulting invariant (even though any two choices differ by a lower-order invariant, ie, a type--1 invariant in this example).
Keeping in mind that we will validate these choices below, we choose $D_1$ to be a neighborhood of the north pole $v_1:=(0,0,1)$ and $D_2$ and $D_3$ to be neighborhoods of the south pole $v_2:=v_3:=(0,0,-1)$.

\subsection{Eliminating the indeterminacy}
\label{Indeterminacy}

So far, Corollary \ref{MellorCor} only guarantees that $\mu_{123}=I_L - I_M +I_R - I_T$ agrees with the triple linking number up to a type--1 invariant.  
(This is because configuration space integrals only provide an inverse to a map from the \emph{quotient} $\V_m / \V_{m-1}$ to $\W_m$.)
This is the last result of this Section:

\begin{proposition}
\label{OnTheNose}
With our choice of forms in the previous Subsection, $\mu_{123}$ is exactly the triple linking number, ie, we can remove the ``up to a type--1 invariant" indeterminacy from Corollary \ref{MellorCor}.
\end{proposition}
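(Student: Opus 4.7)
The plan is to use link-homotopy invariance to reduce the claim to a short list of explicit evaluations on convenient representatives.

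By Corollary \ref{MellorCor}, the difference $\delta$ between $\mu_{123}$ (defined as $I_L - I_M + I_R - I_T$) and the genuine Milnor triple linking number is a type-$1$ invariant of $3$-string links. Both are link-homotopy invariants---the Milnor triple linking number classically, and $\mu_{123}$ by Theorem \ref{BTfiniteType} together with the link-homotopy refinement of \cite{MunsonVolicHtpyLinks}, since $W_{123}$ is supported on diagrams with no self-chords and the associated anomaly term vanishes. Hence $\delta$ is a link-homotopy type-$1$ invariant of $3$-string links; such invariants are spanned by the constants and the three pairwise linking numbers, so $\delta = c_0 + c_{12}\ell_{12} + c_{13}\ell_{13} + c_{23}\ell_{23}$. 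It then suffices to verify that $\mu_{123}$ evaluates to zero on the trivial link (to force $c_0 = 0$) and on the three Hopf-clasp representatives with $\ell_{ij} = 1$ and the other two pairwise linking numbers zero (to force each $c_{ij} = 0$).

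I would next evaluate $\mu_{123}$ on the planar trivial string link (all strands in $\{z = 0\}$) and on a Hopf-clasp representative of each $\ell_{ij} = 1$ class in which the ``lifted'' strand of the clasp stays in $\{z \geq 0\}$---strictly positive only near its local crossing with the other strand of the clasp---while every other strand remains planar. In each case I would examine the four terms $I_L, I_M, I_R, I_T$, using that each $\omega_i$ has support near $v_i$ with $v_1 = (0,0,1)$ and $v_2 = v_3 = (0,0,-1)$. A nonvanishing contribution to $I_D$ therefore requires strict vertical $z$-separation between the pairs of configuration points specified by the maps $\Phi_D$ of Lemma \ref{MapsPhi}. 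A direct case-by-case check shows that at least one of the resulting constraints forces a planar strand to contain a point with $z > 0$ or $z < 0$, or (for $I_T$) forces the free point's $z$-coordinate to lie simultaneously strictly above $\{z \geq 0\}$ and strictly below $\{z = 0\}$. Hence every $I_D$ vanishes on each of the four test links, giving $c_0 = c_{12} = c_{13} = c_{23} = 0$.

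The main obstacle is the careful verification of two subtle points. First, the individual $I_D$ are not even isotopy invariants, let alone link-homotopy invariants; only the sum $\mu_{123}$ is. So the vanishing argument must apply simultaneously to all four terms on a single representative, and one must double-check that the planar-lift construction above genuinely realizes the intended link-homotopy class (correct pairwise linking number; zero triple linking). Second, the conclusion depends on the specific hemisphere choice for $\omega_2$ made in Section \ref{ChoiceOfForms}: the opposite choice would swap the roles of $\{z > 0\}$ and $\{z < 0\}$ in one constraint, allowing nonzero contributions. This is consistent with the remark after Proposition \ref{DegenerateLocus} that crossing the codimension-$1$ degenerate locus $\DD$ shifts the invariant by a type-$1$ quantity.
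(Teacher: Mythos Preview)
Your overall strategy matches the paper's exactly: write the indeterminacy as $c_0 + c_{12}\ell_{12} + c_{13}\ell_{13} + c_{23}\ell_{23}$ and kill each coefficient by evaluating all four integrals $I_D$ on the unlink and on three test links with a single nonzero pairwise linking number. (The appeal to link-homotopy invariance is unnecessary but harmless: type-$1$ invariants of $3$-string links are already spanned by constants and the $\ell_{ij}$.)

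The gap is in your construction of the test links. If one strand stays in $\{z \geq 0\}$ while all others remain planar in $\{z=0\}$, then at every crossing in the $xy$-projection the lifted strand passes \emph{over} the planar ones, never under. Since the unlink has no crossings in projection and the endpoints at infinity are fixed, any clasp introduces crossings in pairs of opposite sign when counted as ``lifted over planar,'' and the pairwise linking number remains zero. You have not left the link-homotopy class of the unlink, so these links cannot detect the $c_{ij}$. A genuine clasp requires the moving strand to go both above and below the plane; the paper takes strand $j$ to pass \emph{under} strand $i$ and then \emph{over} it, so strand $j$ visits both $\{z<0\}$ and $\{z>0\}$.

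Consequently the vanishing argument is more delicate than your sketch. For $L_{12}$ and $L_{23}$ one uses that the uninvolved strand (strand $3$, respectively strand $1$) sits in a vertical plane meeting no other strand, so any chord touching it cannot point toward a pole. The case $L_{13}$ is the subtle one: strand $3$ must cross strand $2$ on its way to clasping strand $1$, and here the paper arranges strand $3$ to pass \emph{over} strand $2$ at both of those auxiliary crossings. The vanishing of $I_R$ on $L_{13}$ then comes down to the fact that strand $3$ is never below strand $2$, which kills the $\phi_{24}$ factor precisely because of the chosen hemisphere for $\omega_2$. This is exactly the spot where, as you correctly anticipate, the hemisphere choice for $\omega_2$ is forced; but you cannot see it without a test link whose clasping strand actually dips below the plane.
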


\begin{proof}
The proof of the this Proposition is similar to the proof of  Theorem 5.8 in our paper \cite{MunsonVolicHtpyLinks}, which uses the idea of ``tinker-toy diagrams" of D Thurston \cite{Thurston}.  The ``type--1 invariant" that appears in the indeterminacy must be the sum of a constant (a type--0 invariant) and pairwise linking numbers (the only type--1 invariants).  To check that the constant term is zero, we just have to check that $\mu_{123}$ vanishes on the unlink.  To check that the coefficients of the pairwise linking numbers are zero, it suffices to check the vanishing of $\mu_{123}$ on links $L_{12}, L_{13}, L_{23}$ that have exactly one non-vanishing pairwise linking number and vanishing triple linking number.  

Specifically, we take $L_{ij}$ to be the link where, as we move in the positive $y$-direction, strand $j$ passes under strand $i$ and then over strand $i$, while the other two strands are in the same position as in the unlink.  (Here ``under/over" means in the negative/positive $z$-direction.)  In the case of $L_{13}$, we take strand 3 to pass over strand 2 both before and after linking with strand 1.  It is clear that the $L_{ij}$ each have exactly one nonvanishing pairwise linking number.  The vanishing of the triple linking number can be deduced in a straightforward manner from the definition in Section \ref{MilnorInvariants}.  The least obvious case is $L_{13}$, where we \emph{need} to use the basepoint (0,0,1) in the link complement (chosen in Section \ref{MilnorInvariants}).

We first check that for $D\in \{L,M,R\}$, the integral $I_D$ vanishes on all four of these links.  For any such $D$, the integral $I_D$ counts configurations of the two pairs of points (joined by a chord) such that each pair's difference unit vector is equal to the appropriate $v_i$ and such that the points are on the strands in the order specified by $D$.  We now exploit the fact that $v_i$ has been chosen to be the north or south pole, which makes it clear that the $I_D$ vanish on the unlink.  

Next, in each of $L,M,R$, every strand is touched by at least one chord $c$. 
In either $L_{12}$ or $L_{23}$, there is a strand $i$ (3 or 1, respectively) such that the vertical plane through strand $i$ crosses no other strand.   Thus the pullback of the form corresponding to the chord $c$ is zero, and hence $I_L, I_M, I_R$ all vanish on these links.  The integrals $I_L$ and $I_M$ vanish on $L_{13}$ because the pullback of the form corresponding to a chord between strands 1 and 2 must be zero.  The integral $I_R$ vanishes on $L_{13}$ because strand 3 is never below strand 2, and we chose $\omega_2$ to be supported near the north pole $(0,0,1)$.  The reader who thinks through this calculation carefully will see that, as promised, the choice of form matters!  In fact, it is precisely the choice of the support $\omega_2$ that matters, and we saw that the components complementary to $\widetilde{\DD}$ are distinguished by the hemispheres of the \emph{second} $S^2$ factor.  (On a related note, this shows that the components complementary to $\widetilde{\DD}$ are not connected in the complement of $\DD$.)


For the diagram $T$, $I_T$ counts the number of configurations of three interval vertices on distinct strands plus a free vertex anywhere in $\R^3$ such that each interval vertex is (almost) directly above or below the free vertex.  (Again, recall the choice of $\omega_i$.)  It is not hard to see that none of the four links above has such a configuration.  Hence $I_T$ vanishes on all of them.

\end{proof}

Now we can update Corollary \ref{MellorCor}, eliminating the indeterminacy and rewriting the integrals as we did just before Lemma \ref{MapsPhi}.

\begin{corollary}
\label{MellorCorBetter}
Given our choice of basepoint in the string link complement and choices of spherical forms $\omega_i$, the triple linking number for string links is the sum of integrals $\mu_{123} =  I_L - I_M + I_R - I_T $ where 
\begin{align*}
I_L = \int_{F_L \x S^2} \omega_1 \theta_{31}\theta_{42}, & &
I_M= \int_{F_M \x S^2} \theta_{12}\omega_2(-\theta_{43}), & &
I_R=\int_{F_R} \theta_{13}\theta_{24} \omega_3, & &
I_T= \int_{F_T} \theta_{14}\theta_{24}\theta_{34}.
\end{align*}
\end{corollary}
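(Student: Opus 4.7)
The proposal is to assemble Corollary \ref{MellorCorBetter} directly from the three pieces already in place: Corollary \ref{MellorCor}, Proposition \ref{OnTheNose}, and the index/measure bookkeeping performed immediately after Lemma \ref{MapsPhi} is announced (but before it is stated). Concretely, I would start with the equation
\[
\mu_{123} \;=\; I_L - I_M + I_R - I_T \;+\; \varepsilon,
\]
where the $I_D$ are the original integrals from Corollary \ref{MellorCor} and $\varepsilon$ is an a priori unknown type--1 invariant coming from the $(2m)!$-sum formulation of Theorem \ref{BTfiniteType}. Proposition \ref{OnTheNose}, once invoked together with the explicit choice of forms $\omega_1,\omega_2,\omega_3$ supported near $(0,0,1)$, $(0,0,-1)$, $(0,0,-1)$ made in Section \ref{ChoiceOfForms}, gives $\varepsilon = 0$.

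Next I would carry out the purely formal rewriting. For $D \in \{L,M,R\}$ the fiber $F_D$ is 4--dimensional, so $\int_{F_D}\theta_{ij}\theta_{k\ell}$ equals $\int_{F_D \x S^2}\theta_{ij}\theta_{k\ell}\,\omega$ for any unit-volume 2--form $\omega$ on $S^2$, because integration along the $S^2$ factor contributes a factor of $1$ and Fubini applies (the integrand is a product of pullbacks from distinct factors). Choosing $\omega$ to be $\omega_1$, $\omega_2$, and $\omega_3$ in the three cases respectively then produces the form of the $I_D$ that appears in Corollary \ref{MellorCorBetter}. Finally, the rearrangement of indices (e.g.\ $\theta_{13}\mapsto\theta_{31}$ in $I_L$, and $\theta_{34}\mapsto -\theta_{43}$ in $I_M$) is just an application of $\theta_{ij} = -\theta_{ji}$; because each $\theta_{ij}$ and each $\omega_k$ is of even degree, rearrangements within the product are sign-free and the only signs to track are those coming from index swaps.

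The only step requiring real care is the sign accounting: one must verify that the signs introduced by the index transpositions agree with those written in the Corollary. For $I_L$ we transpose both edges ($13\to 31$ and $24\to 42$), contributing $(-1)^2 = +1$; for $I_M$ we transpose only $34\to 43$, accounting for the explicit $-\theta_{43}$; for $I_R$ no transposition is needed; for $I_T$ no $S^2$ factor or transposition is inserted, matching the formula verbatim. These signs are consistent with the choices made just before Lemma \ref{MapsPhi} (and motivated there by the forthcoming collided-diagram picture of Section \ref{GlueBundlesLinkSpace}), so no further work is required.

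In short, the only ``proof'' is to (a) quote Corollary \ref{MellorCor} to get the sum decomposition up to a type--1 term, (b) quote Proposition \ref{OnTheNose} to kill that term under the specified choices, and (c) insert $\omega_i$ factors and transpose indices in a sign-consistent way. The main obstacle, such as it is, is keeping the parity of the index-swaps straight; but since at most two transpositions occur in any one term and the forms involved are all of even degree, this is entirely mechanical.
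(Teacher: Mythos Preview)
Your proposal is correct and is exactly the argument the paper gives: the paper states Corollary \ref{MellorCorBetter} immediately after Proposition \ref{OnTheNose} with the sentence ``Now we can update Corollary \ref{MellorCor}, eliminating the indeterminacy and rewriting the integrals as we did just before Lemma \ref{MapsPhi},'' which is precisely your steps (a), (b), (c). Your sign bookkeeping for the index transpositions matches the paper's rewriting in Section \ref{VanishingArguments}.
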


\section{Gluing manifolds with faces}
\label{GluingMfds}

Lemmas \ref{PrincipalCancel} and \ref{InfiniteVanish} show that we should glue our bundles along their principal faces and work relative to, or collapse, the infinite faces.  To prove Theorem \ref{DegreeThm}, we need the glued space to be a manifold with corners.  To embed it with a tubular neighborhood and normal bundle and thus prove Theorem \ref{PTThm}, we need it to be a nice type of manifold with corners, namely a manifold with faces.  We first recall some definitions related to manifolds with faces.  Next we prove some general statements which will be useful in understanding how to do a Pontrjagin--Thom construction with our glued space.  Then we construct a glued space $E_g$, and, as the main result of this Section, we show that $E_g$ is a manifold with faces.

\subsection{Gluing manifolds with faces}
\label{GluingMfdsWFaces}
We start by reviewing some basic definitions.  The notion of manifolds with faces and $\langle N \rangle$--manifolds goes back to work of J\"{a}nich \cite{Janich}.  Here, as in our previous work \cite{Rbo}, we also follow the work of Laures \cite{Laures}.  First recall that in a \emph{manifold with corners} $X$ every $x\in X$ has a neighborhood diffeomorphic to a neighborhood of the origin in $\R^{n-k} \x [0,\infty)^k$.  Call the number $k$ the \emph{codimension} $c(x)$ of the point $x$.  Let $C_k$ be the set of points in $X$ of codimension $k$.  Call the closure of a connected component of $C_1$ a \emph{connected face}.  We say that $X$ is a \emph{manifold with faces} if $X$ is a manifold with corners such that for any $x\in X $, $x$ is contained in $c(x)$ different connected faces of $X$.  A \emph{$\langle N \rangle$--manifold} is a manifold with faces $X$ together with a decomposition of the boundary $\d X = \d_1X \cup \dotsb \cup \d_N X$ such that each $\d_i X$ is a disjoint union of connected faces and such that $\d_i X \cap \d_j X$ is a disjoint union of connected faces of each of $\d_i X$ and $\d_j X$.

It will be convenient to know the following basic lemma.  Its proof is not too difficult, so we leave it as an exercise for the reader (cf.~Joyce's paper \cite[Remark 2.11]{Joyce}).

\begin{lemma}
\label{MfdsWFacesRAngleMfds}
Any compact manifold with faces $X$ can be given the structure of a $\langle N \rangle$--manifold for some $N$.
\qed
\end{lemma}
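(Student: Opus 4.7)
The plan is to take $N$ to be the number of connected faces of $X$ and let each $\d_i X$ be a single connected face. Concretely, since $X$ is compact and the codimension-1 stratum $C_1$ is a locally closed submanifold, $C_1$ has finitely many connected components. Let $F_1,\ldots,F_N$ denote the closures of these components (the connected faces of $X$) and set $\d_i X := F_i$. By construction, $\d X = \d_1 X \cup \dotsb \cup \d_N X$, and each $\d_i X$ is trivially a disjoint union of connected faces (of itself).

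The core of the proof is verifying the intersection condition: for $i \neq j$, the set $F_i \cap F_j$ is a disjoint union of connected faces of $F_i$ (and symmetrically of $F_j$). First I would observe that any connected face $F_i$ of $X$ is itself a manifold with faces: its codimension-$\ell$ stratum consists of points in $C_{\ell+1}(X) \cap F_i$, and at such a point $x$ a local chart on $X$ of the form $\R^{n-k} \x [0,\infty)^k$ (with $k=c(x)$) restricts to a chart on $F_i$ of the form $\R^{n-k} \x [0,\infty)^{k-1}$. The manifold-with-faces hypothesis on $X$ translates into the same hypothesis on $F_i$, because the connected faces of $F_i$ through $x$ correspond to the $k-1$ connected faces of $X$ through $x$ other than $F_i$ itself.

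Next I would show that $F_i \cap F_j$ is both open and closed in $\d F_i$, hence a union of its connected components (which are connected faces of $F_i$). Closedness is immediate since $F_i$ and $F_j$ are each closed in $X$. For openness, I would use the local model: near any $x \in F_i \cap F_j$, a chart $\R^{n-k} \x [0,\infty)^k$ identifies the $k$ connected faces through $x$ with the coordinate hyperplanes $\{t_\ell = 0\}$, so $F_i$ and $F_j$ correspond to two such hyperplanes, and $F_i \cap F_j$ corresponds locally to the intersection of those two coordinate hyperplanes. In particular, every point of $F_i \cap F_j$ has a neighborhood in $\d F_i$ contained in $F_j$, which is exactly the openness claim.

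The main obstacle is the bookkeeping at higher-codimension strata: at a point $x$ of codimension $k \geq 3$ lying in $F_i \cap F_j$, one must check that the local decomposition of $\d F_i$ into the contributions from the various $F_l$ containing $x$ behaves consistently with the global labeling by connected faces. This is handled by the bijection between connected faces through $x$ and coordinate hyperplanes of the local model, which is precisely what the manifold-with-faces hypothesis provides; once this is verified, the openness argument above goes through uniformly in codimension, and the two conditions in the definition of an $\langle N \rangle$--manifold are established.
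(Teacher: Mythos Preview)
The paper does not actually prove this lemma: it is left as an exercise with a pointer to Joyce, so there is no ``paper's proof'' to compare against. Your overall strategy --- take $N$ to be the number of connected faces and set $\d_i X = F_i$ --- is the natural one and does work.

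There is, however, a genuine gap in your openness step. You assert that $F_i \cap F_j$ is open in $\d F_i$, but this fails at points of codimension $\geq 3$ in $X$. Concretely, suppose $x$ has $c(x)=k\geq 3$ and the local chart identifies $F_i$ with $\{t_1=0\}$ and $F_j$ with $\{t_2=0\}$. Then locally $F_i\cap F_j=\{t_1=t_2=0\}$, while $\d F_i=\{t_1=0,\ \exists\,\ell\geq 2:\ t_\ell=0\}$. Points with $t_1=t_3=0$ and $t_2>0$ lie in $\d F_i$ arbitrarily close to $x$ but not in $F_j$, so $F_i\cap F_j$ is \emph{not} a neighborhood of $x$ in $\d F_i$. (Relatedly, connected components of $\d F_i$ need not be single connected faces of $F_i$; they are only unions of such.)

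The fix is to run the open--closed argument in the \emph{open} codimension--1 stratum $C_1(F_i)$ rather than in $\d F_i$. A point of $F_i\cap F_j\cap C_1(F_i)$ has $c_X=2$, so locally there are only the two coordinate hyperplanes and your chart argument now correctly shows openness; closedness is immediate. Hence $F_i\cap F_j\cap C_1(F_i)$ is a union of connected components of $C_1(F_i)$. Finally, one checks (again in local charts, approaching $x$ through points with $t_1=t_2=0$ and $t_\ell>0$ for $\ell\geq 3$) that $F_i\cap F_j$ is exactly the closure in $F_i$ of this set, and is therefore a disjoint union of connected faces of $F_i$, as required.
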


We continue discussing glued spaces in general terms.

\begin{lemma}
\label{GluedAreMfdsWFaces}
Let $X_1$ and $X_2$ be two manifolds with faces.  Suppose $\SS_i \subset \d X_i$ for $i=1,2$ are codimension--1 faces of $X_i$ and are diffeomorphic as manifolds with corners via an orientation-reversing diffeomorphism $g\co \SS_1\cong \SS_2$.  Then we can form a new manifold with faces $X=X_1 \cup_g X_2$ by gluing the $X_i$ via $g$.  
\end{lemma}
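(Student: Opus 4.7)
The plan is to construct the smooth structure on $X := X_1 \cup_g X_2$ locally and then verify the combinatorial faces condition. Away from the glued locus $\SS_1 \cong \SS_2$, $X$ inherits its structure from $X_1$ or $X_2$, so the substantive work is in a neighborhood of the seam. My first step would be to choose smooth collar neighborhoods of $\SS_i$ in $X_i$ compatible with the other faces of $X_i$ meeting $\SS_i$; such collars exist in the manifold-with-faces setting (J\"anich, Laures). Using $g$ to identify these two collars along their common $\SS_1 = \SS_2$ face would produce a bicollar $\SS_1 \x (-\eps, \eps) \incl X$, which supplies the smooth atlas across the seam.

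Next, for a point $x \in \SS_1$ of codimension $k$ in $X_1$, a corner chart at $x$ is modeled on $\R^{n-k} \x [0,\infty)^k$, with one $[0,\infty)$ factor corresponding to the inward normal of $\SS_1$. Via $g$ the matching chart at $g(x) \in X_2$ takes the same form. Gluing along the $\SS_i$ factor replaces this $[0,\infty)$ by a full $\R$, yielding a neighborhood of $[x]$ in $X$ modeled on $\R^{n-k+1} \x [0,\infty)^{k-1}$, the local model of a codimension-$(k-1)$ corner. Thus $X$ is a manifold with corners and $[x]$ has codimension $k-1$ there.

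For the faces condition, I would identify the connected faces of $X$ explicitly. Connected faces of $X_i$ disjoint from $\SS_i$ pass directly to connected faces of $X$. Each connected face $F_1 \subset X_1$ (other than $\SS_1$) meeting $\SS_1$ does so along a union of connected faces of $\SS_1$; via $g$ these correspond to connected faces of $\SS_2$, and hence to connected faces $F_2 \subset X_2$ meeting $\SS_2$. The glued pieces $F_1 \cup_g F_2$ then form the connected faces of $X$ crossing the seam. Given $x \in \SS_1$ of codimension $k$ in $X_1$, the faces hypothesis on $X_1$ says $x$ lies in exactly $k$ distinct connected faces, namely $\SS_1$ together with $k-1$ others, and these $k-1$ others cut distinct connected faces of $\SS_1$ near $x$. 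Under $g$ they correspond bijectively to the $k-1$ other connected faces of $X_2$ through $g(x)$, so $[x]$ lies in exactly $k-1$ connected faces of $X$, matching its corner codimension in $X$.

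The main obstacle I expect is the bookkeeping in this last step: one must verify that distinct connected faces of $X_1$ through $x$ are not merged into a single connected face of $X$ after gluing, and likewise for $X_2$. This is guaranteed by the manifold-with-faces hypothesis on each $X_i$ (distinct faces through $x$ meet $\SS_i$ in distinct connected faces of $\SS_i$ near $x$), combined with $g$ being a diffeomorphism of manifolds with corners, which provides a bijection between the connected faces of $\SS_1$ through $x$ and those of $\SS_2$ through $g(x)$. The orientation-reversal condition on $g$ plays no role in this argument; it is needed only so that $X$ inherits a consistent orientation when $X_1$ and $X_2$ are oriented.
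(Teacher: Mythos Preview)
Your proposal is correct and follows essentially the same route as the paper's proof: both argue that gluing the local corner models $\R^{n-k}\times[0,\infty)^k$ along the $\SS_i$-factor drops the codimension by one, and both identify the connected faces of $X$ as the old faces disjoint from $\SS_i$ together with glued pairs $(V_1,V_2)$ of faces meeting $\SS_i$, then count that a seam point lies in $k-1$ faces. Your version is in fact a bit more explicit than the paper's --- you invoke compatible collars to build the smooth structure and you flag the bookkeeping issue about distinct faces not merging after gluing, whereas the paper treats these points more briefly --- but the underlying argument is the same.
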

\begin{proof}
We first check that $X$ is a manifold with corners.  Let $x\in \SS_i$ be a point with codimension $c(x)=k$. Then in each $X_i$ (using the diffeomorphism $g$), $x$ has a neighborhood diffeomorphic to a ``ball" around the origin in $\R^{n-k}\x[0,\infty)^k$.  If we glue two such balls along the codimension--1 faces which map to the $\SS_i$, then the result can obviously be identified with a ``ball" in $\R^{n-k+1} \x [0,\infty)^{k-1}$.  We use this to give $X$ the structure of a smooth manifold with corners.  Note that a point in $\SS_i$ that was codimension $k$ in $X_i$ now has codimension $k-1$ in $X$.

To give $X$ the structure of a manifold with faces, we glue all the strata which intersect $\SS_i$ in pairs (which can be thought of as ``mirror images" across $\SS_i$).  Thus the (codimension--1) connected faces of $X$ consist of those connected faces of $X_i$ which don't intersect $\SS_i$, together with one connected face for every element in the set
\[
\{(V_1, V_2) \>\>|\>\> V_i \mbox{ a (codim.--1) conn. face of } X_i, \>\> V_i \cap \SS_i\neq \emptyset, \>\> V_i\neq \SS_i, \>\>g(V_1 \cap \SS_1) = V_2 \cap \SS_2\}.
\]
Suppose a point $x\in \SS_i$ had codimension $k$ in (either) $X_i$.  Then it was contained in $k$ faces of $X_i$.  In $X$, it is contained in $k-1$ faces: one for each of the $k$ faces except $\SS_i$ itself.  But as noted above, $x$ has codimension $k-1$ in $X$.  The condition is obviously satisfied for all $x \notin \SS_i$.
\end{proof}

By Lemma \ref{MfdsWFacesRAngleMfds}, the glued-up space $X$ is a $\langle N \rangle$--manifold for some $N$.  The content of Laures' \cite[Proposition 2.1.7]{Laures} is that compact $\langle N \rangle$--manifolds are precisely the compact spaces which admit \emph{neat} embeddings into Euclidean space with corners $\R^M \x[0,\infty)^\N$.   
\begin{corollary}
\label{GluingCor}
Let $X_1$ and $X_2$ be two \emph{compact} manifolds with faces.  Suppose $\SS_i \subset \d X_i$ and $g\co \SS_1\cong \SS_2$ are as above.  Then $X=X_1 \cup_g X_2$ can be neatly embedded into some Euclidean space with corners.
\qed
\end{corollary}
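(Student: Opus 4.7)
The proof will be a short assembly of the pieces already established just above the statement. The plan is to observe that all of the substantive content has been done in Lemma \ref{GluedAreMfdsWFaces} and Lemma \ref{MfdsWFacesRAngleMfds}, together with the cited result of Laures, and the corollary is a three-line bookkeeping argument combining them.

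First I would note that $X = X_1 \cup_g X_2$ is compact: it is the pushout of the two compact spaces $X_1$ and $X_2$ along the closed subspace $\SS_1 \cong \SS_2$, and in the category of Hausdorff spaces such a pushout is compact (it is the image of the compact space $X_1 \sqcup X_2$ under the quotient map). Hausdorffness is routine from the fact that $\SS_i$ is a closed codimension-1 face of a manifold with faces and $g$ is a diffeomorphism.

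Next I would invoke Lemma \ref{GluedAreMfdsWFaces} to conclude that $X$ is a manifold with faces. Combined with compactness, Lemma \ref{MfdsWFacesRAngleMfds} then gives $X$ the structure of a $\langle N \rangle$--manifold for some positive integer $N$. Finally, the cited result of Laures (Proposition 2.1.7 of \cite{Laures}) states exactly that every compact $\langle N \rangle$--manifold admits a neat embedding into some Euclidean space with corners $\R^M \x [0,\infty)^N$, so applying it yields the desired embedding.

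There is no real obstacle here; the potential concern is purely bookkeeping, namely checking that the $N$ supplied by Lemma \ref{MfdsWFacesRAngleMfds} is the same $N$ required by Laures' proposition. This is automatic, since both refer to the number of codimension-1 boundary strata in the chosen decomposition $\d X = \d_1 X \cup \dotsb \cup \d_N X$. Thus the proof is essentially a one-line composition: Lemma \ref{GluedAreMfdsWFaces} $\Rightarrow$ Lemma \ref{MfdsWFacesRAngleMfds} $\Rightarrow$ Laures' theorem.
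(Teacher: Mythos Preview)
Your proposal is correct and follows essentially the same approach as the paper: the paper simply notes (in the sentence immediately preceding the corollary) that Lemma~\ref{MfdsWFacesRAngleMfds} makes the glued space an $\langle N\rangle$--manifold, and that Laures' Proposition~2.1.7 then furnishes the neat embedding. Your additional remark on compactness and Hausdorffness is a nice bit of bookkeeping that the paper leaves implicit.
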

Roughly speaking, a neat embedding is one which respects the corner structure.  For a precise definition, see Laures' paper \cite[Definition 2.1.4]{Laures} or our paper \cite[Definition 3.1.2]{Rbo}.  For our construction, the important features are  (1) that neat embeddings have well defined normal bundles and (2) that the manifold being embedded meets all the boundary strata of $\R^M \x[0,\infty)^\N$ perpendicularly.

Finally note how the construction in Lemma \ref{GluedAreMfdsWFaces} can be repeated inductively.  That is, suppose $X_1,...,X_k$ are manifolds with faces, and $\SS_1, \SS'_1, ... , \SS_m, \SS'_m$ are codimension--1 faces in $X_{i_1}, X_{j_1}, ... , X_{i_m}, X_{j_m}$ such that all the $\SS_\ell$ and $\SS'_\ell$ are pairwise disjoint and such that there are diffeomorphisms $g_{\ell} : \SS_{\ell} \cong \SS'_{\ell}$ for each $\ell$.  Then we can form a glued space $X_1 \sqcup \dotsb \sqcup X_k/ \sim$, where the quotient is determined by $g_1,...,g_m$.  From the proof of Lemma \ref{GluedAreMfdsWFaces}, we see that this glued space is again a manifold with faces, hence an $\langle N \rangle$--manifold for some $N$.

\subsection{Fiberwise gluings of manifolds with faces}
\label{FibwiseGluingMfdsWFaces}
Now let $\xymatrix{F_1\ar[r] &  E_1\ar[r] & B}$ and $\xymatrix{F_2\ar[r] &  E_2\ar[r] & B}$ be fiber bundles where each $F_i$ is a compact manifold with faces.  Let $F_i(b)$ denote the fiber of $E_i$ over $b\in B$. Suppose that (for $i=1,2$)  we have sub-bundles $\SS_i \subset E_i \to B$ where the fiber $F(\SS_i)(b)$ over each $b\in B$ is a codimension--1 face of $F_i(b)$.  Suppose that for each $b\in B$, $F(\SS_1)(b), F(\SS_2)(b)$ are diffeomorphic as manifolds with corners via an orientation-reversing diffeomorphism $g_b:F(\SS_1)(b)\cong F(\SS_2)(b)$, and suppose further that these diffeomorphisms vary smoothly over $B$.  (That is, fix one copy $F(\SS)$ of $F(\SS_i)(b)$; then given a trivialization of $E_i$ over $U\subset B$ as $U\x F_i$, the induced map from $U\x F(\SS_i)$ to this fixed $F(\SS)$ is smooth.)  Then we can fiberwise glue $E_1$ to $E_2$ along $F(\SS_i)$ to form a bundle $\xymatrix{F_g\ar[r] &  E_g \ar[r] & B}$ whose fibers $F_g = F_1 \cup_g F_2$ are manifolds with faces, or $\langle N \rangle$--manifolds for some $N$.  Similarly, we can carry out a fiberwise analogue of the construction at the end of the previous paragraph with more than two bundles $E_1,...,E_k$, provided that the faces along which we glue are pairwise disjoint.

\subsubsection{Fiberwise integration on glued manifolds with corners}
Suppose that, as above, we glue bundles  $\xymatrix{F_i\ar[r] &  E_i\ar[r] & B}$ ($i=1,2$) to form a bundle  $\xymatrix{F_g\ar[r] &  E_g \ar[r] & B}$.  Suppose $\alpha_i$ for $i=1,2$ are forms on $E_i$ such that the forms $g^*(\alpha_2)$ and $\alpha_1$ agree on $\SS_1$.
The forms $\alpha_i$ can then be glued to a form $\alpha$ on $E_g$ which is at least piecewise smooth.  Without any further assumptions, we do not have fiber integration on the level of cohomology for the bundles $E_i\to B$.  But on the level of forms, this much is obvious:

\begin{lemma} 
\label{SumIntegralsIsIntegralGlued}
In the above situation,
\[
\int_{F_1} \alpha_1 + \int_{F_2} \alpha_2 = \int_{F_1 \cup_g F_2} \alpha.
\]
A similar statement holds in the case of gluing bundles $E_1,...,E_k$ with $k>2$.
\qed
\end{lemma}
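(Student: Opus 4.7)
The plan is to reduce the claim to the classical additivity of integration over a decomposition whose pieces overlap only in a measure-zero subset. By Lemma \ref{GluedAreMfdsWFaces}, the glued fiber $F_1 \cup_g F_2$ is a manifold with faces in which $F_1$ and $F_2$ sit as closed subsets meeting precisely along the codimension--$1$ submanifold $F(\SS_1)\cong F(\SS_2)$. This shared locus has measure zero with respect to any top-degree volume form on $F_1 \cup_g F_2$, so for integration purposes the glued fiber is decomposed into $F_1$ and $F_2$ up to a null set.

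First I would verify that orientations match up. Orient $F_1$ and $F_2$ with their given orientations, and glue to orient $F_1 \cup_g F_2$. For this to be consistent at an interior hypersurface of the glued manifold, the two boundary orientations induced on $\SS_1$ (from $F_1$) and $\SS_2$ (from $F_2$) must correspond under $g$ with opposite signs. This is exactly the hypothesis that each $g_b\co F(\SS_1)(b)\cong F(\SS_2)(b)$ is orientation-reversing, so the glued orientation exists and restricts to the original orientation on each piece.

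Next, by construction of $\alpha$, the pullback of $\alpha$ along the inclusion $F_i \incl F_1 \cup_g F_2$ agrees with $\alpha_i$. The gluing condition $g^*(\alpha_2)|_{\SS_1}=\alpha_1|_{\SS_1}$ guarantees that $\alpha$ is well defined and at least piecewise smooth across the seam. Applying additivity of integrals of piecewise-smooth forms with respect to a finite closed cover whose overlaps have measure zero, together with the orientation compatibility verified above, yields
\[
\int_{F_1 \cup_g F_2} \alpha \;=\; \int_{F_1} \alpha|_{F_1} + \int_{F_2} \alpha|_{F_2} \;=\; \int_{F_1} \alpha_1 + \int_{F_2} \alpha_2,
\]
which is the desired identity.

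For the generalization to $k>2$ pieces with gluings along pairwise disjoint codimension--$1$ faces, one either iterates the two-piece case, or observes directly that the full union of glued faces remains a closed subset of measure zero in $F_1 \cup \dotsb \cup F_k/\sim$, so the same additivity argument applies verbatim. There is no real obstacle here; the only point requiring attention is the orientation bookkeeping on the seam, which is handled precisely by the orientation-reversing hypothesis on each $g_b$.
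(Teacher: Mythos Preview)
Your argument is correct and matches the paper's approach: the paper does not supply a proof but simply marks the lemma with a \qed, remarking that ``integration can be done piecewise'' and that even the manifold-with-corners structure is not strictly needed. Your measure-zero overlap and orientation-compatibility justifications are exactly the content behind that remark, spelled out in detail.
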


Since integration can be done piecewise, the manifold with faces structure (or even the manifold with corners structure) on $E_g$ is not strictly necessary for this lemma.  But the particular glued space $E_g$ which we will construct will anyway be a manifold with faces.

\subsection{Gluing the bundles over the link space}
\label{GlueBundlesLinkSpace}
Now we construct our particular glued space $E_g$, which is a bundle over $\L_3$.

The spaces $E_D$ ($D=T,L,M,R$) have precisely the corner structure of their fibers and hence principal faces corresponding to those in the fibers.  Let $\SS_1,\SS_2,\SS_3$ denote the principal faces of $E_T=E[1,1,1;1]$, and let $\SS_L, \SS_M, \SS_R$ denote the principal faces of $E_L=E[2,1,1;0], \>E_M=E[1,2,1;0], \>E_R=E[1,1,2;0]$ respectively.  We denote the fibers of these faces (or faces of the fibers) $F(\SS_i)$, $i=1,2,3$ and $F(\SS_D)$, $D=L,M,R$.

We observed in Section \ref{CptfdConfigSpaces} that each of $F(\SS_1), F(\SS_2), F(\SS_3)$ is diffeomorphic to $F[1,1,1;0] \x S^2$, and that each of $F(\SS_L), F(\SS_M), F(\SS_R)$ is diffeomorphic to $F[1,1,1;0]$.  Moreover, we have an orientation-preserving diffeomorphism from $F(\SS_i)$ ($i=1,2,3$) to $F(\SS_D) \x S^2$ ($D=L,M,R$) where the orientation on $S^2$ comes from the one we fixed on $\R^3$.

We glue $F_L$, $F_M$, and $F_R$ to $F_T$ by first reversing the orientation on $F_T$, and then identifying $F(\SS_L) \x S^2$ with $F(\SS_1)$, $F(\SS_M) \x S^2$ with $F(\SS_2)$, and $F(\SS_R) \x S^2$ with $F(\SS_3)$.  
Note that we have a specific (orientation-reversing) diffeomorphism over each fiber, and these diffeomorphisms vary smoothly over $\L_3$.  Thus the gluing of the fibers specifies a gluing of the total spaces, ie, a gluing of $E_L \x S^2, E_M \x S^2, E_R \x S^2$ to $E_T$.  Denote the result $E_g$.  This space is a bundle over $\L_3$ with fiber denoted $F_g$.  

\begin{figure}[h]
\includegraphics[height=3pc]{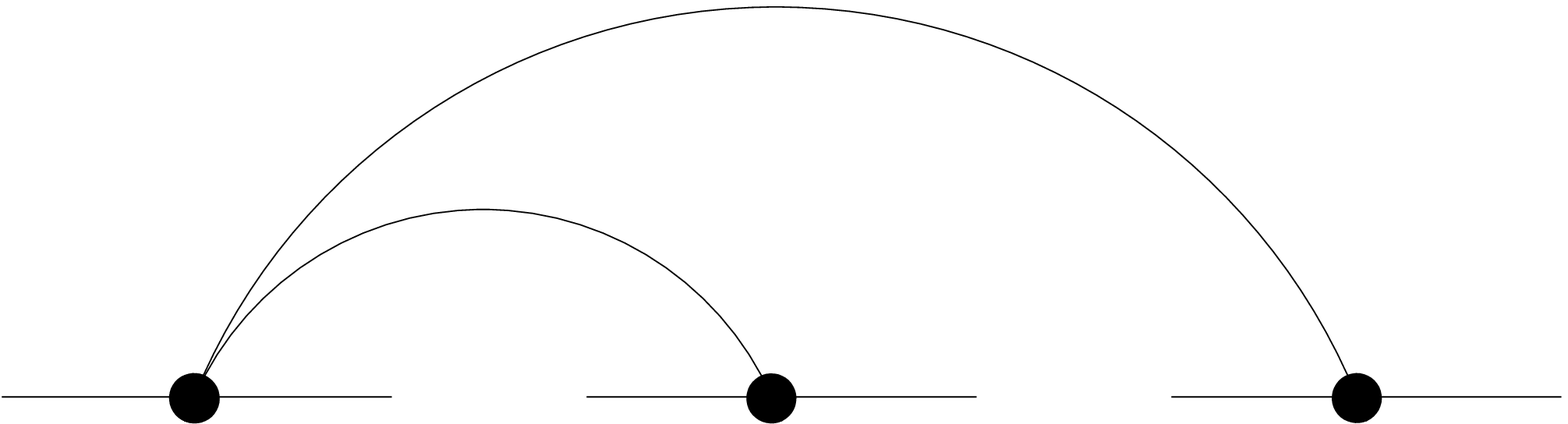},
\includegraphics[height=2pc]{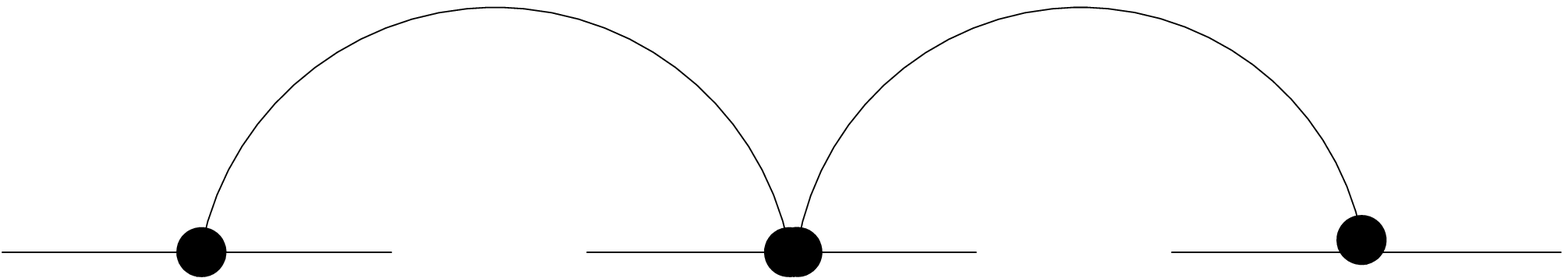},
\includegraphics[height=3pc]{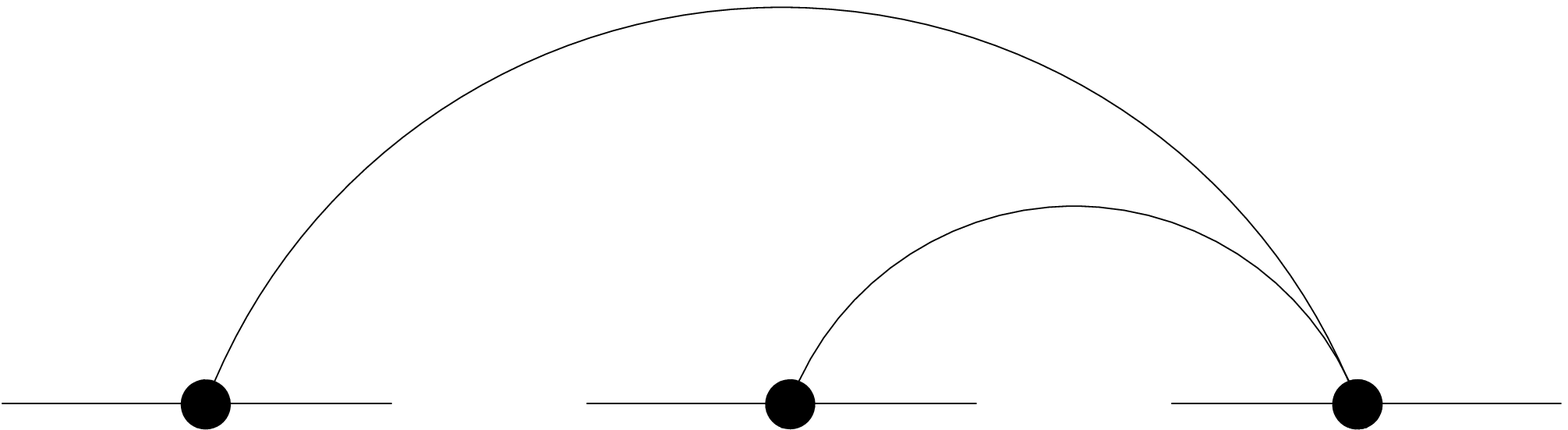}
\caption{The diagrams $L'', M'', R''$ corresponding to the three codimension--1 faces along which we glue.}
\label{CollidedDiagrams}
\end{figure}

\begin{lemma}
\label{PrincipalFacesDisjoint}
The three principal faces $\SS_1, \SS_2, \SS_3$ of $E_T$ are pairwise disjoint.  
\end{lemma}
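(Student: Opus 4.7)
The plan is to apply the combinatorial criterion for when two strata of the Axelrod--Singer compactification intersect, as recalled in Section \ref{Background}. First I would identify the three principal faces $\SS_1,\SS_2,\SS_3$ of $E_T = E[1,1,1;1]$ as those whose fibers are the faces of $F[1,1,1;1]$ indexed by the subsets $\{x_1,x_4\}, \{x_2,x_4\}, \{x_3,x_4\}$ respectively; these correspond to collisions of the free vertex $x_4$ with each of the three interval vertices. (Recall that no two interval vertices can collide with each other, since they lie on three distinct strands, so these are the only principal faces.)

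Next, for any $i \neq j$ in $\{1,2,3\}$, if the intersection $\SS_i \cap \SS_j$ were nonempty it would be the stratum of $F[1,1,1;1]$ indexed by the collection $\{\{x_i,x_4\},\{x_j,x_4\}\}$. But these two subsets satisfy $\{x_i,x_4\} \cap \{x_j,x_4\} = \{x_4\} \neq \emptyset$ while neither subset is contained in the other, so the collection violates the nesting condition required for it to index a stratum of the Axelrod--Singer compactification. Hence $\SS_i \cap \SS_j = \emptyset$, and since this is true fiberwise over each point of $\L_3$, the corresponding subbundles of $E_T$ are disjoint.

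I do not expect any substantial obstacle here: once the indexing sets of the three principal faces are identified, the combinatorial check is immediate and reduces to a single observation about two-element subsets sharing a common element. The only thing to be careful about is that we are working with the fiberwise corner structure on $E_T$ inherited from $F[1,1,1;1]$, so that disjointness of the fiber faces translates directly into disjointness of the total-space faces.
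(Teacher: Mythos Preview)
Your proposal is correct and follows essentially the same approach as the paper: identify the three principal faces by their indexing sets $\{x_1,x_4\},\{x_2,x_4\},\{x_3,x_4\}$ and then observe that any two of these sets share the element $x_4$ but are not nested, so the combinatorial criterion for strata of the Axelrod--Singer compactification to intersect fails. Your additional remark about passing from fiberwise disjointness to disjointness in the total space $E_T$ is a harmless clarification that the paper leaves implicit.
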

\begin{proof}
Recall that a face in the Axelrod--Singer/Fulton--MacPherson compactifications $C_n[M]$ is indexed by a set $\{S_1,...,S_k\}$ of subsets of $\{1,...,n\}$ which are nested or disjoint.  The intersection of two faces indexed by $\{S_1,...,S_k\}$ and $\{S'_1,...,S'_\ell\}$ is the face indexed by $\{S_1,...,S_k, S_1',...,S'_\ell\}$, if these sets are nested or disjoint.  But in our case, the three faces are indexed by $\{\{1,4\}\}, \{\{2,4\}\}, \{\{3,4\}\}$, and each pair of sets is neither nested nor disjoint.  Hence no pair of $\SS_1, \SS_2, \SS_3$ intersect.  
\end{proof}

Now as indicated at the end of Sections \ref{GluingMfdsWFaces} and \ref{FibwiseGluingMfdsWFaces}, the proof of Lemma \ref{GluedAreMfdsWFaces} makes the main result of this Section clear:

\begin{proposition}
The glued total space $E_g$ (or its fiber $F_g$) is a manifold with faces, hence a $\langle N \rangle$--manifold for some $N$.
\qed
\end{proposition}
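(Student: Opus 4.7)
The plan is to reduce the statement to a fiberwise assertion and then apply the inductive version of Lemma \ref{GluedAreMfdsWFaces} discussed at the end of Section \ref{GluingMfdsWFaces}. Because each bundle $E_D \to \L_3$ is a fiber bundle with a manifold-with-faces fiber, and because the gluing diffeomorphisms between boundary faces vary smoothly over $\L_3$ (as set up in Section \ref{GlueBundlesLinkSpace} and discussed abstractly in Section \ref{FibwiseGluingMfdsWFaces}), it suffices to prove that the fiber $F_g$ itself is a manifold with faces. The manifold-with-faces structure on the total space $E_g$ will then follow by combining the local trivializations of the bundles $E_D$ with the local manifold-with-faces charts on $F_g$ constructed via the gluing.

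To verify that $F_g$ is a manifold with faces, I would first collect the ingredients. The fibers $F_T$ and $F_L, F_M, F_R$ are manifolds with faces by Section \ref{CptfdConfigSpaces} (indeed they inherit the structure from the Axelrod--Singer compactification). Since $S^2$ is a closed manifold (no corners), the products $F_D \times S^2$ for $D = L, M, R$ are again manifolds with faces, and $\SS_D \times S^2$ is a codimension--1 face of $F_D \times S^2$. The gluing identifications $F(\SS_D) \times S^2 \cong F(\SS_i)$ constructed in Section \ref{GlueBundlesLinkSpace} are orientation-reversing diffeomorphisms of manifolds with corners, which is exactly the hypothesis needed in Lemma \ref{GluedAreMfdsWFaces}.

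The one condition that needs explicit checking in the inductive gluing is the pairwise disjointness of the three faces $\SS_1, \SS_2, \SS_3 \subset F_T$ along which we glue. But this is precisely the content of Lemma \ref{PrincipalFacesDisjoint}. Pairwise disjointness of the corresponding faces $\SS_L \times S^2, \SS_M \times S^2, \SS_R \times S^2$ is automatic, since these three faces live in the three disjoint pieces $F_L \times S^2, F_M \times S^2, F_R \times S^2$. Consequently, the inductive gluing procedure described at the end of Section \ref{GluingMfdsWFaces} applies with the four pieces $F_T, F_L \times S^2, F_M \times S^2, F_R \times S^2$ and the three pairs of identified faces, and yields a manifold-with-faces structure on $F_g$.

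Finally, to upgrade this to an $\langle N \rangle$--manifold structure, I would invoke Lemma \ref{MfdsWFacesRAngleMfds}, which asserts that every compact manifold with faces admits some $\langle N \rangle$--structure; compactness of $F_g$ follows from compactness of the four pieces being glued. I expect no serious obstacle: the hypotheses of the gluing lemma are already almost completely verified in Section \ref{GlueBundlesLinkSpace}, so the argument is essentially a matter of citing Lemmas \ref{GluedAreMfdsWFaces}, \ref{PrincipalFacesDisjoint}, and \ref{MfdsWFacesRAngleMfds} in order. The only mildly subtle point is keeping track of how the fiberwise gluing of Section \ref{FibwiseGluingMfdsWFaces} promotes the fiber-level manifold-with-faces structure on $F_g$ to one on the total space $E_g$, which requires the smoothness of the family of gluing diffeomorphisms $g_b$ over $\L_3$ --- but this smoothness was built into the construction.
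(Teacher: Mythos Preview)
Your proposal is correct and follows exactly the route the paper takes: the paper simply notes that the proposition is clear from the proof of Lemma~\ref{GluedAreMfdsWFaces} together with the inductive/fiberwise extensions in Sections~\ref{GluingMfdsWFaces}--\ref{FibwiseGluingMfdsWFaces} and the disjointness of the principal faces (Lemma~\ref{PrincipalFacesDisjoint}), and then invokes Lemma~\ref{MfdsWFacesRAngleMfds} for the $\langle N\rangle$--structure. Your write-up spells out these same citations in slightly more detail than the paper's one-line remark, but there is no substantive difference.
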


Note that this implies of course that $E_g$ is a manifold with corners.

\section{Finishing the proof of Theorem \ref{DegreeThm}}
\label{ProofThm1}
To recap how far along we are in proving Theorem \ref{DegreeThm}, we first showed in Section \ref{MilnorIntegrals} (Corollary \ref{MellorCorBetter}) how a certain sum of configuration space integrals combine to yield the triple linking number. 
We then showed in Section \ref{GluingMfds} that there is a bundle $E_g$ over the space of links whose fibers are manifolds with faces.  
In this Section, we prove Theorem \ref{DegreeThm} by topologically reinterpreting the configuration space integral expression from Section \ref{MilnorIntegrals} in terms of the glued bundle $E_g$ (and its fiber $F_g$) from Section \ref{GluingMfds}.

Recall from Lemma \ref{MapsPhi} the maps 
\begin{align}
\label{MapsToSpheres}
\begin{array}{c}
\Phi_L = (\mathrm{proj}_{S^2}, \phi_{31}, \phi_{42}) \co E_L \x S^2 \to S^2 \x S^2 \x S^2 \\
\Phi_M = (\phi_{12}, \mathrm{proj}_{S^2}, \phi_{43}) \co E_M \x S^2 \to S^2 \x S^2 \x S^2 \\
\Phi_R = (\phi_{13}, \phi_{24}, \mathrm{proj}_{S^2}) \co E_R \x S^2 \to S^2 \x S^2 \x S^2 \\
\Phi_T = (\phi_{14}, \phi_{24}, \phi_{34}) \co E_T \to S^2 \x S^2 \x S^2.
\end{array}
\end{align}
whose domains are the four spaces which we glued together.
We fixed 2--forms $\omega_1, \omega_2, \omega_3 \in \Omega^2 (S^2)$ in Section \ref{ChoiceOfForms}, and we take each $\theta_{ij}$ to be the pullback via $\phi_{ij}$ (as above) of the appropriate $\omega_i$.  (Luckily the only $\phi_{ij}$ with the same indices map to the same $S^2$ factor, so there is no abuse of notation.)

Note that the forms 
\begin{align*}
\theta_{14}\theta_{24}\theta_{34}\in \Omega^6(E_T)
 & & \mbox{ and } & & 
\omega_1 \theta_{31}\theta_{42}
\in \Omega^6(E_L \x S^2)  
\end{align*}
agree on the common face
\begin{align*}
E_T \supset \SS_L \x S^2 \>\>\>\>\> \cong \>\>\>\>\>  \SS_1 \subset  E_L \x S^2.\>\>\>\>\>\>\>\>
\end{align*}

Similarly, the forms  
\begin{align*}
\theta_{14}\theta_{24}\theta_{34} \in \Omega^6(E_T) 
& & \mbox{ and } & & 
\theta_{12} \omega_2 \theta_{43}
\in\Omega^6(E_M \x S^2) 
\end{align*}
agree on the common face
\[
E_T \supset \SS_M \x S^2 \>\>\>\>\> \cong \>\>\>\>\> \SS_2 \subset  E_M \x S^2,\>\>\>\>\>\>\>\>
\]
and 
\begin{align*}
\theta_{14}\theta_{24}\theta_{34} \in \Omega^6 E_T
& & \mbox{ and } & & 
\theta_{13}\theta_{24}\omega_3 \in\Omega^6(E_R \x S^2) 
\end{align*}
agree on the common face
\[
E_T  \supset \SS_R \x S^2 \>\>\>\>\> \cong \>\>\>\>\>  \SS_3 \subset E_R \x S^2.\>\>\>\>\>\>\>\>
\]
Let $\beta$ be the 6--form on $E_g$ obtained by gluing together of these four 6--forms.

\begin{remark}
\label{OrderOfIndices}
In choosing the order of indices in the $\theta_{ij}$ above, we were guided by the diagrams: we fix the form in $\Omega^6(E_T)$, and we orient the edge in the diagram $T$ corresponding to $\theta_{ij}$ from $i$ to $j$; then we imagine, for example, points 1 and 4 colliding to yield the diagram $L''$ in Figure \ref{CollidedDiagrams}, and then moving apart along the link to yield the diagram $L$.  The choice of form on $E_L \x S^2$ came from the fact that the edges in $L$ are now oriented from point 3 to point 1 and from point 4 to point 2.  
\end{remark}

\begin{proposition}
\label{Mu123IsIntegralOnGluedSpace}
$\mu_{123}=\int_{F_g} \beta$.
\end{proposition}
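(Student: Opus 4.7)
The plan is to decompose $\int_{F_g} \beta$ as a sum of integrals over the four pieces $F_L \times S^2$, $F_M \times S^2$, $F_R \times S^2$, and $F_T$ via Lemma \ref{SumIntegralsIsIntegralGlued}, and then match the result term-by-term against $\mu_{123} = I_L - I_M + I_R - I_T$ from Corollary \ref{MellorCorBetter}. The preceding discussion already verifies that the four constituent 6-forms agree on the three codimension--$1$ gluing faces, so $\beta$ is well defined (and in particular integrable) on $F_g$; the fact that $\beta$ need only be piecewise smooth across the gluing seams is harmless for the fiberwise integral by Lemma \ref{SumIntegralsIsIntegralGlued}.

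The crucial step is the sign count. By the construction of $E_g$ in Section \ref{GlueBundlesLinkSpace}, the pieces $F_L \times S^2$, $F_M \times S^2$, and $F_R \times S^2$ sit inside $F_g$ with their intrinsic orientations, whereas $F_T$ is glued in with its orientation \emph{reversed}. Applying the gluing lemma therefore gives
\[
\int_{F_g} \beta = \int_{F_L \times S^2} \omega_1 \theta_{31}\theta_{42} + \int_{F_M \times S^2} \theta_{12}\omega_2\theta_{43} + \int_{F_R \times S^2} \theta_{13}\theta_{24}\omega_3 - \int_{F_T} \theta_{14}\theta_{24}\theta_{34}.
\]
The first and third summands are precisely $I_L$ and $I_R$; the last is $-I_T$. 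The middle summand equals $-I_M$, because the integrand $\theta_{12}\omega_2\theta_{43}$ appearing in $\beta$ differs from the $I_M$-integrand $\theta_{12}\omega_2(-\theta_{43})$ of Corollary \ref{MellorCorBetter} by a global sign (equivalently, by the antisymmetry $\theta_{43}=-\theta_{34}$).

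Adding the four contributions yields $I_L - I_M + I_R - I_T$, which equals $\mu_{123}$ by Corollary \ref{MellorCorBetter}, proving the proposition. The only real obstacle is bookkeeping: one must track both the orientation reversal on $F_T$ imposed by the gluing of Section \ref{GlueBundlesLinkSpace} and the index transpositions in the $\theta_{ij}$ introduced just before Lemma \ref{MapsPhi}. But those transpositions and the orientation conventions were chosen precisely so that the four 6-forms match up on the gluing faces (making $\beta$ well defined) and so that the signed sum of the four pieces of $\int_{F_g}\beta$ reproduces the alternating sum $I_L - I_M + I_R - I_T$.
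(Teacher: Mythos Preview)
Your proof is correct and follows essentially the same approach as the paper: both invoke Corollary \ref{MellorCorBetter}, track the orientation reversal on $F_T$ and the sign flip in the $M$-integrand, and then apply Lemma \ref{SumIntegralsIsIntegralGlued}. The only cosmetic difference is that the paper starts from $\mu_{123}$ and rewrites it as $\int_{F_g}\beta$, whereas you start from $\int_{F_g}\beta$ and decompose it back into $I_L - I_M + I_R - I_T$.
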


\begin{proof}
From Corollary \ref{MellorCorBetter}, 
\begin{align*}
\mu_{123}
&= \int_{F[2,1,1;0]\x S^2} \omega_1 \theta_{31}\theta_{42} - \int_{F[1,2,1;0] \x S^2} \theta_{12} \omega_2 (-\theta_{43}) +
\int_{F[1,1,2;0] \x S^2} \theta_{13}\theta_{24} \omega_3  
- \int_{F[1,1,1;1]} \theta_{14}\theta_{24}\theta_{34} \\
&= \int_{F[2,1,1;0]\x S^2} \omega_1 \theta_{31}\theta_{42} + \int_{F[1,2,1;0] \x S^2} \theta_{12} \omega_2 \theta_{43} +
\int_{F[1,1,2;0] \x S^2} \theta_{13}\theta_{24} \omega_3 + 
\int_{\overline{F[1,1,1;1]}} \theta_{14}\theta_{24}\theta_{34} 
\end{align*}
where $\overline{F[1,1,1;1]}$ is $F[1,1,1;1]=F_T$ with its orientation reversed.
By Lemma \ref{SumIntegralsIsIntegralGlued} and the fact that the forms agree on the appropriate pairs of faces, this last expression is  $\int_{F_g} \beta$.
\end{proof}

Recall that we chose $\omega_i$ 
so that the restriction of $\Phi_D^*\alpha:=\Phi_D^* (\omega_1 \x \omega_2 \x \omega_3)$ to any infinite face (for any $D$) vanishes.  
This vanishing on infinite faces means that $\beta$ represents a class in $H^6(E_g, \d E_g)$.  If we consider the integration over just one fixed link in the base $\L_3$, then we can think of $\beta$ as a top-dimensional class in $H^6(F_g, \d F_g)$.  We have seen that $F_g $ is a manifold with corners, and $\d F_g$ is precisely its boundary, so there exists a fundamental class $[F_g, \d F_g]$.  In such a situation, integration is the same as pairing the cohomology class $[\beta]$ with the fundamental class $[F_g, \d F_g]$:
\[
\mu_{123} = \int_{F_g} \beta = \langle [\beta], [F_g, \d F_g] \rangle
\]
Since the $\omega_i$ represent integral cohomology classes, so does $\beta$, so the right-hand side can be viewed in integral (co)homology.

The maps (\ref{MapsToSpheres}) descend to a map $E_g \to S^2 \x S^2 \x S^2$ and, looking over just one link, to a map  
\begin{align}
\label{FiberMapToSpheres}
\xymatrix{ F_g \ar[r] & S^2 \x S^2 \x S^2}
\end{align}
with 
\[
\xymatrix{& \>\>\>\>\>\>\>\> H^6(F_g; \Z) \ni [\beta] & [\omega_1 \x \omega_2 \x \omega_3] \in H^6(S^2\x S^2 \x S^2; \Z) \ar@{|->}[l]}
\]
where $[\beta]$ is really the image of $[\beta]$ under $H^6(F_g, \d F_g) \to H^6(F_g)$, and where $[\omega_1 \x \omega_2 \x \omega_3]$ is a generator, since the $\omega_i$ represent generators of $H^2(S^2; \Z)$.  

Recall the notation $\DD\subset S^2 \x S^2 \x S^2$ for the image of $\d F_g$, which we described in Proposition \ref{DegenerateLocus} and Remark \ref{DegenerateLocusBetter}.
We can rewrite the map (\ref{FiberMapToSpheres}) above as
\begin{align*}
\xymatrix{ 
F_g / \d F_g \ar[r] & S^2 \x S^2 \x S^2 / \DD.
}
\end{align*}

Recall that $F_g = F_T \, \cup \, F_L \x S^2 \, \cup \, F_M\x S^2 \, \cup \, F_R \x S^2$.  Recall also that the compactifications are homotopy-equivalent to their interiors and have collar neighborhoods of their boundaries.  We now ignore the corner structure since we are purely in the realm of topology.  Up to homeomorphism $F_T \cong I^3 \x (D^3 \setminus \{\mbox{3 $D^3$'s}\})$; the three missing $D^3$'s correspond to the principal faces $F(\SS_1),F(\SS_2), F(\SS_3)$, which are homeomorphic to $I^3 \x S^2$.  The other three $F_D\x S^2$ are homeomorphic to $I^4 \x S^2 = I^3 \x (I \x S^2)$, and the principal faces $F(\SS_D)$ correspond to the $I^3 \x S^2$ pieces of the boundary obtained by choosing one endpoint in the $I$ factor.  Thus $F_g \cong I^3 \x (D^3 \setminus \{\mbox{3 $D^3$'s}\})$, since gluing $F_L,F_M,F_R$ to $F_T$ corresponds to filling in a neighborhood of three boundary components in the $(D^3 \setminus \{\mbox{3 $D^3$'s}\})$ factor.

We extend the above map to the left, where $CX$ denotes the cone on $X$:
\[
 \xymatrix{ D^6 \cong I^3 \x \left((D^3 \setminus \{\mbox{3 $D^3$'s}\}) \cup \bigcup_1^3 CS^2 \right)  \ar[r] & D^6/ \d D^6 
  \ar[r] & F_g / \d F_g  \ar[r] &  S^2 \x S^2 \x S^2 / \DD}
\]
The first map is just the quotient by the boundary.  The middle map is the quotient by the image of $I^3 \x \{3 \mbox{ cone points}\}$ under the first map.  This image in $D^6/ \d D^6$ is $\bigvee_1^3 S^3$, so this middle map is the quotient 
\[
\xymatrix{\bigvee_1^3 S^3 \ar@{^(->}[r] & S^6 \ar@{->>}[r] & F_g/\d F_g}
\]
which induces  isomorphisms in $H^6$ and $H_6$.  Under the isomorphism in $H^6$, $[F_g, \d F_g]$ corresponds to a fundamental class $[S^6]$.  Thus the triple linking number $\mu_{123}= \langle [\beta], [F_g, \d F_g] \rangle$ is given by the pairing of the fundamental class $[S^6]$ with the pullback of the cohomology class $[\alpha]=[\omega_1 \x \omega_2 \x \omega_3]$ via the left-hand map below; the class $[\alpha]$ furthermore maps to a generator of $H^6(S^2 \x S^2 \x S^2; \Z)$ via the right-hand map:
\[
\xymatrix{ S^6 \ar[r] & S^2 \x S^2 \x S^2 / \DD & S^2 \x S^2 \x S^2 \ar@{->>}[l]}
\]
This proves Theorem 1. 
\qed

The constraints we have imposed on the $\omega_i$ imply that $\alpha = \omega_1 \x \omega_2 \x \omega_3$ has support in a 
6-ball contained in 
$S^2 \x S^2 \x S^2 \setminus \DD$ 
Quotienting by the complement of the support of $\alpha$ thus gives a space $S(\alpha)$ which has a fundamental class.  It is not hard to see that the degree of the composition 
\[
\xymatrix{ S^6 \ar[r] & S^2 \x S^2 \x S^2 / \DD  \ar@{->>}[r] & S(\alpha)}
\]
is the triple linking number $\mu_{123}$.  So we have exhibited it as a degree, as promised in the Introduction.

\section{Integration along the fiber via Pontrjagin--Thom}
\label{IntegrationViaPT}

This Section reviews a general, classical result relating integration along the fiber to a Pontrjagin--Thom construction.  
We begin with the case where the fiber has no boundary, and then treat the case of a fiber with boundary.  The latter case proceeds very similarly to the former, but we start with the boundaryless case for ease of readability.

\subsection{Warmup: the case of boundaryless fibers}

Suppose $F\to E \to B$ is a fiber bundle of compact manifolds (without boundary).  Then integration along the fiber is a chain map (see equation (\ref{Stokes}) as well as Bott and Tu's book \cite[page 62]{BottTu}) and hence induces a map in cohomology 
\[
\mbox{$\int_F$}\co \xymatrix{ H^p(E) \ar[r] &  H^{p-k}(B).}
\]
Since we will use de Rham cohomology (or compare other constructions to those in de Rham cohomology) all (co)homology will be taken with real coefficients for the rest of this Section.  

The above map can also be produced as follows.  Since $E$ is a compact manifold, we can embed it in some $\R^N$ and use this to embed the bundle into a trivial bundle:
\[
\xymatrix{
E \ar@{^(->}[rr]^e \ar[dr]& & \R^N \x B \ar[dl] \\
 & B & 
}
\]
Then $E$ has a tubular neighborhood $\eta(E)$ in $\R^N \x B$ which is diffeomorphic to the normal bundle $\nu = \nu(e)$ of the embedding $e$.  
Taking the quotient by the complement gives a Pontrjagin--Thom collapse map $\tau$:
\[
\xymatrix{
\R^N \x B \ar[rr] \ar[dr] &  & \R^N \x B / (\R^N \x B - \eta(E)) \cong E^\nu \\
 & \Sigma^N B_+ \ar[ur]_\tau &
}
\]
Here $E^\nu$ denotes the Thom space $D(\nu)/S(\nu)$ of the normal bundle $\nu \to E$ and $\Sigma^N B_+$ is the $N$--fold suspension of $B_+ = B \sqcup \{*\}$.  The map above factors through $\Sigma^N B_+$ because that space is just the one-point compactification of $B\x \R^N$, and because $E$ is compact.  We consider the map in cohomology induced by  $\tau\co \Sigma^N B_+ \to E^\nu$. \\

Lemma \ref{IntegrationIsPTEasy} below is the restatement of Lemma \ref{IntegrationIsPTBoundary} (whose statement and proof appears in the next Subsection) in the special case where the boundary is empty.  The reader may prove it as an exercise, or see how the proof of Lemma \ref{IntegrationIsPTBoundary} simplifies when the boundary is empty, or just deduce Lemma \ref{IntegrationIsPTEasy} from the statement of Lemma \ref{IntegrationIsPTBoundary}.

\begin{lemma}  
\label{IntegrationIsPTEasy}
Given a bundle $\xymatrix{F \ar[r] &  E \ar[r]^\pi &  B}$ with all three spaces compact manifolds without boundary and $k=\dim F$, then $\int_F\co H^p(E) \to H^{p-k}(B)$  agrees with the composition 
\[
\xymatrix{ H^{p+k}(E) \ar[rr]^-{\mbox{Thom }\cong}&  & \tH^{p+N} (E^{\nu}) \ar[r]^-{\tau^*} & \tH^{p+N} (\Sigma^N B_+) \ar[r]^-{\Sigma \> \cong} & H^{p}(B) }
\]
induced by the Pontrjagin--Thom collapse map, the Thom isomorphism, and the suspension isomorphism.
\end{lemma}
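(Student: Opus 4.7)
The plan is to verify the equality at the level of de Rham forms, using explicit chain-level representatives of the three maps in the composition and reducing to a Fubini computation. First, choose a Thom form $U \in \Omega^{N-k}(\nu)$ for the normal bundle $\nu \to E$, i.e., a closed form with compact vertical support whose integral along each normal fiber is $1$. Via the tubular neighborhood diffeomorphism $\nu \cong \eta(E) \subset \R^N \x B$, transport $U$ to a form (still called $U$) supported in $\eta(E)$ and extended by zero outside. For $\omega \in \Omega^{p+k}(E)$ representing a class in $H^{p+k}(E)$, the image under the Thom isomorphism is represented by $p^*\omega \wedge U$, where $p\co \nu \to E$ is the projection; and the Pontrjagin--Thom pullback $\tau^*$ amounts to viewing this as a form on $\R^N \x B$ supported in $\eta(E)$.

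Next, I would realize the inverse suspension isomorphism $\tH^{p+N}(\Sigma^N B_+) \cong H^p(B)$ via integration along the $\R^N$ fiber of the trivial bundle $\R^N \x B \to B$, using that $\Sigma^N B_+$ is the fiberwise one-point compactification of this bundle. The full composite then sends $[\omega]$ to the class of $\int_{\R^N} p^*\omega \wedge U \in \Omega^p(B)$.

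The main content is identifying this with $\int_F \omega$. Since the integrand is supported in $\eta(E)$, and $\eta(E) \to E \overset{\pi}{\to} B$ factors as two fiber bundle projections with fibers $\R^{N-k}$ and $F$ respectively, I would apply the functoriality of fiber integration for composed bundles: integrating fiberwise over $\R^N$ equals integrating first over the $\R^{N-k}$ normal fibers and then over the $F$ fibers. The inner integral yields $\omega$ because $p^*\omega$ is constant along the normal directions and $\int_{\mathrm{fiber}} U = 1$. What remains is precisely $\int_F \omega$.

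The main obstacle is orientation bookkeeping: one must check that the decomposition of the orientation of $\R^N$ as (normal-to-$E$) $\oplus$ (tangent-to-$F$) is consistent with the orientation conventions defining the Thom class, the suspension isomorphism, and fiber integration, so that no stray signs appear. This computation is classical (compare Bott and Tu's treatment of the case $B=\mathrm{pt}$), and extends to the parametrized setting by carrying it out fiberwise over $B$.
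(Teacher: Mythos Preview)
Your argument is correct, but it takes a genuinely different route from the paper's.  The paper does not prove this lemma directly; it defers to the relative version (Lemma~\ref{IntegrationIsPTBoundary}) and specializes.  That proof proceeds via Poincar\'{e} duality: one first shows $\int_F \beta \cap [B] = \pi_*(\beta \cap [E,\d E])$ by a Fubini identity for the pairings $\int_B$ and $\int_E$, and then chases a commutative diagram of cap products with fundamental classes, in which the Thom isomorphism appears as capping with the Thom class viewed as Poincar\'{e} dual to the zero section, and the collapse map and suspension enter through naturality of cap products.

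Your approach is instead purely de Rham: you represent the Thom isomorphism by wedging with an explicit Thom form, the collapse map by extension by zero from the tubular neighborhood, and the suspension isomorphism by integration over $\R^N$, then reduce everything to iterated fiber integration $\eta(E) \to E \to B$.  This is closer in spirit to Bott--Tu and is arguably more transparent in the boundaryless case.  The paper's duality-based argument, on the other hand, is better suited to the relative case it actually needs (fibers with corners), where one wants to work with pairs $(E,\d E)$ and relative Thom classes rather than compactly supported forms; your extension-by-zero and Fubini steps would require more care there to keep track of which boundary strata are being collapsed.
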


\subsection{Fibers with boundary}

Now let $F \to E \to B$ be a fiber bundle with $F$ a $k$--dimensional compact manifold with boundary, or even a manifold with faces.
Continue to assume that $B$ is a compact manifold (without boundary).
Considering the boundary of the fiber $\d F \subset F$ gives us a bundle $\d E \to B$ which is a sub-bundle of $E\to B$.   Let $\beta$ be a $(p+k)$--form on $E$.  Suppose the restriction of $\beta$ to $\d E$ is zero.  Then the corresponding cohomology class $\beta \in H^{p+k}(E)$ comes from a class in $H^{p+k}(E, \d E)$.  

Alternatively, $E$ has a collar neighborhood $[0,1) \x \d E$ of its boundary, so $E$ is homotopy-equivalent to the complement of a collar, say $[0,\frac{1}{2})\x \d E$.  Under the homotopy, the form $\beta$ becomes a form on the interior of $E$ with compact support.
Thus we may view $\beta$ as an element in cohomology with compact support, $\beta \in H^{p+k}_c E$.

The vanishing on  $\d E$ of $\beta$ implies by Stokes' Theorem that integration along the fiber produces a well defined cohomology class $\int_F \beta \in H^{p-k}(B)$ (again cf.~equation (\ref{Stokes}) or \cite[page 62]{BottTu}).

Since $F$ is a manifold with faces it can be embedded neatly in some Euclidean space with corners $\R^{\NN, M}:= [0,\infty)^N \x \R^M$ (ie, in a way that respects the corner structure).  Recall that a neat embedding has a well defined normal bundle.  

Since all the corner structure in $E$ comes from that in $F$, we can neatly embed the bundle $E\to B$ into a trivial bundle:
\[
\xymatrix{
E \ar@{^(->}[rr]^e 
\ar[dr]
& & \R^{\NN, M} \x B 
\ar[dl] \\
 & B & 
}
\]
Once again a tubular neighborhood $\eta(E)\subset \R^{\NN, M} \x B$ is diffeomorphic to the normal bundle $\nu = \nu(e)$ of the embedding $e$.  
Taking the quotient by the complement gives a Pontrjagin--Thom collapse map $\tau$:
\[
\xymatrix{
\R^{\NN, M} \x B \ar[rr] \ar[dr] &  & \R^{\NN, M} \x B / (\R^{\NN, M} \x B - \eta(E)) \cong E^\nu \\
 & C^N\Sigma^M B_+ \ar[ur]_\tau &
}
\]
The map above factors through $C^N \Sigma^M B_+$ (where $C^N$ denotes $N$--fold cone) because that is the one-point compactification of $\R^{\NN, M} \x B$.  Restrict the embedding $e$ to $\d E\subset E$, and let $\d E^\nu \subset E^\nu$ be the result of the corresponding restriction of the collapse map.  If we let 
\[
\d(C^N\Sigma^M B_+) = \d ([0, \infty)^N) \x \R^M \x B \subset [0, \infty)^N \x \R^M \x B \subset C^N\Sigma^M B_+
\]
then clearly
\[
e\co \d E \incl \d(C^N\Sigma^M B_+)
\]
so restricting the collapse map above gives a map of pairs
\[
(C^N\Sigma^M B_+, \> \d(C^N\Sigma^M B_+)) \to (E^\nu, \> \d E^\nu).
\]
The resulting map on quotients is  $\tau\co \Sigma^{N+M} B_+ \to E^\nu / \d E^\nu$. \\

\begin{lemma}
\label{IntegrationIsPTBoundary}
Suppose that $F\to E\to B$ is a fiber bundle with $B$ a compact manifold, $F$ a $k$--dimensional compact manifold with boundary.  Let $\beta \in \Omega^{p+k}(E)$ be a form whose restriction to $\d E$ is zero.  Then the class $\int_F \beta$ is precisely the image of $\beta$ under the composition 
\[
\xymatrix{ H^{p+k}(E, \d E) \ar[rr]^-{\mbox{Thom }\cong} & & \tH^{p+N+M} (E^\nu / \d E^\nu) \ar[r]^{\tau^*} & \tH^{p+N+M} (\Sigma^{N+M} B_+) \ar[r]^-{\Sigma \> \cong} & H^{p}(B) }
\]
induced by the Pontrjagin--Thom collapse map together with the Thom isomorphism and suspension isomorphism.
\end{lemma}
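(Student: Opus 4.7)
The plan is to prove Lemma \ref{IntegrationIsPTBoundary} by reducing to the closed-fiber case of Lemma \ref{IntegrationIsPTEasy} via naturality in the base, and then handling the relative/neat-embedding subtlety separately over a point. All four maps in the composition --- the relative Thom isomorphism, the pullback $\tau^*$, the suspension isomorphism, and integration along the fiber $\int_F$ --- are natural with respect to restriction to open subsets $U \subset B$, and cohomology on $B$ is determined by its values on any open cover (via a Mayer--Vietoris argument). So the first step is to choose a cover of $B$ by opens $U$ over which the bundle trivializes as $U \x F \to U$ and over which we can choose a product neat embedding $U \x F \incl U \x \R^{\NN, M}$. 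Then $\tau$ factors as $\id_{U_+} \wedge \tau_F$, the Thom space becomes $U_+ \wedge (F^\nu/\d F^\nu)$ via Künneth, and both the Thom and suspension isomorphisms respect this splitting. By Künneth applied to the forms as well, the claim reduces to the single-point case $B = \mathrm{pt}$, $E = F$.

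Second, in the point case the composition becomes
\[
\xymatrix{ H^{k}(F, \d F) \ar[r]^-{\mathrm{Thom}} & \tH^{N+M}(F^\nu/\d F^\nu) \ar[r]^{\tau^*} & \tH^{N+M}(S^{N+M}) \cong \R }
\]
and the assertion is that it sends $[\beta]$ to $\int_F \beta$. I would prove this by computing $\tau^*$ of a relative Thom class $U_\nu$ explicitly. Represent $U_\nu$ by a closed form $\Phi$ supported in a tubular neighborhood of the zero section of $\nu$, normalized so that its integral over each normal fiber is $+1$, and vanishing on the part of $\d F^\nu$ which lies outside the tubular neighborhood of $\d F$. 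The image of $[\beta] \cup U_\nu$ under $\tau^*$ is then the form $\pi^*\beta \wedge \Phi$ pushed forward onto $\R^{\NN, M}$ via the tubular-neighborhood diffeomorphism and extended by zero. Pairing this with the fundamental class of $S^{N+M} = \R^{\NN, M}/\d \R^{\NN, M}$ produces $\int_{\R^{\NN,M}} (\pi^*\beta)\wedge \Phi$, which by Fubini applied to the fibration $\nu \to F$ is $\int_F \beta \cdot \int_{\mathrm{fiber}} \Phi = \int_F \beta$, as desired.

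The main obstacle is verifying that the classical Pontrjagin--Thom theorem really does pass to neat embeddings of manifolds with faces in its relative form --- in particular, that the Thom class $U_\nu$ of the normal bundle of a neat embedding can be chosen so that $\tau$ descends to a well-defined map of pairs $(\R^{\NN,M}, \d \R^{\NN,M}) \to (F^\nu, \d F^\nu)$ whose pullback $\tau^* U_\nu$ vanishes on $\d \R^{\NN,M}$. This compatibility is guaranteed by the defining property of a neat embedding: the tubular neighborhood of $F$ intersects $\d \R^{\NN, M}$ exactly in a tubular neighborhood of $\d F$, meeting the boundary strata perpendicularly. Higher-codimension corners of $F$ map to higher-codimension strata of $\R^{\NN,M}$, and so do not contribute to the top-dimensional integration. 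Given this compatibility, the de Rham computation in step two is a routine adaptation of Bott--Tu's proof \cite{BottTu} to the compactly-supported/relative setting using a collar of $\d E$.
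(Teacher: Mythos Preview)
Your first step has a real gap: cohomology classes are not determined by their restrictions to opens in a cover (the generator of $H^1(S^1)$ restricts to zero on any contractible cover), so verifying $\int_F \beta = \mathrm{PT}(\beta)$ over every $U$ in a trivializing cover does not give equality on $B$. A Mayer--Vietoris/five-lemma induction proves that a natural transformation is an \emph{isomorphism}, not that two natural transformations are \emph{equal}; if you set $\delta = \int_F - \mathrm{PT}$ and chase the long exact sequence, you only learn that $\delta(\alpha)$ lies in the image of the connecting map, not that it vanishes. You also quietly replace the restricted embedding by a product embedding over each $U$; that requires knowing the Pontrjagin--Thom composition is independent of the choice of neat embedding, which is true stably but is a separate fact you have not supplied.

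The good news is that your step two needs no reduction --- it runs globally. Choose the tubular neighborhood fiberwise over $B$ (possible since the embedding $E \incl \R^{\NN,M} \x B$ is a map over $B$). Then at the form level the composite $\Sigma \circ \tau^* \circ \mathrm{Thom}$ sends $\beta$ to the integral over $\R^{\NN,M}$ of the transplant of $\pi_\nu^*\beta \wedge \Phi$, and by Fubini for the composite fibration $\nu \to E \to B$ this is exactly $\int_F \beta$. That already completes an alternative proof. For comparison, the paper argues via Poincar\'{e} duality: it first shows $\int_F \beta \cap [B] = \pi_*(\beta \cap [E,\d E])$ by a Fubini-type pairing identity, and then verifies through a commutative ladder of cap-product squares (relative Thom class as Poincar\'{e} dual of the zero section, naturality of cap products under $i$ and $\tau$, compatibility of cap and cross products) that the Pontrjagin--Thom composition is likewise Poincar\'{e}-dual to $\pi_*$.
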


\begin{proof}
We have $\beta \in H^{p+k}(E, \d E) \cong H^{p+k}_c(E - \d E)$ and $\int_F \beta \in H^p_c(B)\cong H^p(B)$ since $B$ is compact.  
We first check that integration along the fiber is Poincar\'{e} dual to the map $\pi^*$ induced by $\pi\co E\to B$.  That is, we check that $\int_F \beta \cap [B]= \pi_*(\beta\cap[E])$ for any class $\beta \in H^*(E, \d E)$.

Let $\langle \cdot, \cdot \rangle$ denote the pairing between cohomology and homology.  Let $\alpha \in H^{m-p} B$ where $m=\dim B$.  
Using the relationship between cap and cup products and the ``naturality" property of cap products, we have the following:

\begin{align*}
\langle \alpha,\> \mbox{$\int_F \beta$} \cap [B]\rangle &= \langle \alpha \cup \left(\mbox{$\int_F \beta$} \right),\> [B]\rangle \\
&=\int_B \alpha \wedge \left(\mbox{$\int_F \beta$} \right) \\
&=\int_E \pi^* \alpha \wedge \beta 
\end{align*}
The equality of the last two integrals above follows from a Fubini Theorem
applied to the pairings $\int_B\co H^{m-p}B \otimes H^p_c B \to \R$ and $\int_E\co H^{m-p} (E-\d E) \otimes H^{p+k}_c(E-\d E) \to \R$.  
(See the book by Greub, Halperin, and Vanstone \cite[pages 307--309]{GHV}; see also the paper of Auer \cite{AuerPD} for a characterization of fiber integration in terms of Poincar\'{e} duality.)  
We continue:
\begin{align*}
\langle \alpha,\> \mbox{$\int_F \beta$} \cap [B]\rangle &=\int_E \pi^* \alpha \wedge \beta \\
&=\langle \pi^* \alpha \cup \beta,\> [E, \d E]\rangle \\
&=\langle \pi^*\alpha,\> \beta \cap [E, \d E]\rangle \\
&=\langle \alpha,\> \pi_*(\beta \cap [E, \d E])\rangle
\end{align*}
Since this holds for any $\alpha\in H^{m-p}B$, we must have $\int_F \beta \cap [B]= \pi_*(\beta\cap[E])$.  Thus the following diagram commutes:
\begin{equation}
\label{Integration=PD}
\xymatrix{
H^{p+k}(E) \ar[r]^{- \cap[E, \d E]} \ar[d]_{\int_F} & H_{m-p}(E) \ar[d]_{\pi_*} \\
H^{p}(B) & H_{m-p} (B) \ar[l]_{( - \cap [B])^{-1}}
}
\end{equation}

Next we claim the diagram below commutes:
\[
\xymatrix{
H^{p+k}(E, \d E) \ar[rrr]^{- \cap[E, \d E]}_\cong \ar[d]_{\mbox{relative Thom } \cong} & & & H_{m-p}(E) \ar[d]_{\cong} \\
H^{p+N+M}(D(\nu), S(\nu) \cup D(\nu|_{\d E}))\ar[rrr]^-{- \cap [D(\nu), S(\nu) \cup D(\nu|_{\d E}) ]}_\cong & & & H_{m-p}(D(\nu))\ar[d]^{i_*} \\
H^{p+N+M}(\R^{\NN, M}\x B, \R^{\NN, M}\x B - (\eta(E)\cup \d E)) \ar[rrr]^-{-\cap i_*[D(\nu), S(\nu) \cup D(\nu|_{\d E})]} \ar[u]^{i^*}_\cong \ar[d]_{\tau^*} & & & H_{m-p}(\R^{\NN, M} \x B) \\
H^{p+N+M}(D^{\NN, M} \x B, \d D^{\NN, M} \x B) \ar[rrr]^-{-\cap[D^{\NN, M}\x B, \d D^{\NN, M}\x B]}_\cong \ar[d]_{\Sigma \>\cong} & & & H_{m-p}(D^{\NN, M}\x B) \ar[u]_\cong \\
H^p(B)  \ar[rrr]^{ - \cap [B]}_\cong &  & & H_{m-p} (B) \ar[u]_\cong
}
\]
Commutativity of the first square at the top is the expression of the \emph{relative} Thom class as Poincar\'{e} dual to $[E,\d E]$, ie, the relative zero section.  (In fact, one can take this as the definition of the (relative) Thom class.)  
The second square commutes by the ``naturality" of the cap product, applied to 
\[
i\co (D(\nu), S(\nu) \cup D(\nu|_{\d E})) \to (\R^{\NN,M} \x B,  \R^{\NN,M} \x B - (\eta(E)\cup \d E)). 
\]
The map $i^*$ is an isomorphism by excision. 
In the third square, $D^{\NN,M}$ is a subset of $\R^{\NN,M}$ large enough to contain the image of the embedding $e$.  
The commutativity of this square comes from the ``naturality" of the cap product applied to the collapse map 
\[
\tau\co (D^{\NN,M} \x B, \d D^{\NN,M} \x B)\to(\R^{\NN,M} \x B, \R^{\NN,M} \x B - (\eta(E)\cup \d E)).
\]  
The bottom square commutes up to a sign by the relationship between cap and cross products.  By choosing the suspension isomorphism appropriately, this diagram will commute on the nose.

Finally notice that $(-\cap[B])^{-1} \circ \pi_* \circ (-\cap [E, \d E])$ is the same as going clockwise around the above diagram, from the upper-left corner to the lower-left corner.  The composition induced by the Pontrjagin--Thom collapse map corresponds to going down the left-hand side of the diagram.
Thus we have proven the lemma.
\end{proof}

\section{The triple linking number via Pontrjagin--Thom}
\label{MilnorViaPT}
In this Section, we finish the proof of Theorem 2.  We do this via neat embeddings, ie, embeddings which respect corner structure.  The reader may wish to refer back either to our brief discussion just before Corollary \ref{GluingCor} or to the references mentioned there.  We prove Theorem by first neatly embedding our glued bundle $E_g \to \L_3$ into a trivial bundle.

\subsection{Neatly embedding the glued total space}

\begin{lemma}
\label{GluedEmbedsInTrivial}
The manifold with faces $E_g$ can be embedded into a trivial bundle over $\L_3$ in such a way that it has a well defined tubular neighborhood and normal bundle.
\end{lemma}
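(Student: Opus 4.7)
The plan is to construct the neat embedding explicitly, exploiting the description of $E_g$ as a fiberwise gluing of four compactified configuration space bundles. For each fiber $F_g$ over a link $L$, the pieces $F_T$ and $F_D \x S^2$ (for $D = L, M, R$) sit inside products of copies of $\R^3$ (for the positions of the configuration points) and of $S^2$ (for the extra $S^2$ factor in the $F_D \x S^2$ pieces and for the screen data introduced by blowing up diagonals and the point at infinity). I would first fix a diffeomorphism of $\R^3$ onto a bounded open ball; this yields \emph{bounded} evaluation coordinates for each configuration point. Combined with the $S^2$-valued screen coordinates and the auxiliary $S^2$ factors, these assemble into an evaluation-type map from each piece into a bounded subset of some $\R^M$.

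Next, for each codimension--1 face of $F_g$ that is \emph{not} one of the three glued principal faces, I would introduce a ``distance--to--face'' coordinate valued in $[0,R]$, using the manifold--with--faces structure (either from a chosen collar or from a smoothed face--defining function). Adjoining these $N$ extra coordinates promotes the map to a neat embedding of $F_g$ into $\R^M \x [0,R]^N$, whose image lies in a compact, codimension--0 subset $B_{M,N}(R)$ of a ball of radius $R$ in $\R^{M+N}$. Neatness (perpendicular intersection of $F_g$ with each boundary stratum of $[0,R]^N$) follows directly from the fact that the $[0,\infty)$-coordinates are face--distance functions. Since the evaluation and screen coordinates depend smoothly on $L$ (only through smooth operations on the configuration relative to the link), the fiberwise construction assembles into a smoothly--varying embedding $E_g \incl B_{M,N}(R) \x \L_3$.

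The main obstacle I expect is compatibility along the three glued faces $\SS_1, \SS_2, \SS_3$. To resolve this, I would choose evaluation and screen coordinates on each common face $F(\SS_i) \cong F(\SS_D) \x S^2$ once and then extend them consistently into the interior of both glued pieces; moreover, \emph{no} distance--to--face coordinate is introduced for these glued faces, since they become interior to the glued fiber $F_g$. With these choices the four partial embeddings agree on the glued faces and piece together into a single neat embedding of $E_g$. Existence of a tubular neighborhood diffeomorphic to the normal bundle is then the standard consequence of the theory of neat embeddings of manifolds with faces (Laures \cite[\S2.1]{Laures}; see also our previous work \cite[\S3.1]{Rbo}), which supplies exactly the structure needed for the Pontrjagin--Thom construction in Theorem \ref{PTThm}.
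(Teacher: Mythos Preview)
Your approach is genuinely different from the paper's, and the place where it is underspecified is exactly the hard part of the lemma.

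You aim to embed $F_g$ directly into a standard $\R^M \times [0,R]^N$ by assembling evaluation, screen, and distance-to-face coordinates.  The paper instead embeds each piece $E_D$ into its \emph{own} copy of $\R^M \times [0,\infty)^N \times \L_3$, and then glues those four ambient half-spaces along codimension--1 faces; the resulting target $B_{M,N}(R)$ is \emph{not} a manifold with corners, and a separate argument (using that the three principal faces $\SS_1,\SS_2,\SS_3$ are disjoint in $E_T$) is needed to show the tubular neighborhood still makes sense.  More importantly, the paper does not simply ``choose coordinates on the common face and extend consistently'': it fixes the embedding of $E_T$, \emph{doubles} it across the glued face to obtain an embedding of a collar $\eta(\SS_L)\times S^2$, observes that the normal bundle of $E_L$ in its ambient half-space is trivial (so a tube is $E_L \times \R^p$), and then uses an \emph{isotopy} of $S^2 \hookrightarrow \R^p$ along the collar to interpolate between the doubled embedding near the face and a standard embedding of $E_L \times S^2$ away from it.

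This isotopy step is precisely what your phrase ``extend them consistently into the interior of both glued pieces'' hides.  The natural evaluation coordinates on the four pieces do not patch to a single map on $F_g$: on $F_T$ the fourth point $x_4$ ranges freely in $\R^3$ (so contributes three coordinates), while on $F_L\times S^2$ the corresponding point is constrained to strand~1 and the extra degrees of freedom live in the auxiliary $S^2$ factor.  At the glued face these match, but away from it they live in different coordinate slots, and there is no canonical way to continue one set of coordinates across into the other piece without an interpolation argument of the type the paper supplies.  (A symptom: if you try to write down a single map $F_g \to (\R^3)^4 \times (S^2)^3$ from your description, you will find that the $(\R^3)^4$ part is not well defined across the glue, only the $(S^2)^3$ part --- which is just $\Phi$ --- is.)

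So your plan is reasonable at the level of strategy, and your target $\R^M \times [0,R]^N$ is arguably cleaner than the paper's glued ambient space; but to carry it out you will still need an explicit mechanism (a collar-supported isotopy, or a partition-of-unity patching of two embeddings into a larger $\R^M$) to reconcile the piecewise coordinates across each glued face.  Appealing to Laures for a single fiber is fine, but you also need the embedding to vary smoothly over $\L_3$, and your sketch does not establish that the abstract Laures embedding can be chosen to depend smoothly on the link.
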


\begin{proof}
Each of the four spaces $E_D$ (for the four diagrams $D=L,M,R,T$) can be neatly embedded 
\[
\xymatrix{ E_D \ar@{^(->}[r] & C_4[\R^3] \x \L_3}
\] 
using its definition as the pullback in square (\ref{BTsquare}) and the fact that all the corner structure in $E_D$ comes from that in $C_4[\R^3]$.  Lemma 3.2.1 of our paper \cite{Rbo} shows that the spaces $C_n[\R^d]$ can be given the structure of $\langle N \rangle$--manifolds.  So by Laures' \cite[Proposition 2.1.7]{Laures}, we can find neat embeddings over $\L_3$:
\begin{align}
\label{FirstEmbeddings}
\xymatrix{
E_T \ar@{^(->}[r]  & C_4[\R^3] \x \L_3 \ar@{^(->}[r]  & \R^M \x [0, \infty)^N \x \L_3  \\
E_L \ar@{^(->}[r]  & C_4[\R^3] \x \L_3 \ar@{^(->}[r]  & \R^M \x [0, \infty)^N \x \L_3 }
\end{align}
We reverse the orientation on the top copy of $ \R^M \x [0, \infty)^N$ and then glue the top copy of $\R^M \x [0, \infty)^N \x \L_3$ to the bottom one in such a way that the principal face of the top $C_4[\R^3]$ containing $\SS_1$ is glued via the identity map to the principal face of the bottom $C_4[\R^3]$ containing $\SS_L$.  (So far $E_T$ has not been glued to $E_L$ in any ``nice" way.)

Let $\eta(\SS_L)\cong \SS_L \x [0,1)$ denote a collar neighborhood of $\SS_L$ in $E_L$.  We have the identification $\xymatrix{g\co \SS_L \x S^2 \ar[r]^-{\cong} & \SS_1}$, and we can further identify $\eta(\SS_L) \x S^2$ with a collar $\eta(\SS_1)$ of $\SS_1$ in $E_T$.  Consider the restriction of (\ref{FirstEmbeddings}) to $\eta(\SS_1)$ and the fact that these neat embeddings are perpendicular at the boundaries of $\R^M \x [0,\infty)^N$; this allows us to smoothly extend the bottom embedding to $E_L \cup (\eta (\SS_L) \x S^2)$ by ``doubling" it near the boundary:
\begin{equation}
\label{EmbJustCollar}
\xymatrix{
E_T \cup_g (E_L \cup (\eta (\SS_L) \x S^2))  \ar@{^(->}[r] & (C_4[\R^3] \cup C_4[\R^3]) \x \L_3 \ar@{^(->}[r] & (\R^M \x [0, \infty)^N \cup \R^M \x [0, \infty)^N) \x \L_3 }
\end{equation}
Again, these are embeddings over $\L_3$.

Since the interior of the fiber $F_L$ is an open subset of Euclidean space, $F_L$ has a trivial normal bundle.  The normal bundle to $E_L \incl \R^M \x [0, \infty)^N \x \L_3$ is the normal bundle along the fiber.
Thus a tubular neighborhood of $E_L$ in $\R^M \x [0, \infty)^N \x \L_3$ can be identified with $E_L \x \R^p$ for some $p$.  (Considering the dimensions of $F_L$ and $C_4[\R^3]$, we see that $p\geq 8$.)

Consider the restriction 
\[
\xymatrix{\eta(\SS_L) \x S^2  \ar@{^(->}[r] & \R^M \x [0, \infty)^N \x \L_3 }
\]
of the embedding (\ref{EmbJustCollar}) to a collar of the face along which we glued.   We may assume that its image lies in a tubular neighborhood of $E_L$:  in fact, we can shrink the embedding of the $S^2$ factor in 
 $\eta(\SS_1) \cup_g(\eta(\SS_L) \x S^2)  \cong \SS_L \x S^2 \x (-1,1)$ along the interval $(-1,0]$ to achieve this.  Thus over each link in $\L_3$ we have an embedding 
\[
\xymatrix{F(\SS_L) \x S^2  \ar@{^(->}[r] & F(\SS_L) \x \R^p.}
\]
Let $F(\SS_L)$ denote the fiber of the face $\SS_L$ over a link in $\L_3$.  By making $p$ sufficiently large (by making $M$ above sufficiently large), we can find a smooth isotopy from the embedding above to an embedding
\[
\xymatrix{F(\SS_L) \x S^2  \ar@{^(->}[r] & F(\SS_L) \x \R^p}
\]
induced by a standard embedding $S^2 \incl \R^p$.  If we take the trace of this isotopy along part of the collar (say, along $[\frac{1}{3},\frac{2}{3}]$), we can extend outside $F(\SS_L) \x S^2 \x [0, \frac{2}{3}]$ using the standard embedding.  This gives an embedding $F_L \x S^2 \incl F_L \x \R^p$ which on the face $F(\SS_L)$ agrees with the one gotten by restricting (\ref{EmbJustCollar}).  This gives an embedding of the whole space 
\[
\xymatrix{E_L \x S^2  \ar@{^(->}[r] & E_L \x \R^p  \ar@{^(->}[r] & \R^M \x [0, \infty)^N \x \L_3 }
\]
which on $E_L \cup(\eta(\SS_L) \x S^2)$ agrees with (\ref{EmbJustCollar}).  Thus we have embedded 
\[
\xymatrix{E_T \cup_g (E_L \x S^2)  \ar@{^(->}[r] &  \left((\R^M \x [0, \infty)^N) \cup (\R^M \x [0, \infty)^N)\right) \x \L_3.} 
\]
Our construction of the embedding in (\ref{EmbJustCollar}) gives a smooth embedding of the collar of the face we glued, where the manifold-with-faces structure on the glued space is the one indicated in the proof of Lemma \ref{GluedAreMfdsWFaces}.  Furthermore, the embedding of each piece is neat and hence has a tubular neighborhood which can be identified with its normal bundle.  So the embedding above has a tubular neighborhood diffeomorphic to its normal bundle.

Continuing in a similar manner, we embed 
\begin{equation}
\label{FinalEmb}
\xymatrix{E_g  \ar@{^(->}[r] & \left(\bigcup_1^4\>  \R^M \x [0, \infty)^N\right) \x \L_3}
\end{equation}
where each of the last 3 copies of $\R^M \x [0, \infty)^N$ is glued to the first copy along a different codimension--1 face of the first copy.  The result is a subspace of $\R^M \x \R^N$.   We saw that the domain is a manifold with faces, but the codomain is not (necessarily) even a manifold with corners, since the three pairs of faces we glue along are not disjoint there.  Nonetheless, as in Lemma \ref{PrincipalFacesDisjoint}, the faces of the $E_D$ which we glue are disjoint.  Thus near any point in any $E_D$, the embedding looks the same as in the case where we glue just one pair of spaces.  Thus, as in that case, the embedding has a well defined tubular neighborhood, diffeomorphic to its normal bundle. 
\end{proof}

\subsection{Finishing the proof of Theorem 2}

Proposition \ref{Mu123IsIntegralOnGluedSpace} described the triple linking number $\mu_{123}$ as $\int_{F_g} \beta$, the integral over the fiber of a glued configuration space bundle of a certain 6--form $\beta$.  We now compare this to the Pontrjagin--Thom construction.  

Let $b_{M,N}(R)$ denote the intersection of a radius $R$ ball around the origin in $\R^{M+N}$ with $\R^M \x [0, \infty)^N$.  Let $B_{M,N}(R)$ denote the result of gluing four copies of this sector $b_{M,N}(R)$ in the same way that the four copies of $\R^M \x [0, \infty)^N$ were glued.  The embedding (\ref{FinalEmb}) gives rise to a Pontrjagin--Thom collapse map, which induces a map of pairs
\[
\xymatrix{(B_{M,N}(R) \x \L_3, \ast)  \ar[r] & (B_{M,N}(R) \x \L_3, \> B_{M,N}(R) \x \L_3 - \eta(E_g) )}
\]
for sufficiently large $R$, where $\eta(E_g)$ is a tubular neighborhood of $E_g$ and where the basepoint $\ast$ is a point of distance $R$ from the origin in $B_{M,N}(R)$.  The collapse map further descends to the map of pairs below
\[
\tau\co (B_{M,N}(R) \x \L_3,\, \d(B_{M,N}(R)) \x \L_3) \xymatrix{ \ar[r] & } (B_{M,N}(R) \x \L_3, \, (B_{M,N}(R) \x \L_3 - \eta(E_g)) \, \cup \, \d E_g )
\]
which on the level of quotients is
\[
\xymatrix{\tau\co \Sigma^{M+N} \L_3 \ar[r] & E_g^{\nu_{M,N}} / \d E_g^{\nu_{M,N}}}
\]
where $E_g^{\nu_{M,N}}$ is the Thom space of the normal bundle $\nu_{M,N} \to E_g$ of our embedding of $E_g$ in a trivial bundle.

We have already seen that, by our choices of spherical forms and vanishing arguments for infinite faces, the form $\beta$ represents a class $[\beta] \in H^6(E_g, \d E_g)$.  We claim that by Lemma \ref{IntegrationIsPTBoundary}, the image $\tau^*[\beta]$ 
of $[\beta]$ under the map in cohomology induced by the collapse map above 
is precisely $\int_{F_g} \beta = \mu_{123}$.  Strictly, Lemma \ref{IntegrationIsPTBoundary} only shows this when the base is a finite-dimensional compact manifold.  However, we can consider the smooth structure on $\L_3$ as being defined by declaring (in an obvious way) what the smooth maps from finite-dimensional manifolds are.  Then Lemma \ref{IntegrationIsPTBoundary} applies to any compact submanifold of $\L_3$.  
Since $\tau^*\beta$ and $\int_{F_g} \beta$ agree over any compact submanifold, they are indeed equal.  
This proves Theorem 2.
\qed

    \bibliographystyle{alpha}
    \bibliography{refs}

\end{document}